\documentclass[11pt]{amsart}
\usepackage{amsmath,amstext,amsbsy,amssymb,amscd}
\usepackage{stmaryrd}
\usepackage{amsmath,mathrsfs}
\usepackage{amsxtra}
\usepackage{amscd}
\usepackage{amsthm}
\usepackage{amsfonts}
\usepackage{hyperref}
\usepackage{graphicx}%
\usepackage{amssymb}
\usepackage{eucal}
\usepackage{color}
\usepackage{multirow}
\usepackage[enableskew,vcentermath]{youngtab}
\hypersetup{colorlinks,linkcolor=blue,urlcolor=cyan,citecolor=blue}

\textwidth=14.9cm \oddsidemargin=1cm \evensidemargin=1cm

\newtheorem{thm}{Theorem}[section]
\theoremstyle{plain}

\newtheorem{lem}[thm]{Lemma}
\newtheorem{prop}[thm]{Proposition}
\newtheorem{cor}[thm]{Corollary}

\theoremstyle{definition}

\newtheorem{example}[thm]{Example}

\theoremstyle{remark}
\newtheorem{rem}[thm]{Remark}
\newtheorem{conjecture}[thm]{Conjecture}

\definecolor{A}{rgb}{.75,1,.75}

\numberwithin{equation}{section}

\newcommand{\diag}{\text{diag}\,}

\newcommand{\Fq}{{\mathbb F}_q}

\newcommand{\GG}{\mathscr{G}}
\newcommand{\GL}{GL_n(q)}
\newcommand{\qbinom}[2]{\begin{bmatrix} #1\\#2 \end{bmatrix} }

\newcommand{\ds}{\displaystyle}

\newcommand{\C}{\mathbb C}
\newcommand{\Z}{\mathbb Z}
\newcommand{\N}{\mathbb N}


\newcommand{\al}{{\alpha}}

\newcommand{\mP}{\mathcal{P}}
\newcommand{\mA}{\mathcal{A}}

\newcommand{\la}{\lambda}
\newcommand{\bla}{{\boldsymbol{ \lambda}}}
\newcommand{\bmu}{{\boldsymbol{ \mu}}}
\newcommand{\bnu}{{\boldsymbol{ \nu}}}
\newcommand{\obla}{{\overline{\bla}}}
\newcommand{\obmu}{{\overline{\bmu}}}
\newcommand{\obmun}{{\bmu^{\uparrow n}}}
\newcommand{\obmuk}{{\bmu^{\uparrow k}}}
\newcommand{\obnu}{{\overline{\bnu}}}
\newcommand{\obnun}{{\bnu}^{\uparrow n}}

\newcommand{\LL}{{\bf q}}

\newcommand{\bg}{{\bar{g}}}
\newcommand{\bh}{{\bar{h}}}
\newcommand{\ZZ}{{\mathscr Z}}

{\vskip-\lastskip\medskip
  \noindent
  {\em #1.}\enspace
  }%
{\qed\par\medskip
  }

\begin{document}

\title[Stability of the centers of group algebras of $GL_n(q)$]{Stability of the centers of group algebras of $GL_n(q)$}
\author[Jinkui Wan and Weiqiang Wang]{Jinkui Wan and Weiqiang Wang}

\address{
(Wan) School of Mathematics, Beijing Institute of Technology, Beijing, 100081, P.R. China.}
\email{wjk302@hotmail.com}

\address{(Wang) Department of Mathematics, University of Virginia, Charlottesville,VA 22904, USA.}
\email{ww9c@virginia.edu}

\keywords{Finite fields, general linear groups, centers, conjugacy classes}

\subjclass[2000]{Primary: 20G40, 05E15}

\begin{abstract}
The center $\ZZ_n(q)$ of the integral group algebra of the general linear group $GL_n(q)$ over a finite field admits a filtration with respect to the reflection length. We show that the structure constants of the associated graded algebras $\mathscr{G}_n(q)$ are independent of $n$, and this stability leads to a universal stable center with positive integer structure constants which governs the algebras $\mathscr{G}_n(q)$  for all $n$. Various structure constants of the stable center are computed and several conjectures are formulated. Analogous stability properties for symmetric groups and wreath products were established earlier by Farahat-Higman and the second author.
\end{abstract}

\maketitle

\setcounter{tocdepth}{1}
 \tableofcontents

\section{Introduction}

\subsection{}

A remarkable stability for the centers of the integral group algebras $\Z[S_n]$ of the symmetric groups $S_n$ as $n$ varies was established by Farahat and Higman \cite{FH59}. This stability result can be reformulated conceptually as follows \cite{Ma95}. Define a notion of reflection length and modified type for permutations in $S_n$, so the length of a permutation is conjugation invariant and it is equal to the size of its modified type. The reflection length endows the center of $\Z[S_n]$ a filtered algebra structure; the stability result of Farahat-Higman states that the structure constants in the associated graded algebras  of the centers with respect to the basis of conjugacy class sums are independent of $n$. 
This stability result has led to a universal stable (Farahat-Higman) ring with a distinguished basis, which can be further identified with the ring of symmetric functions with a new basis \cite[pp.131-134]{Ma95}.

The above stability result has been generalized by the second author \cite{W04} to wreath products $\Gamma\wr S_n$ for any finite group $\Gamma$. 
When the group $\Gamma$ is a finite subgroup of $SL_2(\C)$, the associated graded algebra of the center of the group algebra of the wreath product is isomorphic to the cohomology ring of Hilbert scheme of $n$ points on the minimal resolution of $\C^2/\Gamma$; see \cite{W04}. (In case when $\Gamma$ is trivial, this goes back to \cite{LS01, Va01}.) The same type of stability results has been established in \cite{LQW04} for cohomology ring of Hilbert scheme of $n$ points on a large class of quasi-projective surfaces (conjecturally, on any non-projective surface).

\subsection{}

The general linear groups $GL_n(q)$ over a finite field $\Fq$ form another rich and sophisticated family of finite groups, which are often studied besides symmetric groups and wreath products; cf. \cite{Ma95, Ze81}. The main goal of this paper is to formulate and establish a stability result \`a la Farahat-Higman for the centers of the integral group algebras of $GL_n(q)$.

\subsection{}

An element in $\GL$ is called a reflection in this paper if its fixed point subspace in $\Fq^n$ has codimension one. The set of reflections in $\GL$ forms a generating set for $\GL$, and the reflection length of a general element $g\in\GL$ is by definition the length of any reduced word of $g$ in terms of reflections; two conjugate elements in $\GL$ have the same reflection length. The center $\ZZ_n(q)$ of the integral group algebra $\Z[\GL]$ of $\GL$ is a filtered algebra with a basis of conjugacy class sums with respect to the reflection length. Denote by $\mathscr{G}_n(q)$ the associated graded algebra.

Denote by $\Phi$ the set of monic irreducible polynomials in $\Fq[t]$ other than $t$. It is well known (cf. \cite{Ma95}) that the conjugacy classes of $\GL$ are parametrized by the types  $\bla =(\bla(f))_{f\in\Phi}\in \mP_n(\Phi)$ (which are the partition-valued functions on $\Phi$ of degree $n$; cf. \eqref{eq:deg-la}). We define a notion of modified types as follows. Let $g$ be an element of $GL_n(q)$ of type $\bla \in\mP_n(\Phi)$. Denote by $\bla^e =\bla(t-1)$ the partition of the unipotent Jordan blocks, and denote by $r=\ell(\bla^e)$ its length. We define the {\em modified type } of $g$ to be $\mathring{\bla}\in\mP_{n-r}(\Phi)$, where $\mathring{\bla}(f)=\bla(f)$ for $f\neq t-1$ and $\mathring{\bla}(t-1)=(\bla^e_1-1,\bla^e_2-1,\ldots,\bla^e_r-1)$.
This modified type remains unchanged for $g$ under the embedding of $GL_n(q)$ into $GL_{n+1}(q)$ and it is also clearly conjugation invariant. It follows that the conjugacy classes of $GL_\infty(q)$ are parametrized by the modified types in $\mP(\Phi) =\cup_n \mP_n(\Phi)$.

As observed in \cite{HLR17}, a basic property about the reflection length of $g\in \GL$ is that it coincides with the codimension of its fixed point subspace in $\Fq^n$. We show that the reflection length of an element $g\in \GL$ is equal to the size of its modified type.

We parametrize the conjugacy classes (and class sums) for $\GL$ via the modified types $\bla$, and denote the conjugacy classes by $\mathscr K_\bla (n)$ and the corresponding class sums by $K_\bla(n)$, for $\|\bla \| + \ell(\bla^e) \le n$.
We then write the multiplication in the center $\ZZ_n(q)$ as
\begin{equation}
  \label{eq:a(n)}
K_\bla(n) K_\bmu(n)=\sum_{\bnu:\; \|\bnu\|\leq \|\bla\|+\|\bmu\|} a^\bnu_{\bla\bmu}(n)K_\bnu(n).
\end{equation}

We can now state our first main result of this paper.
\begin{thm} [Theorem~\ref{thm:indep}]\label{thm:intro}
Let $\bla,\bmu,\bnu\in\mP(\Phi)$.
If $\|\bnu\|= \|\bla\|+\|\bmu\|$,
then $a^\bnu_{\bla\bmu}(n)$ is independent of $n$. 
{\em (In this case, we shall write $a^\bnu_{\bla\bmu}(n)$ as $a^\bnu_{\bla\bmu} \in \N$.)}
\end{thm}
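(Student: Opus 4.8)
The plan is to interpret the structure constant $a^\bnu_{\bla\bmu}(n)$ group-theoretically and then to show, in the top-degree case $\|\bnu\|=\|\bla\|+\|\bmu\|$, that every factorization realizing it actually takes place inside the subgroup $GL_n(q)\hookrightarrow GL_{n+1}(q)$, so that the relevant factorizations at levels $n$ and $n+1$ are in bijection. Fix an element $z$ of modified type $\bnu$. By the standard interpretation of class-sum multiplication, $a^\bnu_{\bla\bmu}(n)$ equals the number of pairs $(x,y)$ with $x$ of modified type $\bla$, $y$ of modified type $\bmu$, and $xy=z$. Writing $\ell(g)=\codim\ker(g-1)$ for the reflection length (equal to the size of the modified type, as established above), the constraint in \eqref{eq:a(n)} reflects the subadditivity $\ell(xy)\le\ell(x)+\ell(y)$, and the top-degree hypothesis is precisely the additivity $\ell(z)=\ell(x)+\ell(y)$.

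The key linear-algebra step is to analyze the fixed and moved subspaces $\mathrm{Fix}(g)=\ker(g-1)$ and $\mathrm{mov}(g)=\im(g-1)$ in this additive case. On one hand, $\mathrm{Fix}(x)\cap\mathrm{Fix}(y)\subseteq\mathrm{Fix}(z)$, while $\codim(\mathrm{Fix}(x)\cap\mathrm{Fix}(y))\le\ell(x)+\ell(y)=\ell(z)=\codim\mathrm{Fix}(z)$; hence $\mathrm{Fix}(x)\cap\mathrm{Fix}(y)=\mathrm{Fix}(z)$. On the other hand, from the identity $z-1=(x-1)y+(y-1)$ one gets $\mathrm{mov}(z)\subseteq\mathrm{mov}(x)+\mathrm{mov}(y)$, and the dimension count $\ell(z)\le\dim(\mathrm{mov}(x)+\mathrm{mov}(y))\le\ell(x)+\ell(y)=\ell(z)$ forces $\mathrm{mov}(z)=\mathrm{mov}(x)\oplus\mathrm{mov}(y)$; in particular $\mathrm{mov}(x)\subseteq\mathrm{mov}(z)$ and $\mathrm{mov}(y)\subseteq\mathrm{mov}(z)$.

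Next I would run the stability comparison between $n$ and $n+1$. Embed $GL_n(q)$ into $GL_{n+1}(q)$ via $A\mapsto\diag(A,1)$, so that the fixed $z$ becomes $z'=\diag(z,1)$, with $\mathrm{Fix}(z')=\mathrm{Fix}(z)\oplus\langle e_{n+1}\rangle$ and $\mathrm{mov}(z')=\mathrm{mov}(z)\subseteq\Fq^n$, and with the same modified type $\bnu$. Given any top-degree factorization $z'=xy$ in $GL_{n+1}(q)$, the two facts above show that $x$ (and likewise $y$) fixes $e_{n+1}$, since $e_{n+1}\in\mathrm{Fix}(z')=\mathrm{Fix}(x)\cap\mathrm{Fix}(y)$, and preserves $\Fq^n$, since $\mathrm{mov}(x)\subseteq\mathrm{mov}(z')\subseteq\Fq^n$ yields $(x-1)\Fq^n\subseteq\Fq^n$ and hence $x\,\Fq^n\subseteq\Fq^n$. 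Therefore $x=\diag(A,1)$ and $y=\diag(B,1)$ with $A,B\in GL_n(q)$, $AB=z$, and with modified types $\bla,\bmu$ unchanged (the extra fixed direction only adds a trivial Jordan block for the eigenvalue $1$, which is erased upon passing to the modified type). This gives a bijection between the top-degree factorizations of $z$ at level $n$ and of $z'$ at level $n+1$, proving $a^\bnu_{\bla\bmu}(n)=a^\bnu_{\bla\bmu}(n+1)$ throughout the range where all three classes are nonempty, and hence independence of $n$; positivity $a^\bnu_{\bla\bmu}\in\N$ is immediate from the counting interpretation.

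The main obstacle I anticipate is the second paragraph: establishing that additivity of reflection length simultaneously controls both $\mathrm{Fix}$ and $\mathrm{mov}$, and then recognizing that it is exactly the conjunction of these two conditions — not either one alone — that is needed in the third step, since fixing $e_{n+1}$ alone, or preserving $\Fq^n$ alone, would not force block-diagonality. A secondary point requiring care is checking that the modified type is genuinely invariant under the embedding $\diag(\,\cdot\,,1)$ and that the nonemptiness threshold on $n$ for all of $\mathscr{K}_{\bla}(n)$, $\mathscr{K}_{\bmu}(n)$, $\mathscr{K}_{\bnu}(n)$ is respected, so that the bijection is valid on the full stable range.
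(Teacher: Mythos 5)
Your proposal is correct, but it proves Theorem~\ref{thm:indep} by a genuinely different route from the paper. The paper first establishes a normal form for triples (Proposition~\ref{prop:length-eq}): any $(g,h,gh)$ with $\ell(gh)=\ell(g)+\ell(h)$ is simultaneously conjugate into $G_k$ with $k=\|\bnu\|+\ell(\bnu^e)$; it then decomposes the set of pairs $\mathcal{T}$ into finitely many $G_\infty$-orbits $\mathscr{C}_i$ and evaluates $a^\bnu_{\bla\bmu}(n)=\sum_i\big|\mathscr{C}_i\cap(G_n\times G_n)\big|\big/\big|\mathscr{K}_\bnu\cap G_n\big|$ by computing the orders of the pair-stabilizers, which requires the explicit block description of centralizers (Proposition~\ref{prop:A-pbmu}, Corollaries~\ref{cor:invertible} and \ref{cor:Kmu-n}) so that the $n$-dependent factors $|G_{n-k}|$ and the powers $q^{r(n-k)}$ cancel. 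You instead fix one target $z$ of modified type $\bnu$, read $a^\bnu_{\bla\bmu}(n)$ as a factorization count, and build a direct bijection between factorizations of $z$ in $G_n$ and of $z'=\diag(z,1)$ in $G_{n+1}$: the additivity $\ell(z')=\ell(x)+\ell(y)$ is automatic from the hypothesis by Lemma~\ref{lem:length-ineq}, it forces $\mathrm{Fix}(x)\cap\mathrm{Fix}(y)=\mathrm{Fix}(z')$ (hence $xe_{n+1}=ye_{n+1}=e_{n+1}$) and $\im(x-1)\oplus\im(y-1)=\im(z'-1)\subseteq\Fq^n$ (hence $x\Fq^n\subseteq\Fq^n$ and likewise for $y$), and together these pin down $x=\diag(A,1)$, $y=\diag(B,1)$ with $AB=z$ and unchanged modified types. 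Your image-space rigidity is essentially the column-space transpose of the row-space step $U_1\cap U_2=0$ inside the paper's proof of Proposition~\ref{prop:length-eq}, but you deploy it one dimension at a time on a target already in block form, so you need no simultaneous conjugation, no orbit decomposition, and none of the centralizer cardinality formulas --- a shorter and more elementary argument, with positivity immediate from the counting interpretation. What the paper's heavier route buys is the closed expression for $a^\bnu_{\bla\bmu}$ as a sum of ratios of centralizer orders over $G_k$, which is what makes the explicit computations of Section~\ref{sec:computation} feasible. One small correction to your final paragraph: the only threshold needed is $n\geq\|\bnu\|+\ell(\bnu^e)$, i.e.\ nonemptiness of $\mathscr{K}_\bnu(n)$; if $\mathscr{K}_\bla(n)$ or $\mathscr{K}_\bmu(n)$ were empty, your surjectivity argument itself shows the count at level $n+1$ vanishes as well, so no extra hypothesis on those two classes is required.
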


After we proved Theorem~\ref{thm:indep}, we found a paper by M\'eliot, in which the structure constants $a^\bnu_{\bla\bmu}(n)$ in \eqref{eq:a(n)} for the centers $\ZZ_n(q)$ were studied. Inspired by Kerov-Ivanov's partial permutations,  M\'eliot \cite{M14} developed a very interesting notion of partial isomorphisms for $\GL$ and used it to show that $a^\bnu_{\bla\bmu}(n)$ are polynomials in $x$ evaluated at $x=q^n$; see however Remark \ref{rem:Meliot}.
This can be viewed as an analogue of another result of Farahat-Higman for symmetric groups.
The concepts of reflection length filtration of $\ZZ_n(q)$ and modified types were not present {\em loc. cit.} however, and
the parametrization of the class sums for $\GL$ therein often uses $\bmu$ with $\bmu^e$ containing no part equal to $1$.
Our paper provides an in depth study of these structure constants complementary to \cite{M14},
focusing on arguably the more interesting and accessible ones.

A key ingredient in the proof of Theorem~\ref{thm:intro} is the existence of a normal form of triples in the following sense (see Proposition~\ref{prop:length-eq}).
Assume $\|\bnu\|=\|\bla\|+\|\bmu\|$. Any triple of elements $(g, h, gh)$ in $GL_\infty(q)$ of modified types $\bla, \bmu$  and $\bnu$ is conjugate (under the simultaneous conjugation of $GL_\infty(q)$) to some triple $(\bg, \bh, \bg\bh)$
of elements in $GL_k(q)$ with $k=\|\bnu\| +\ell( \bnu^e)$, where we regard $GL_k(q)$ naturally as a subgroup of $GL_\infty(q)$.

Theorem~\ref{thm:intro} can be rephrased as that the associated graded algebra $\mathscr{G}_n(q)$ of $\ZZ_n(q)$ has structure constants independent of $n$. We introduce a graded $\mathbb{Z}$-algebra $\mathscr{G}$ with a basis given by the symbols $K_\bla$ indexed by $\bla\in\mP(\Phi)$, and its multiplication has structure constants $a^\bnu_{\bla\bmu}$ as in the theorem above, for $\|\bnu\|= \|\bla\|+\|\bmu\|$; cf. \eqref{eq:KKaK}.

\begin{thm} [Theorem~\ref{thm:stable}]
The graded $\mathbb{Z}$-algebra $\mathscr{G}_n(q)$ has the multiplication given by
$$
K_\bla(n) K_\bmu(n)=\sum_{\|\bnu\|=\|\bla\|+\|\bmu\|}a^\bnu_{\bla\bmu}K_\bnu(n),
$$
for $\bla,\bmu\in\mP(\Phi)$. Moreover, we have a surjective homomorphism $\mathscr{G}\twoheadrightarrow\mathscr{G}_n(q)$ for each $n$, which maps $K_\bla$ to $K_\bla(n)$ for all $\bla\in\mP(\Phi)$.
\end{thm}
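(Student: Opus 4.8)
The plan is to treat the two assertions in turn: the first is essentially a restatement of Theorem~\ref{thm:indep} at the level of the associated graded, while the second requires one extra compatibility that I will extract from the normal form of Proposition~\ref{prop:length-eq}.

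First I would set up the filtration explicitly. Write $F_d\ZZ_n(q)$ for the $\Z$-span of those class sums $K_\bla(n)$ with $\|\bla\|\le d$ (over valid $\bla$, i.e.\ $\|\bla\|+\ell(\bla^e)\le n$). Since the reflection length of an element of $\GL$ equals the size $\|\bla\|$ of its modified type and is subadditive under products, the expansion \eqref{eq:a(n)} shows $F_a\cdot F_b\subseteq F_{a+b}$, so $\ZZ_n(q)$ is filtered and $\mathscr{G}_n(q)=\bigoplus_d F_d/F_{d-1}$ is its associated graded. Writing $K_\bla(n)$ also for the symbol of the class sum in degree $\|\bla\|$, the graded product is obtained by reducing \eqref{eq:a(n)} modulo $F_{\|\bla\|+\|\bmu\|-1}$, so only the top terms survive: $K_\bla(n)K_\bmu(n)=\sum_{\|\bnu\|=\|\bla\|+\|\bmu\|}a^\bnu_{\bla\bmu}(n)K_\bnu(n)$. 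Now Theorem~\ref{thm:indep} replaces each $a^\bnu_{\bla\bmu}(n)$ by the $n$-independent integer $a^\bnu_{\bla\bmu}$, which yields the first displayed formula.

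Next I would define $\phi_n\colon\mathscr{G}\to\mathscr{G}_n(q)$ on the basis by $\phi_n(K_\bla)=K_\bla(n)$ when $\bla$ is valid for $n$ and $\phi_n(K_\bla)=0$ otherwise, extended $\Z$-linearly. Surjectivity is immediate since the valid symbols $K_\bla(n)$ form a $\Z$-basis of $\mathscr{G}_n(q)$. It remains to verify $\phi_n(K_\bla K_\bmu)=\phi_n(K_\bla)\phi_n(K_\bmu)$ for all $\bla,\bmu$; equivalently, that $\ker\phi_n=\mathrm{span}\{K_\bla:\bla\text{ invalid for }n\}$ is an ideal and $\phi_n$ factors as the quotient followed by the first formula. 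When both $\bla,\bmu$ are valid this is precisely the first assertion, together with the fact that for valid $\bnu$ the graded constant $a^\bnu_{\bla\bmu}(n)$ is already at its stable value $a^\bnu_{\bla\bmu}$, while any $\bnu$ invalid for $n$ contributes $K_\bnu(n)=0$ on both sides.

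The one point requiring care, and the main obstacle, is the case where $\bla$ (say) is invalid for $n$, so the right-hand side is $0$ while the left-hand side equals $\sum_{\|\bnu\|=\|\bla\|+\|\bmu\|}a^\bnu_{\bla\bmu}K_\bnu(n)$; I must show every $\bnu$ with $a^\bnu_{\bla\bmu}\ne0$ is again invalid for $n$. Here I would invoke Proposition~\ref{prop:length-eq}: if $a^\bnu_{\bla\bmu}\neq 0$ then a triple $(\bg,\bh,\bg\bh)$ of modified types $\bla,\bmu,\bnu$ is realized inside $GL_k(q)$ with $k=\|\bnu\|+\ell(\bnu^e)$. Since $\|\bla\|+\ell(\bla^e)$ is the least dimension in which modified type $\bla$ occurs and $\bg\in GL_k(q)$ has modified type $\bla$, we get $\|\bla\|+\ell(\bla^e)\le k=\|\bnu\|+\ell(\bnu^e)$ (and similarly for $\bmu$). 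Thus $\bla$ invalid for $n$, i.e.\ $\|\bla\|+\ell(\bla^e)>n$, forces $\|\bnu\|+\ell(\bnu^e)>n$, so $\bnu$ is invalid and $K_\bnu(n)=0$; hence the left-hand side vanishes, $\ker\phi_n$ is an ideal, and $\phi_n$ is the desired surjective graded homomorphism. (Associativity of $\mathscr{G}$ itself, if not already in hand, follows by the same monotonicity: choose $n$ large enough to make all indices occurring in a given triple product valid and transport associativity from $\mathscr{G}_n(q)$.)
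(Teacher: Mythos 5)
Your proposal is correct and follows essentially the same route as the paper: the first assertion is exactly Theorem~\ref{thm:indep} read off in the associated graded, and the homomorphism property rests on the same ingredients (Theorem~\ref{thm:indep} together with the normal form of Proposition~\ref{prop:length-eq}) that the paper invokes when it states the theorem as a summary of the preceding discussion. Your explicit verification that $\bla$ invalid for $n$ forces $\|\bnu\|+\ell(\bnu^e)>n$ for every $\bnu$ with $a^\bnu_{\bla\bmu}\neq 0$ is a worthwhile spelling-out of an edge case the paper leaves implicit, and it is handled correctly via the minimality characterization $\|\bla\|+\ell(\bla^e)\le\|\bnu\|+\ell(\bnu^e)$.
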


\subsection{}
We conjecture that the stable center $\mathbb Q\otimes_{\Z} \mathscr{G}(q)$ is the polynomial algebra generated by the single cycle class sums; for a precise formulation see Conjecture~\ref{conj:generators}. Similar structure results hold for the stable centers in the settings of symmetric groups and wreath products; cf. \cite{FH59, W04}.

The computations of the structure constants  $a^\bnu_{\bla\bmu}$ are much more difficult than in the symmetric group or wreath product settings; cf. \cite{FH59, W04}. We compute various examples of these structure constants (see Theorem~\ref{thm:2reflections}, Proposition~\ref{prop:a-union}, Proposition~\ref{prop:a-union-equal}) in Section~\ref{sec:computation}.

All examples indicate
a phenomenon (which is rather striking to us)
that these structure constants $a^\bnu_{\bla\bmu}$ only depend on the configurations but not on the precise supports  of the modified types $\bla, \bmu, \bnu$; see \S\ref{subsec:discussions}(5) for a precise formulation.  The examples have also motivated several conjectures on more general structure constants in Section~\ref{sec:Conj}, where we also discuss a few open problems and further directions which arise from this work.

In particular, we ask to what extent the structure constants (not merely the stable ones) are polynomials in $q$; this is a little subtle as the indexing sets for the structure constants rely on the conjugacy classes of $GL_n(q)$ which depend on $q$. We offer a possible formulation of
{\em generic} structure constants; see Conjecture~\ref{conj:intpoly}  and \S\ref{subsec:discussions}(6).

%
\subsection{}
The paper is organized as follows.
In Section~\ref{sec:GL}, we review and set up notations for conjugacy classes and their canonical representatives of $\GL$. We introduce the notion of modified types.
In Section~\ref{sec:stable}, we formulate and establish the stability on the structure constants for the graded algebra $\GG_n(q)$ and the universal stable center $\GG(q)$.
In Section~\ref{sec:computation}, we compute various structure constants for $\GG(q)$. We formulate a few conjectures and further research directions in Section~\ref{sec:Conj}.

 \vspace{.4cm}
{\bf Acknowledgements.}
It is a pleasure to thank Xuhua He for his stimulating questions which are answered here. Various examples were computed using Sage, and we thank Arun Kannan for Sage tutorial. This project was carried out while the first author enjoyed the support and hospitality of University of Virginia and Institute of Mathematical Science, and she is partially supported by  NSFC-11571036 and China Scholarship Council 201706035027.  The second author is partially supported by the NSF grant DMS-1702254.

\section{Conjugacy classes and centralizers in $GL_n(q)$}
  \label{sec:GL}

In this section, we will review the conjugacy classes of the general linear group $GL_n(q)$ and set up notations (cf. \cite{Ma95}). We provide a description of the centralizers of representatives of these conjugacy classes. A notion of modified types is introduced and used to parametrize the conjugacy classes of $GL_n(q)$ and $GL_\infty(q)$.

\subsection{Conjugacy classes of $GL_n(q)$}

Denote by $\mP$ the set of all partitions. For $\la=(\la_1, \la_2, \ldots,)\in\mP$, we denote its size by $|\la|=\la_1+\la_2+\cdots+\la_{\ell}$, its length by $\ell(\la)$, and also denote
\[
n(\la)=\sum_{i\geq 1}(i-1)\la_i.
\]
We will also write $\la=(1^{m_1(\la)}2^{m_2(\la)} \ldots)$, where $m_i(\la)$ is the number of parts in $\la$ equal to $i$.
For two partitions $\la,\mu\in\mP$, we denote by $\la\cup\mu$ the partition whose parts are those of $\la$ and $\mu$. For a set $Y$, let $\mP(Y)$ be the set of the partition-valued functions $\boldsymbol{\la}:Y\rightarrow \mP$ such that only finitely many $\bla(y)$ are nonempty partitions. Given $\bla,\bmu\in\mP(Y)$, we define $\bla\cup\bmu\in\mP(Y)$ by letting $(\bla\cup\bmu) (y) =\bla(y) \cup\bmu(y)$ for each $y\in Y$.

Denote by $\Fq$ the finite field of $q$ elements, where $q$ is a prime power. We shall regard vectors in the $n$-dimensional vector space $\Fq^n$ as column vectors, that is, $\Fq^n=\{v=(v_1,\ldots,v_n)^\intercal|v_k\in\Fq, 1\leq k\leq n\}$
for each $n\geq 1$. Denote by $M_{n\times m}(q)$ the set of $n\times m$ matrices over the finite field $\Fq$.
The general linear group $GL_n(q)$, which consists of all invertible matrices in $M_{n\times n}(q)$, acts on $\Fq^n$ naturally via left multiplication. We  shall abbreviate $GL_n(q)$ as $G_n$.

The conjugacy classes of $G_n$ can be described as follows (cf. \cite{Ma95}). For $g,h\in G_n$, write $g\sim h$ if $g$ is conjugate to $h$. Each element $g\in G_n$ acts on the vector space $\Fq^n$ and hence defines a $\Fq[t]$-module on $\Fq^n$ such that $tv=gv$ for $v\in\Fq^n$. Denote this $\Fq[t]$-module by $V_g$. Then $g\sim h$ if and only if $V_g\cong V_h$ as $\Fq[t]$-modules. Hence the conjugacy classes of $G_n$ are in one to one correspondence with the isomorphism classes of $\Fq[t]$-modules $V$ such that $\dim V=n$ and if $tv=0$ then $v=0$ for $v\in V$. Since $\Fq[t]$ is a principal ideal domain, each $\Fq[t]$-module is isomorphic to a direct sum of cyclic modules of the form $\Fq[t]/(f)^m$, where $m\geq 1$, $f\in\Fq[t]$ is a monic irreducible polynomial and $(f)$ is the ideal generated by $f$.

Let $\Phi$ be the set of all monic irreducible polynomial in $\Fq[t]$ other than $t$. Then for each $g\in G_n$, there exists a unique $\bla =(\bla(f))_{f\in\Phi} \in\mathcal{P}(\Phi)$ such that
\begin{equation}
   \label{eq:Vg}
V_g\cong V_\bla:=\oplus_{f,i}\Fq[t]/(f)^{\bla_i(f)},
\end{equation}
where we write $\bla(f)=(\bla_1(f),\bla_2(f),\ldots)\in\mP$; moreover, we have
\begin{equation}
    \label{eq:deg-la}
\|\bla\|:=\sum_{f\in\Phi}d(f)|\bla(f)|=n,
\end{equation}
where $d(f)$ denotes the degree of the polynomial $f$. Denote by $\mP_n(\Phi)$ the set of $\bla\in\mP(\Phi)$ satisfying \eqref{eq:deg-la}.
The partition-valued function $\bla=(\bla(f))_{f\in\Phi}\in\mP(\Phi)$ is called the {\em type} of $g$. Then any two elements of $G_n$ are conjugate if and only if they have the same type, and there is a bijection between the set of conjugacy classes of $G_n$ and the set $\mP_n(\Phi)$. 

For each $f=t^d-\sum_{1\leq i\leq d}a_it^{i-1}\in\Phi$, let $J(f)$ denote the {\em companion matrix} for $f$ of the form
\[
J(f)=
  \begin{bmatrix}
   0 & 1& 0 & \cdots & 0\\
   0 & 0& 1 & \cdots & 0\\
   \vdots & \vdots& \vdots & \vdots & \vdots\\
   0 & 0& 0 & \cdots & 1\\
   a_1 & a_2& a_3 & \cdots & a_d
  \end{bmatrix},
\]
and for each integer $m\geq 1$ let
\[
J_m(f)=\begin{bmatrix}
   J(f) & I_d& 0 & \cdots & 0&0\\
   0 & J(f)& I_d & \cdots & 0&0\\
   \vdots & \vdots& \vdots & \vdots & \vdots\\
   0 & 0& 0 & \cdots & J(f) & I_d\\
   0 & 0& 0 & \cdots & 0& J(f)
  \end{bmatrix}_{dm\times dm}
\]
with $m$ diagonal blocks $J(f)$, where $I_d$ is the $d\times d$ identity matrix.
Given $\bla\in \mP(\Phi)$ with $\bla(f)=(\bla_1(f),\bla_2(f),\ldots)$,
set
\begin{equation}\label{eq:Jbla}
J_\bla=\diag\Big(J_{\bla_i(f)}(f)\Big)_{f,i},
\end{equation}
that is, $J_\bla$ is the diagonal sum of the matrices $J_{\bla_i(f)}(f)$ for all $i\geq 1$ and $f\in\Phi$.
Then an element $g\in G_n$ of type $\bla$ is conjugate to the canonical form $J_\bla$ (cf. \cite[Chapter IV, \S 2]{Ma95}).
For $f\in \Phi$, set
\[
J_\bla(f)=\diag\Big(J_{\bla_i(f)}(f)\Big)_{i\geq1}.
\]
Then by \eqref{eq:Vg}, we have
$
V_{J_\bla(f)}\cong \oplus_{i\geq 1}\Fq[t]/(f)^{\bla_i(f)}
$
as $\Fq[t]$-modules and moreover, we have $
J_\bla=\diag\big(J_\bla(f)\big)_{f\in\Phi}.$

\begin{lem}[{cf. \cite[IV, (2.5)]{Ma95}}]\label{lem:comm-Jf1f2}
Let $\bla,\bmu\in\mP(\Phi)$ and $f_1\neq f_2\in\Phi$.
Suppose $A$ is a $d(f_2)|\bmu(f_2)|\times d(f_1)|\bla(f_1)|$-matrix over $\Fq$
satisfying
$
A J_\bla(f_1)=J_\bmu(f_2)A.
$
Then $A=0$.
\end{lem}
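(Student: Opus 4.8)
The plan is to reinterpret the matrix identity module-theoretically. The source space $\Fq^{d(f_1)|\bla(f_1)|}$ carries an $\Fq[t]$-module structure in which $t$ acts by $J_\bla(f_1)$, and this is precisely $V_{J_\bla(f_1)}\cong\oplus_i\Fq[t]/(f_1)^{\bla_i(f_1)}$; likewise the target carries $V_{J_\bmu(f_2)}\cong\oplus_i\Fq[t]/(f_2)^{\bmu_i(f_2)}$. Viewing $A$ as a linear map from the source to the target, the relation $AJ_\bla(f_1)=J_\bmu(f_2)A$ says exactly that $A$ intertwines the two actions of $t$, i.e.\ $A$ is a homomorphism of $\Fq[t]$-modules. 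Thus the lemma reduces to showing there is no nonzero $\Fq[t]$-module map from a module supported at $f_1$ to one supported at $f_2$ when $f_1\ne f_2$.

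Next I would record the relevant annihilators. Setting $N=\max_i\bla_i(f_1)$, the source $V_{J_\bla(f_1)}$ is annihilated by $f_1(t)^N$, and setting $M=\max_i\bmu_i(f_2)$, the target $V_{J_\bmu(f_2)}$ is annihilated by $f_2(t)^M$; in particular $f_2(t)$ acts nilpotently on the target. This is immediate from the block decomposition, since each block $J_m(f)$ acts on $\Fq[t]/(f)^m$, on which $f(t)^m=0$.

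The crux is to prove that $f_1(t)$ acts \emph{invertibly} on the target. Since $f_1,f_2$ are distinct monic irreducibles in $\Fq[t]$, they are coprime, so B\'ezout gives $a,b\in\Fq[t]$ with $af_1+bf_2=1$. Evaluating at $t=J_\bmu(f_2)$ and using that $f_2(J_\bmu(f_2))$ is nilpotent (whence $b(J_\bmu(f_2))f_2(J_\bmu(f_2))$ is nilpotent, as these commute), the operator $a(J_\bmu(f_2))f_1(J_\bmu(f_2))=1-b(J_\bmu(f_2))f_2(J_\bmu(f_2))$ is invertible; since $f_1(J_\bmu(f_2))$ commutes with $a(J_\bmu(f_2))$, it follows that $f_1(J_\bmu(f_2))$ is invertible on the target. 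Finally, because $A$ is an $\Fq[t]$-module map and $f_1(t)^N$ kills the source, I get $f_1(J_\bmu(f_2))^N A = A\,f_1(J_\bla(f_1))^N = 0$, and invertibility of $f_1(J_\bmu(f_2))^N$ forces $A=0$.

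The main obstacle is the invertibility step; the rest is bookkeeping. The essential input is that distinct irreducibles are coprime, which forces $f_1$ to act as a unit on any module on which $f_2$ is nilpotent.
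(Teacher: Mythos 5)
Your proof is correct. The paper does not prove this lemma itself but cites Macdonald [Ma95, IV, (2.5)], and your argument --- reading the intertwining relation as an $\mathbb{F}_q[t]$-module homomorphism and using B\'ezout coprimality of the distinct irreducibles $f_1,f_2$ to make $f_1$ act invertibly on the $f_2$-primary target while a power of $f_1$ kills the source --- is precisely the standard argument underlying that citation.
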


For any partition $\la\in\mP$, define
\[
a_\la(q)=q^{|\la|+2n(\la)}\prod_{i\geq 1}\varphi_{m_i(\la)}(q^{-1}),
\]
where for $k\geq 0$, we have denoted
\[
\varphi_k(t)=(1-t)(1-t^2)\cdots(1-t^k).
\]
For a polynomial $f\in\Phi$, we set
\begin{equation}
  \label{eq:qf}
q_f=q^{d(f)}.
\end{equation}
For $\bla\in\mP_n(\Phi)$, denote by $\mathcal{A}_\bla$ the centralizer of the element $J_\bla$ in $G_n$.
It is known (cf. \cite[II, (1.6)]{Ma95}) that the centralizer of the element $J_\bla(f)$ in $GL_{d(f)|\bla(f)|}(q)$
has the order $a_{\bla(f)}(q_f)$  and hence by Lemma \ref{lem:comm-Jf1f2} the centralizer of an element $g\in G_n$ of type $\bla$
has order
\begin{equation}\label{eq:cc-ord}
|\mathcal{A}_\bla|=\prod_{f\in\Phi}a_{\bla(f)}(q_f).
\end{equation}

\subsection{The group $G_\infty$}

For $m\le n$, by the natural identification
\[
V_m =\big\{(a_1, \ldots, a_m, \underbrace{0,\ldots, 0}_{n-m})^\intercal \big| a_i \in \Fq \big\},
\]
we regard $V_m$ as a subspace of $V_n$ (denoted by $V_m \subseteq V_n$).
So we have a natural filtration of vector spaces
$
0=V_0\subset V_1\subset \cdots \subset V_{n} \subset V_{n+1} \subset \cdots.
$ 
We also denote
\[
V'_{n-m} = \big\{(\underbrace{0,\ldots, 0}_m, b_1, \ldots, b_{n-m})^\intercal \big| b_i \in \Fq \big\},
\]
another distinguished subspace of $V_n$ of dimension $n-m$.
Accordingly, via the embedding $g \mapsto  
\begin{bmatrix} g & 0\\0 & I_{n-m} \end{bmatrix}$, we regard $G_m$ as a subgroup of $G_n$. In this way we have a natural filtration of groups
\[
1= G_0\leq G_1\leq \cdots \leq G_{n} \leq G_{n+1} \leq \cdots.
\]
Then the union
$
G_\infty=\cup_{n\geq 0}G_n
$
carries a natural group structure.

\subsection{The modified type}

For $\bla\in\mP(\Phi)$, we introduce a shorthand noation
\[
\bla^e:=\bla(t-1).
\]
Let $g$ be an element of $G_n$ of type $\bla=(\bla(f))_{f\in\Phi}\in\mP_n(\Phi)$.
If we regard $g$ as an element in $G_{n+m}$ by the natural embedding $G_n\subset G_{n+m}$ for any $m\ge 1$, then the type of $g$ changes.
We define the {\em modified type } of $g$ to be $\mathring{\bla}\in\mP_{n-r}(\Phi)$, where $r=\ell(\bla^e)$
and $\mathring{\bla}(f)=\bla(f)$ for $f\neq t-1$ and $\mathring{\bla}(t-1)=(\bla^e_1-1,\bla^e_2-1,\ldots,\bla^e_r-1)$.
This modified type is the same for a given element under the embedding of $G_n$ in $G_{n+m}$. (The notion of modified types here is inspired by an analogous notion for symmetric groups (cf. \cite[p. 131]{Ma95}) and wreath products \cite[\S2.3]{W04}.)
The following is immediate.

\begin{lem}
Two elements in $G_\infty $ are conjugate if and only if they have the same modified type.
\end{lem}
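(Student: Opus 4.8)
The plan is to bootstrap from two facts already in hand: (i) two elements of $G_N$ are conjugate in $G_N$ if and only if they have the same type in $\mP_N(\Phi)$; and (ii) the modified type of an element is unchanged under the embeddings $G_n\subset G_{n+m}$, so that ``the modified type of an element of $G_\infty$'' is well defined. The common mechanism in both directions is that any finite amount of data (the two elements, and a conjugator when there is one) already lives in a single $G_N$. So for the forward implication, if $g,h\in G_\infty$ satisfy $xgx^{-1}=h$, then $g,h,x$ all lie in some $G_N$; hence $g\sim h$ in $G_N$, so by (i) they have equal type in $\mP_N(\Phi)$ and therefore equal modified type, which by (ii) is their modified type in $G_\infty$.

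For the converse, suppose $g$ and $h$ share a modified type $\mathring{\bnu}$, and pick $N$ with $g,h\in G_N$; I would compute the two types $\bla,\bmu\in\mP_N(\Phi)$ directly. For $f\neq t-1$ the embedding does not alter the $f$-component, so $\bla(f)=\mathring{\bnu}(f)=\bmu(f)$. For the unipotent component, embedding into $G_N$ appends size-one Jordan blocks, giving $\bla^e=\rho^{+}\cup(1^{c})$, where $\rho^{+}$ is $\mathring{\bnu}(t-1)$ with every part increased by $1$ and $c$ is the number of trailing $1$'s; the same shape holds for $\bmu^e$. It then remains to confirm that these two counts of trailing $1$'s agree, after which $\bla=\bmu$ and hence $g\sim h$ in $G_N\subseteq G_\infty$ by (i).

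That last point is the only step that genuinely needs care, and the one I would write out in full. The count is forced by the size constraint $\|\bla\|=N=\|\bmu\|$: a short bookkeeping shows it equals $N-\|\mathring{\bnu}\|-\ell\big(\mathring{\bnu}(t-1)\big)$, in which the original dimension of $g$ (resp.\ $h$) has cancelled against its number of size-one unipotent parts, leaving only $N$ and the common $\mathring{\bnu}$. This cancellation is exactly where the definition of the modified type does its work, since by construction it records the non-unipotent components and the parts $\ge 2$ of the unipotent partition but deliberately discards the size-one blocks that the embeddings create and destroy. Everything else is a formal appeal to (i) and (ii), which is why the statement is essentially immediate.
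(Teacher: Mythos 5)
Your proof is correct and matches the paper's intent: the paper declares the lemma ``immediate'' from the definitions, and your reconstruction of the type from the modified type together with $N$ is precisely the paper's $\bmu^{\uparrow N}$ construction in \eqref{eq:obmu}--\eqref{eq:bmuarr}, whose count $N-\ell(\bmu^e)-\|\bmu\|$ of appended $1$-parts agrees with your formula $N-\|\mathring{\bnu}\|-\ell\big(\mathring{\bnu}(t-1)\big)$. You have simply written out in full the bookkeeping the paper leaves tacit, so there is nothing to correct.
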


Given $\bmu \in\mP(\Phi)$ with $r=\ell(\bmu^e)$ and $\bmu^e=(\bmu^e_1,\bmu^e_2,\ldots,\bmu^e_r)$,
we define $\bmu^{\uparrow n} \in \mP_n(\Phi)$ for all $n\geq \|\bmu\|+r$
via
\begin{align}
\label{eq:obmu}
\bmu^{\uparrow n}(f)&=\bmu(f), \; {\rm for }\, f\neq t-1,
\\
  \label{eq:bmuarr}
  \bmu^{\uparrow n}(t-1)&=(\bmu^{\uparrow n})^{e} =(\bmu^e_1+1,\bmu^e_2+1,\ldots,\bmu^e_r+1,\underbrace{1,\ldots,1}_{n-r-\|\bmu\|}).
\end{align}
Clearly elements of type $\bmu^{\uparrow n}$ in $G_n$ have a modified type $\bmu$.

Given $\bmu\in\mP(\Phi)$, we denote by $\mathscr{K}_\bmu$ the conjugacy class in $G_\infty$ which consists of elements of modified type  $\bmu$. For each $\bmu\in\mP(\Phi)$, $\mathscr K_\bmu(n):= G_n\cap\mathscr{K}_\bmu$ is nonempty if and only if $\|\bmu\|+\ell(\bmu^e)\leq n$; in this case $\mathscr K_\bmu(n)$ is a conjugacy class of $G_n$.
Let $K_\bmu(n)$ be the class sum of $\mathscr K_\bmu(n)$ if $\|\bmu\|+\ell(\bmu^e)\leq n$, and be 0 otherwise. Denote by $\ZZ_n(q)$ the center of the integral group algebra $\mathbb{Z}[G_n]$. We summarize these discussions in the following.

\begin{lem}
   \label{lem:Kbmu}
The set $\{K_\bmu(n) \neq 0 | \bmu \in \mathcal{P}(\Phi)\}$  forms the class sum $\mathbb{Z}$-basis for the center $\ZZ_n(q)$, for each $n\ge 0$.
\end{lem}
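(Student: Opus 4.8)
The plan is to combine the standard description of the center of an integral group algebra with a bijective reparametrization of conjugacy classes by modified types.

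First I would recall the elementary fact that for any finite group $G$, an element $z=\sum_{g\in G} c_g\, g \in \mathbb{Z}[G]$ lies in the center if and only if the coefficient function $g\mapsto c_g$ is constant on each conjugacy class; this follows at once by comparing $z$ with $hzh^{-1}$ for all $h\in G$. Consequently the conjugacy class sums of $G_n$, one for each conjugacy class, form a $\mathbb{Z}$-basis of $\ZZ_n(q)$. It therefore remains only to identify these class sums with the nonzero symbols $K_\bmu(n)$.

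Second, I would use the parametrization from Section~\ref{sec:GL}, by which the conjugacy classes of $G_n$ are in bijection with the types $\mP_n(\Phi)$, and show that the modified-type map $\bla\mapsto\mathring{\bla}$ restricts to a bijection
\[
\mP_n(\Phi)\;\xrightarrow{\;\sim\;}\;\{\bmu\in\mP(\Phi): \|\bmu\|+\ell(\bmu^e)\le n\},
\]
with inverse $\bmu\mapsto\bmu^{\uparrow n}$. That $\mathring{(\bmu^{\uparrow n})}=\bmu$ for every $\bmu$ in the target set is already observed in the text. For the opposite composite, I would start from $\bla\in\mP_n(\Phi)$ with $\bla^e=(\bla^e_1,\ldots,\bla^e_r)$, set $\bmu=\mathring{\bla}$, and verify $\bmu^{\uparrow n}=\bla$. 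Away from $t-1$ this is immediate, since both maps leave the components $\bla(f)$ untouched. At $t-1$ one computes $\|\mathring{\bla}\|=n-r$ and $\ell(\mathring{\bla}^e)=r'$, where $r'$ is the number of parts of $\bla^e$ exceeding $1$, so that the prescription in \eqref{eq:bmuarr} pads with exactly $n-r'-\|\mathring{\bla}\|=r-r'$ parts equal to $1$; these match precisely the $r-r'$ parts equal to $1$ in $\bla^e$, while adding $1$ back to each nonzero part $\bla^e_i-1$ reproduces the parts of $\bla^e$ exceeding $1$. Hence $\bmu^{\uparrow n}(t-1)=\bla^e$, and the two maps are mutually inverse.

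Combining the two parts, the nonzero $K_\bmu(n)$ are exactly the conjugacy class sums of $G_n$, indexed bijectively by $\mP_n(\Phi)$, and they form the desired $\mathbb{Z}$-basis of $\ZZ_n(q)$. I do not anticipate a genuine obstacle: the center-of-group-algebra statement is standard, and the crux is the purely combinatorial check that $\bla\mapsto\mathring{\bla}$ and $\bmu\mapsto\bmu^{\uparrow n}$ are mutually inverse. The only point demanding care is the behavior at $t-1$, where the length of $\mathring{\bla}^e$ may strictly drop below $\ell(\bla^e)$ once some parts of $\bla^e$ equal $1$; the bookkeeping of the padded $1$'s in \eqref{eq:bmuarr} must then be matched exactly against those unit parts, as recorded above.
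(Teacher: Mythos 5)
Your proposal is correct and matches the paper's (largely implicit) argument: the lemma there is stated as a summary of the preceding discussion, which rests on exactly the two ingredients you use — the standard fact that conjugacy class sums form a $\mathbb{Z}$-basis of the center of $\mathbb{Z}[G_n]$, and the bijection between $\mP_n(\Phi)$ and $\{\bmu\in\mP(\Phi):\|\bmu\|+\ell(\bmu^e)\le n\}$ via $\bla\mapsto\mathring{\bla}$ and $\bmu\mapsto\bmu^{\uparrow n}$. Your explicit verification that the two maps are mutually inverse, including the careful matching of the $r-r'$ padded parts equal to $1$ at $t-1$, is precisely the bookkeeping the paper leaves to the reader (it only records the easy composite $\mathring{(\bmu^{\uparrow n})}=\bmu$).
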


\subsection{The centralizers}

Recall $J_m(t-1)$ is the Jordan form of size $m$ and eigenvalue $1$. The following elementary lemma (cf. \cite[Lemma 2.1]{CW18}) can be verified by a direct computation.

\begin{lem}
  \label{lem:comm-Jkm}
Let $k,m\geq 1$. Suppose $A\in M_{m\times k}(q)$ satisfies $AJ_k(t-1) =J_m(t-1)A$. Then $A$
is of the form
\[
A=\begin{bmatrix}
 0      & \cdots  & 0      & a_1     & a_2        &\cdots     &  a_{m-1}             & a_m                 \\
 0      & \cdots  & 0      & 0       & a_1       &\cdots      & a_{m-2}              &a_{m-1}           \\
 \vdots &         & \vdots & \vdots  &            & \ddots    & \vdots               & \vdots             \\
 0      & \cdots  & 0      & 0       &  0         &\cdots     & a_1                  & a_2       \\
 0      & \cdots  & 0      & 0       & 0          &\cdots     & 0                    & a_1
\end{bmatrix} \quad \text{if } m\leq k,
\]
or
\[
 A=\begin{bmatrix}
  a_1     & a_2      &    \cdots       & a_{k-1}           &a_k                \\
   0      &a_1       & \cdots          & a_{k-2}          & a_{k-1}            \\
          &          &  \ddots         &                   &                    \\
   0      &    0     &                 &  a_1              &     a_2       \\
   0      &     0    &   \cdots        &     0             &   a_1    \\
   0      &    0     &   \cdots        &   0               &  0                  \\
 \vdots   &\vdots    &                 &\vdots             &\vdots               \\
  0       &    0     &    \cdots       &    0              &  0
\end{bmatrix} \quad \text{if } m\geq k,
\]
for some scalars $a_1,\ldots,a_{\min(k,m)}\in\Fq$.
\end{lem}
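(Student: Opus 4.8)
The plan is to reduce the commutation relation to one involving the nilpotent shift operators and then solve the resulting scalar system directly by tracking entries along diagonals. First I would write $J_k(t-1)=I_k+N_k$ and $J_m(t-1)=I_m+N_m$, where $N_k$ (resp.\ $N_m$) denotes the $k\times k$ (resp.\ $m\times m$) matrix with $1$'s on the superdiagonal and $0$'s elsewhere; since $d(t-1)=1$ and the companion matrix $J(t-1)$ is the scalar $1$, these are exactly the Jordan blocks with eigenvalue $1$. Substituting into $A J_k(t-1)=J_m(t-1)A$ and cancelling the common contribution of the identity parts, the hypothesis becomes equivalent to the single relation
\[
A N_k = N_m A.
\]

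Next, writing $A=(a_{ij})$ with $1\le i\le m$, $1\le j\le k$ and adopting the conventions $a_{i,0}=0$ and $a_{m+1,j}=0$, a direct computation gives $(A N_k)_{ij}=a_{i,j-1}$ and $(N_m A)_{ij}=a_{i+1,j}$. Hence $A N_k=N_m A$ is equivalent to the system
\[
a_{i+1,j}=a_{i,j-1}, \qquad 1\le i\le m,\ 1\le j\le k.
\]
This says precisely that $a_{ij}$ depends only on the difference $j-i$, i.e.\ that $A$ is constant along each diagonal $\{(i,j):j-i=c\}$.

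Finally I would impose the boundary conventions to pin down which diagonals vanish. Taking $j=1$ forces $a_{i+1,1}=a_{i,0}=0$ for all $i$, so every diagonal with $j-i\le -1$ contains a zero entry and therefore vanishes identically; taking $i=m$ forces $a_{m,j-1}=a_{m+1,j}=0$, so every diagonal with $j-i\le k-1-m$ vanishes as well. Thus the only possibly nonzero diagonals are those with $j-i\ge \max(0,k-m)$, and since the largest diagonal is $j-i=k-1$ there are exactly $\min(k,m)$ of them. Labelling the common value on the diagonal $j-i=\max(0,k-m)+(s-1)$ by $a_s$ for $s=1,\ldots,\min(k,m)$ then yields the two displayed matrix shapes, according to whether $k-m\ge 0$ (the $k-m$ leading zero columns case) or $k-m\le 0$ (the trailing zero rows case).

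The only obstacle is the routine bookkeeping of the index shifts and the matching of the diagonal labels $a_s$ against the two stated forms; no genuine difficulty arises, which is exactly why the statement is asserted to follow by direct computation.
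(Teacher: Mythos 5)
Your proof is correct, and it is exactly the direct computation the paper has in mind: the paper gives no written proof for this lemma, asserting only that it ``can be verified by a direct computation'' (cf.\ \cite[Lemma 2.1]{CW18}). Your reduction of $AJ_k(t-1)=J_m(t-1)A$ to $AN_k=N_mA$, the resulting diagonal-constancy relation $a_{i+1,j}=a_{i,j-1}$, and the boundary analysis killing all diagonals with $j-i<\max(0,k-m)$ (leaving precisely $\min(k,m)$ free parameters matching the two displayed shapes) are all accurate.
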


The order of the centralizer of a given element of type $\bla$ in $G_n$ is known; cf. \eqref{eq:cc-ord}. For our purpose, we need to have a more precise description of the centralizer. Recall $\bmu^{\uparrow n}$ from \eqref{eq:obmu} and \eqref {eq:bmuarr}.

\begin{prop}
   \label{prop:A-pbmu}
Let $n\geq 0, \bmu\in\mP(\Phi)$. Suppose $k=\|\bmu\|+\ell(\bmu^e)\leq n$.
Then the centralizer  of $J_{\obmun} \in G_n$ is given by
\begin{equation}
   \label{eq:centralizer}
\mathcal{A}_{\obmun}=\bigg\{
  \begin{bmatrix}
   A & B\\
   C & D
  \end{bmatrix}
\Big| A \in \mathcal{A}_{\obmuk}, D\in G_{n-k}, J_{\obmuk}B=B, CJ_{\obmuk}=C \bigg\}.
\end{equation}
\end{prop}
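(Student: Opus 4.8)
The plan is to realize $J_{\obmun}$ in the block form dictated by the embedding $G_k\subseteq G_n$, and then separate the purely combinatorial commutation relations from the one genuinely substantive point, namely the invertibility criterion. First I would note that under the natural embedding $G_k\subseteq G_n$ the canonical representative may be taken to be
\[
J_{\obmun}=\begin{bmatrix} J_{\obmuk} & 0\\ 0 & I_{n-k}\end{bmatrix},
\]
since passing from $\obmuk$ to $\obmun$ only appends $n-k$ trivial Jordan blocks $J_1(t-1)=[1]$ to the $(t-1)$-component, and these assemble into $I_{n-k}$. Writing a general element of $G_n$ in conformally partitioned block form $X=\begin{bmatrix} A & B\\ C & D\end{bmatrix}$ (with $A$ of size $k$ and $D$ of size $n-k$) and expanding $XJ_{\obmun}=J_{\obmun}X$ blockwise yields exactly the four relations $AJ_{\obmuk}=J_{\obmuk}A$, $\;J_{\obmuk}B=B$, $\;CJ_{\obmuk}=C$, and no condition on $D$. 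This computation is routine; the real content is to show that, subject to these relations, $X$ lies in $G_n$ precisely when $A\in\mathcal{A}_{\obmuk}$ and $D\in G_{n-k}$.

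The key structural input — and the reason the proposition is stated with the minimal index $k=\|\bmu\|+\ell(\bmu^e)$ — is that every $(t-1)$-Jordan block of $J_{\obmuk}$ has size $\bmu^e_i+1\ge 2$. The relation $J_{\obmuk}B=B$ says each column of $B$ lies in $\ker(J_{\obmuk}-I)$, while $CJ_{\obmuk}=C$ says $C$ annihilates $\im(J_{\obmuk}-I)$. I would read off from Lemma~\ref{lem:comm-Jf1f2} that both $B$ and $C$ are supported only on the $(t-1)$-blocks, and from Lemma~\ref{lem:comm-Jkm} (applied with source block $J_1(t-1)$) the precise sparse shape of these blocks; the upshot is that, because every such block has size $\ge 2$, one has $\ker(J_{\obmuk}-I)\subseteq\im(J_{\obmuk}-I)$, so $\im B\subseteq\im(J_{\obmuk}-I)$ while $C$ vanishes on $\im(J_{\obmuk}-I)$. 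Had any block had size $1$ (as happens for $\obmun$ itself), this would fail, which is exactly why the $n-k$ trivial summands must be absorbed into the block $D$.

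Finally I would deduce the invertibility criterion. For the "if" direction, assume $A\in\mathcal{A}_{\obmuk}$ and $D\in G_{n-k}$; by the Schur complement $X$ is invertible iff $A-BD^{-1}C$ is, and writing $A-BD^{-1}C=A(I_k-N)$ with $N=A^{-1}BD^{-1}C$, the two properties above give $\im N\subseteq\im(J_{\obmuk}-I)$ together with $N\big(\im(J_{\obmuk}-I)\big)=0$, whence $N^2=0$ and $I_k-N$ is invertible. For the "only if" direction, I would use that $X$ preserves the primary decomposition $V_{\obmun}=V^{(1)}\oplus V^{(\ne 1)}$ into its $(t-1)$-primary and prime-to-$(t-1)$ parts; on $V^{(\ne 1)}$ it acts as $A|_{V^{(\ne 1)}}$, and on the cosocle $V^{(1)}/(J_{\obmun}-I)V^{(1)}$ it induces a block lower-triangular map with diagonal blocks $\bar A$ and $D$. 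By Nakayama, invertibility of $X$ forces $\bar A$ and $D$, hence $A$ and $D$, to be invertible, so $A\in\mathcal{A}_{\obmuk}$ and $D\in G_{n-k}$.

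The main obstacle is precisely this invertibility equivalence: one must verify that arbitrary off-diagonal blocks $B,C$ (satisfying the two relations) never obstruct invertibility, and this rests entirely on the size-$\ge 2$ property of the unipotent blocks of $J_{\obmuk}$. Everything else — the block form of $J_{\obmun}$ and the commutation relations — is a direct computation.
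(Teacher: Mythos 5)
Your proposal is correct, and while it sets up the problem exactly as the paper does --- block form of $J_{\obmun}$, the relations $AJ_{\obmuk}=J_{\obmuk}A$, $J_{\obmuk}B=B$, $CJ_{\obmuk}=C$, and the observation that everything hinges on all unipotent blocks of $J_{\obmuk}$ having size $\geq 2$ --- your treatment of the crucial invertibility equivalence (the paper's Claim~1) takes a genuinely different route. The paper splits off the non-$(t-1)$ primary part via Lemma~\ref{lem:comm-Jf1f2} and then works with the explicit sparse shapes of the blocks $A_{ij}$, $B_{i1}$, $C_{1j}$ supplied by Lemma~\ref{lem:comm-Jkm}, proving by a Laplace expansion along the rows indexed by the last coordinate of each unipotent Jordan block the stronger determinant identity $\det\begin{bmatrix} A_2 & B_1\\ C_1 & D\end{bmatrix}=\det A_2\cdot\det D$. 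You instead isolate the invariant reformulation of the size-$\geq 2$ hypothesis, $\ker(J_{\obmuk}-I)\subseteq\im (J_{\obmuk}-I)$, and use it twice: for sufficiency, the Schur complement $A(I_k-N)$ with $N=A^{-1}BD^{-1}C$ satisfies $\im N\subseteq \ker(J_{\obmuk}-I)\subseteq\im (J_{\obmuk}-I)\subseteq\ker N$ (the first inclusion because $A^{-1}$ commutes with $J_{\obmuk}$, the last because $C$ kills $\im(J_{\obmuk}-I)$), so $N^2=0$ and $I_k-N$ is invertible; for necessity, the induced map on the cosocle of the $(t-1)$-primary part is block lower-triangular with diagonal blocks $\bar A$ and $D$ precisely because $\im B$ dies in the quotient, and Nakayama recovers invertibility of the unipotent part of $A$ from that of $\bar A$ (this step is sound: $\im$ of the unipotent part of $A$ is stable under $J_{\obmuk}-I$ since the two commute, so Nakayama over the local ring generated by the nilpotent part applies), while restriction to the prime-to-$(t-1)$ part handles the remaining block of $A$. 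Both arguments pivot on the same structural fact --- no size-$1$ unipotent blocks in $J_{\obmuk}$, which is exactly why the $n-k$ trivial summands must be absorbed into $D$ --- but the paper's computation buys the sharper conclusion $\det X=\det A\cdot\det D$, whereas yours buys a coordinate-free argument that needs only the kernel/image reading of the two relations (bypassing the explicit matrix forms of Lemma~\ref{lem:comm-Jkm} altogether) and makes the conceptual role of the hypothesis $k=\|\bmu\|+\ell(\bmu^e)$ transparent.
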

\begin{proof}
By \eqref{eq:Jbla}, \eqref{eq:obmu} and \eqref{eq:bmuarr}, we have
$$
J_{\obmun}=
  \begin{bmatrix}
   J_{\obmuk} & 0\\
   0 & I_{n-k}
  \end{bmatrix}.
$$
Write $P\in\mA_\obmun$ in a $(k|n-k)$-block form as
\[
P=
  \begin{bmatrix}
   A & B\\
   C & D
  \end{bmatrix}.
\]
Then we have $PJ_\obmun=J_\obmun P$ if and only if
\begin{equation}\label{eq:ABC}
AJ_\obmuk =J_\obmuk A, \quad J_\obmuk B=B, \quad CJ_\obmuk=C.
\end{equation}
Comparing with the right hand side of \eqref{eq:centralizer}, it remains to show the following.

\vspace{2mm}
\noindent {\bf Claim 1.} A matrix
$  \begin{bmatrix}
   A & B\\
   C & D
  \end{bmatrix} $
satisfying \eqref{eq:ABC} is invertible if and only if both $A$ and $D$ are invertible.

Let us reduce Claim 1 to a special case. Let
$
  \begin{bmatrix}
   A & B\\
   C & D
  \end{bmatrix}
\in G_n$
be such that \eqref{eq:ABC} holds.
By \eqref{eq:Jbla}, we can write
$$
J_\obmuk=\text{diag}\big(J_{\obmuk(f)}\big)_{f\in\Phi}=\text{diag}\Big(\text{diag}(J_\obmuk(f))_{f\neq t-1}, J_{(\obmuk)^e}\Big).
$$
Then by Lemma \ref{lem:comm-Jf1f2} and \eqref{eq:ABC}, we can write
$$
\begin{bmatrix}
   A & B\\
   C & D
\end{bmatrix}
=
\begin{bmatrix}
   A_1 & 0   & 0\\
   0   & A_2 & B_1\\
   0   & C_1   & D
  \end{bmatrix},
$$
where
$A=
  \begin{bmatrix}
   A_1 & 0   \\
   0   & A_2
  \end{bmatrix},
  B=
  \begin{bmatrix}
   0\\
   B_1
  \end{bmatrix},
   C=
  \begin{bmatrix}
   0   & C_1
  \end{bmatrix}
$
satisfy
\begin{align}
A_1\text{diag}\big(J_{\obmuk(f)}\big)_{f\neq t-1} &=\text{diag}\big(J_{\obmuk(f)}\big)_{f\neq t-1} A_1,
\notag
 \\
A_2 J_{(\obmuk)^e}=J_{(\obmuk)^e} A_2, \qquad
J_{(\obmuk)^e}B_1 &=B_1, \qquad
C_1J_{(\obmuk)^e}=C_1.
\label{eq:A2B1C1}
\end{align}
Clearly $
  \begin{bmatrix}
   A_1 & 0   & 0\\
   0   & A_2 & B_1\\
   0   & C_1   & D
  \end{bmatrix}
$ is invertible if and only if both $A_1$ and
$
  \begin{bmatrix}
  A_2 & B_1\\
   C_1   & D
  \end{bmatrix}
$
are invertible.
Thus Claim~1 is reduced to the following special case when $\bmu(f)=\emptyset$ for all $f\neq t-1$.

\vspace{2mm}
\noindent {\bf Claim 2.}
A matrix
$
M:= \begin{bmatrix}
A_2 & B_1\\
C_1   & D
  \end{bmatrix} $
  satisfying \eqref{eq:A2B1C1} is invertible if and only if both $A_2$ and $D$ are invertible.

Let us prove Claim 2. Thanks to  \eqref{eq:bmuarr}, we can write $(\obmuk)^e=(\obmu_1, \obmu_2, \cdots, \obmu_r)$, with $\obmu_r\geq 2$.
Then by \eqref{eq:A2B1C1} and Lemma \ref{lem:comm-Jkm}, we can write
\begin{equation}\label{eq:A2B1C1D}
M =
  \begin{bmatrix}
A_{11} & A_{12} &\cdots & A_{1r} & B_{11}\\
A_{21} & A_{22} & \cdots & A_{2r} & B_{21}\\
\vdots&\vdots&\vdots&\vdots&\vdots\\
A_{r1} & A_{r2} &\cdots & A_{rr}& B_{r1}\\
C_{11} & C_{12} & \cdots & C_{1r}& D
  \end{bmatrix},
\end{equation}
where $A_{ij}$ are of the form
\begin{equation}\label{eq:Aij-1}
A_{ij}=\begin{bmatrix}
 0      & \cdots  & 0      & a_1     & a_2        &\cdots     &  a_{\obmu_i-1}             & a_{\obmu_i}                 \\
 0      & \cdots  & 0      & 0       & a_1       &\cdots      & a_{\obmu_i-2}              &a_{\obmu_i-1}           \\
 \vdots &         & \vdots & \vdots  &            & \ddots    & \vdots               & \vdots             \\
 0      & \cdots  & 0      & 0       &  0         &\cdots     & a_1                  & a_2       \\
 0      & \cdots  & 0      & 0       & 0          &\cdots     & 0                    & a_1
\end{bmatrix}_{\obmu_i\times\obmu_j}  \quad \text{if } i\geq j,
\end{equation}
or
\begin{equation}\label{eq:Aij-2}
 A_{ij}=\begin{bmatrix}
  a_1     & a_2      &    \cdots       & a_{\obmu_j-1}           &a_{\obmu_j}                \\
   0      &a_1       & \cdots          & a_{\obmu_j-2}          & a_{\obmu_j-1}            \\
          &          &  \ddots         &                   &                    \\
   0      &    0     &                 &  a_1              &     a_2       \\
   0      &     0    &   \cdots        &     0             &   a_1    \\
   0      &    0     &   \cdots        &   0               &  0                  \\
 \vdots   &\vdots    &                 &\vdots             &\vdots               \\
  0       &    0     &    \cdots       &    0              &  0
 \end{bmatrix}_{\obmu_i\times\obmu_j} \quad \text{if } i\leq j,
\end{equation}
for some scalars $a_1,\ldots,a_{\min(\obmu_i,\obmu_j)}\in\Fq$,
and $B_{i1}, C_{1j}$ are of the form
\begin{equation}\label{eq:Bi1}
B_{i1}=
\begin{bmatrix}
b_1   &    b_2   &   \cdots   &    b_{n-k}\\
0     &     0    &   \cdots    &    0\\
\vdots&\vdots&\vdots&\vdots\\
0&0&\cdots&0
\end{bmatrix}_{\obmu_i\times (n-k)},
\qquad
C_{1j}=
\begin{bmatrix}
0        &     \cdots    &  0       &      c_1\\
0        &    \cdots     &  0       &      c_2\\
\vdots   &    \vdots     &  \vdots  &     \vdots\\
0        & \cdots        &   0      & c_{n-k}
\end{bmatrix}_{(n-k) \times\obmu_j},
\end{equation}
for some scalars $b_i, c_i \in \Fq$, where $1\le i \le n-k$.

Denote by $\obmu_{1..i} =\obmu_1+\obmu_2 +\ldots+\obmu_i$ for $1\le i \le r$.
Then from Equations~ \eqref{eq:Aij-1}--\eqref{eq:Bi1} and the fact that $\obmu_i\geq 2$ for $1\leq i\leq r$,
we make the following observations: (I)  The nonzero elements in the rows $\obmu_{1}, \obmu_{1..2}, \ldots, \obmu_{1..r}$ in the matrix $M$ all lie in the  columns $\obmu_{1}, \obmu_{1..2}, \ldots, \obmu_{1..r}$. (II) The nonzero elements in the matrix
$C=
\begin{bmatrix}
C_{11}       &     C_{12}    &  \ldots       &      C_{1r}
\end{bmatrix}
$
all lie in the columns $\obmu_{1}, \obmu_{1..2}, \ldots, \obmu_{1..r}$. 

Denote by $E$ the submatrix of $A_2$ of rows/columns  $\obmu_{1}, \obmu_{1..2}, \ldots, \obmu_{1..r}$, and denote by $A_2'$ the submatrix of $A_2$ with rows/columns  $\obmu_{1}, \obmu_{1..2}, \ldots, \obmu_{1..r}$ removed. Then the submatrix of $M$ with rows/columns  $\obmu_{1}, \obmu_{1..2}, \ldots, \obmu_{1..r}$ removed is of the form $\begin{bmatrix} A_2' & * \\ 0 & D \end{bmatrix}$. Applying the Laplace expansion formula along the rows $\obmu_{1}, \obmu_{1..2}, \ldots, \obmu_{1..r}$ to compute the determinants $\det M$ (or $\det A_2$) only produces one nontrivial term, thanks to the observations (I)--(II) above. Hence we have
\begin{align*}
\det
  \begin{bmatrix}
A_2 & B_1\\
C_1   & D
  \end{bmatrix}
= \det E \cdot \det \begin{bmatrix} A_2' & * \\ 0 & D \end{bmatrix}
= \det E \cdot \det  A_2'  \cdot \det D
= \det A_2\cdot \det D.
\end{align*}
Therefore the matrix
$
  \begin{bmatrix}
A_2 & B_1\\
C_1   & D
  \end{bmatrix}
$
is invertible if and only if both $A_2$ and $D$ are invertible. This proves Claim~2 and hence completes the proof of Claim~1.

The proposition is proved.
\end{proof}

\begin{rem}
The centralizers of a different set of representatives for the conjugacy classes of $G_n$ can be precisely described, following a variant of \cite[Lemma~4.8 and its proof]{CW18}. This can in particular provide another proof of Proposition~\ref{prop:A-pbmu}. We will skip the details as we do not need such a result in this paper.
\end{rem}

\begin{cor}\label{cor:invertible}
Suppose $g\in G_n$ is of the form
$g=
  \begin{bmatrix}
   \bg & 0\\
   0 & I_{n-k}
  \end{bmatrix}
$ and the type of $\bg\in G_k$ is $\bla$ for some $0\leq k\leq n$.
If all parts of the partition $\bla^e$ are strictly bigger than 1, then
a $(k|n-k)$-block matrix
$\begin{bmatrix}
   A & B\\
   C & D
  \end{bmatrix}
$ commuting with $g$ is invertible if and only if both $A$ and $D$ are invertible.
\end{cor}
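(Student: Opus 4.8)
The plan is to deduce the Corollary from Proposition~\ref{prop:A-pbmu} by a conjugation argument that reduces the general element $\bg$ to the canonical form $J_\bla$. First I would observe that the hypothesis ``all parts of $\bla^e$ are strictly bigger than $1$'' is exactly what is needed to realize $\bla$ as a lifted modified type of the shape handled by the Proposition. Concretely, setting $r=\ell(\bla^e)$ and defining $\bmu\in\mP(\Phi)$ by $\bmu(f)=\bla(f)$ for $f\neq t-1$ and $\bmu^e=(\bla^e_1-1,\ldots,\bla^e_r-1)$, each part $\bla^e_i-1$ is positive, so $\bmu^e$ is a genuine partition of length $r$. A short bookkeeping check gives $\|\bmu\|=\|\bla\|-r$ and hence $\|\bmu\|+\ell(\bmu^e)=\|\bla\|=k$, so that $\bla=\obmuk$ and $J_\bla=J_{\obmuk}$. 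Thus, after replacing $\bg$ by $J_\bla$, the element $g$ becomes precisely $J_{\obmun}$.

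Next I would perform the conjugation. Since $\bg$ has type $\bla$, there is $Q\in G_k$ with $Q\bg Q^{-1}=J_\bla$; put $\widetilde Q=\begin{bmatrix} Q & 0\\ 0 & I_{n-k}\end{bmatrix}\in G_n$, so that $\widetilde Q g\widetilde Q^{-1}=J_{\obmun}$. A block matrix $P=\begin{bmatrix} A & B\\ C & D\end{bmatrix}$ commutes with $g$ if and only if $\widetilde Q P\widetilde Q^{-1}=\begin{bmatrix} QAQ^{-1} & QB\\ CQ^{-1} & D\end{bmatrix}$ commutes with $J_{\obmun}$. Conjugation by $\widetilde Q$ leaves $\det P$ unchanged, sends the $D$-block to itself, and replaces the $A$-block by the similar matrix $QAQ^{-1}$, so invertibility of the $A$-block is preserved.

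Finally I would invoke the invertibility criterion established inside the proof of Proposition~\ref{prop:A-pbmu} (``Claim~1'', equivalently the determinant identity $\det\begin{bmatrix} A' & B'\\ C' & D'\end{bmatrix}=\det A'\cdot\det D'$ for matrices commuting with $J_{\obmun}$). Applied to $\widetilde Q P\widetilde Q^{-1}$, this yields $\det P=\det(QAQ^{-1})\cdot\det D=\det A\cdot\det D$, whence $P$ is invertible if and only if both $A$ and $D$ are. I expect the only genuinely delicate point to be the reduction in the first paragraph: one must make sure that the hypothesis on $\bla^e$ translates into the exact constraint $k=\|\bmu\|+\ell(\bmu^e)$ required by the Proposition, i.e.\ that there are no spurious unipotent blocks of size $1$, since it is precisely their absence that forces every commuting block matrix to exhibit the multiplicative determinant behaviour. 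Everything else is a formal transport of the Proposition along the similarity $\widetilde Q$.
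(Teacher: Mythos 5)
Your proposal is correct and follows essentially the same route as the paper: conjugate by $\begin{bmatrix} Q & 0\\ 0 & I_{n-k}\end{bmatrix}$ to bring $\bg$ to the canonical form $J_\bla$ and then apply the invertibility criterion of Proposition~\ref{prop:A-pbmu} (its Claim~1) to the conjugated block matrix. Your explicit verification that the hypothesis on $\bla^e$ yields $\bla=\obmuk$ with $k=\|\bmu\|+\ell(\bmu^e)$ is exactly the bookkeeping the paper leaves implicit, so no gap remains.
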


\begin{proof}
Since the type of $\bg$ is $\bla$, there exists $h\in G_k$ such that $h\bg h^{-1}=J_\bla$.
Suppose
$\begin{bmatrix}
   A & B\\
   C & D
  \end{bmatrix}
$
commutes with $g$. Set
$
H=\begin{bmatrix}
h & 0\\
0 & I_{n-k}
\end{bmatrix}.
$
Then we have
\begin{align*}
\begin{bmatrix}
h^{-1}Ah & h^{-1}B\\
Ch & D
\end{bmatrix}
&= H^{-1}
\begin{bmatrix}
   A & B\\
   C & D
  \end{bmatrix}
H,
\\
\begin{bmatrix}
J_\bla & 0\\
0 & I_{n-k}
\end{bmatrix}
& =H^{-1} g H,
\end{align*}
and these two matrices commute with each other.
By Proposition \ref{prop:A-pbmu} and the assumption that all parts of $\bla^e$ are strictly bigger than $1$, the matrix
$
\begin{bmatrix}
h^{-1}Ah & h^{-1}B\\
Ch & D
\end{bmatrix}
$
is invertible if and only if both $hAh^{-1}$ and $D$ are invertible.
The corollary follows.
\end{proof}

We record the following corollary for later use.

\begin{cor}\label{cor:Kmu-n}
Let $\bmu\in\mP(\Phi)$. Suppose $k=\|\bmu\|+\ell(\bmu^e) \leq n$.
Then the cardinality of the centralizer $\mA_{\obmun}$ of $J_{\obmun}$ in $G_n$ is equal to
\begin{align}\label{eq:ccmu-n}
\big|\mA_{\obmuk}\big|\cdot \big|G_{n-k}\big|\cdot
\big|\{B\in M_{k\times (n-k)}(q)|J_{\obmuk} B=B\}\big|\cdot
\big|\{C\in M_{(n-k)\times k}(q)| C J_{\obmuk} =C\}\big|.
\end{align}
\end{cor}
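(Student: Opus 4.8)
The plan is to read the count directly off the explicit description of the centralizer furnished by Proposition~\ref{prop:A-pbmu}. That proposition identifies $\mA_{\obmun}$ with the set of $(k|n-k)$-block matrices $\begin{bmatrix} A & B \\ C & D \end{bmatrix}$ subject to the four conditions $A\in\mA_{\obmuk}$, $D\in G_{n-k}$, $J_{\obmuk}B=B$, and $CJ_{\obmuk}=C$. The crucial structural feature is that these four conditions are mutually independent: each constrains exactly one of the blocks $A,B,C,D$ (together with the fixed matrix $J_{\obmuk}$), and no condition couples two distinct blocks.

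Concretely, I would exhibit the bijection
\[
\mA_{\obmuk}\times G_{n-k}\times\{B\in M_{k\times(n-k)}(q)\mid J_{\obmuk}B=B\}\times\{C\in M_{(n-k)\times k}(q)\mid CJ_{\obmuk}=C\}\;\longrightarrow\;\mA_{\obmun}
\]
sending a quadruple $(A,D,B,C)$ to the block matrix $\begin{bmatrix} A & B \\ C & D \end{bmatrix}$. Proposition~\ref{prop:A-pbmu} says precisely that this map is well defined and surjective, while it is manifestly injective, since the four blocks can be recovered from the assembled matrix. Counting the product set then yields the asserted factorization of $|\mA_{\obmun}|$ as a product of the four cardinalities.

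The one point to verify — and the mildest of obstacles — is that assembling four independently chosen blocks always produces an element of $G_n$, so that no hidden invertibility constraint secretly couples the factors. This is already built into Proposition~\ref{prop:A-pbmu}: since $\mA_{\obmuk}\subseteq G_k$ and $D\in G_{n-k}$, both $A$ and $D$ are automatically invertible, and by Claim~1 in the proof of that proposition the invertibility of the whole block matrix is equivalent to the simultaneous invertibility of $A$ and of $D$. Hence membership in $G_n$ is automatic and imposes no further relation among the blocks, and the product formula follows with no correction term.
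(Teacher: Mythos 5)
Your proposal is correct and is essentially the paper's own argument: the corollary is recorded immediately after Proposition~\ref{prop:A-pbmu} precisely because \eqref{eq:centralizer} exhibits $\mA_{\obmun}$ as a product set in block coordinates, with each of the four conditions constraining a single block, so the cardinality factorizes. Your extra check that assembling arbitrary admissible blocks yields an invertible matrix (via Claim~1, since $A\in\mA_{\obmuk}\subseteq G_k$ and $D\in G_{n-k}$) is exactly the point already built into the statement of the proposition, so nothing is missing.
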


\begin{rem} Observe that Corollary \ref{cor:Kmu-n} provides another interpretation of the cardinality of $\mA_{\obmun}$.
It can be compared to the general formula given by \eqref{eq:cc-ord} in the following way.
Let $\bmu\in\mP(\Phi)$ with $r=\ell(\bmu^e)$. Suppose $k=\|\bmu\|+r \leq n$. Then by \eqref{eq:obmu} and \eqref{eq:bmuarr}, we observe that
$
a_{\obmun(f)}(q_f)=a_{\obmuk(f)}(q_f), \text{ for } f\neq t-1
$ and furthermore $m_i((\obmun)^e)=m_i((\obmuk)^e)$ for $i\geq 2$ and $m_1(\obmuk)=0, m_1(\obmun)=n-k$.
This together with \eqref{eq:cc-ord} implies
\begin{align}
\big|\mA_{\obmun}\big|&=\big(\prod_{f\in\Phi}a_{\obmuk(f)}(q_f)\big)\cdot q^{2r(n-k)}(q^{n-k}-1)(q^{n-k}-q)\cdots(q^{n-k}-q^{n-k-1})\notag\\
&=\big|\mA_{\obmuk}\big|\cdot \big|G_{n-k}\big|\cdot q^{2r(n-k)}. \label{eq:ccmu-n-decom}
\end{align}
On the other hand, since $\text{rank}(J_{\obmuk}-I_k)=n-r$, we obtain
$$
\big|\{B\in M_{k\times (n-k)}(q)|J_{\obmuk} B=B\}\big|=\big|\{B\in M_{k\times (n-k)}(q)|(J_{\obmuk}-I_k) B=0\}\big|=q^{r(n-k)},
$$
where $I_k$ is the $k\times k$ identity matrix. Similarly, $\big|\{C\in M_{(n-k)\times k}(q)| C J_{\obmuk} =C\}\big|=q^{r(n-k)}$.
Therefore the equation \eqref{eq:ccmu-n-decom} is compatible with the decomposition into four terms in \eqref{eq:ccmu-n}.

\end{rem}

\section{Stability of the centers $\ZZ_n(q)$}
 \label{sec:stable}

 In this section, we examine the interrelations among reflection lengths, fixed point subspaces, and modified types. We show that the associated graded algebra $\mathscr{G}_n(q)$ to the centers $\ZZ_n(q)$ as filtered algebras with respect to the reflection length has structure constants independent of $n$. This leads to a formulation of a stable center, which governs the algebras $\mathscr{G}_n(q)$ for all $n$.

\subsection{The reflection length and modified type}

Recall $V_n=\Fq^n$. For $g\in G_n$, the fixed point subspace by $g$ is denoted by
\[
V_n^g:=\ker(g-1)=\{v\in V_n |gv=v\}.
\]
An element $s$ in $G_n$ is a {\em reflection} if its fixed point subspace has codimension 1. Let $R_n$ be the set of reflections in $G_n$. Then $R_n$ is a generating set for the group $G_n$, since all of the elementary matrices used in Gaussian elimination are reflections and every invertible matrix is row equivalent to the identity matrix. The {\em reflection length} of an element $g\in G_n$ is defined by
\begin{equation}
  \label{eq:length}
\ell(g) := \min \big\{k \big| g = r_1r_2\cdots r_k \text{ for some } r_i\in R_n \big\}.
\end{equation}
The combinatorics of partial orders on $G_n$ arising from the reflection lengths has been studied in \cite{HLR17}.
Recall the codimension ${\rm codim} V_n^g  =n -\dim V_n^g={\rm rank}(g-I_n)$. The reflection length has the following simple and useful geometric interpretation.

\begin{lem}
\cite[Propositions~ 2.9, 2.16]{HLR17}\label{lem:length}
  \label{lem:l=cod}
\begin{enumerate}
\item
For $g\in G_n$, we have $\ell(g) ={\rm codim} V_n^g$.
\item
Suppose $g,h\in G_n$. Then $\ell(gh)\leq\ell(g)+\ell(h)$.
\item
If $\ell(gh)=\ell(g)+\ell(h)$, then $V_n^g\cap V_n^h=V_n^{gh}$ and $V_n=V_n^g+V_n^h$.
\end{enumerate}
\end{lem}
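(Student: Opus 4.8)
The plan is to make part~(1) the heart of the argument and then read off parts~(2) and~(3) from it by elementary linear algebra. Throughout I use the two facts recorded just before the statement: that $\text{rank}(g-I_n)=\codim V_n^g$ (rank--nullity), and that an element $s$ is a reflection precisely when $\text{rank}(s-I_n)=1$, i.e. $s=I_n+u\,\phi^{\intercal}$ for a nonzero column vector $u\in\Fq^n$ and nonzero row vector $\phi^{\intercal}$ with $1+\phi^{\intercal}u\neq 0$, whose fixed hyperplane is $\ker\phi^{\intercal}$. The workhorse is the identity $AB-I_n=A(B-I_n)+(A-I_n)$, which yields the subadditivity $\text{rank}(AB-I_n)\le\text{rank}(A-I_n)+\text{rank}(B-I_n)$.

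For the upper bound $\codim V_n^g\le\ell(g)$ in part~(1): since $R_n$ generates $G_n$, write a reduced expression $g=r_1\cdots r_k$ with $k=\ell(g)$ and $\text{rank}(r_i-I_n)=1$; subadditivity and induction on $k$ give $\text{rank}(g-I_n)\le\sum_i\text{rank}(r_i-I_n)=\ell(g)$. The reverse inequality $\ell(g)\le\codim V_n^g$ is the crux, and I would prove it by induction on $r:=\text{rank}(g-I_n)$, peeling off one reflection at a time. The base case $r=0$ is $g=I_n$. For $r\ge 1$ I aim to produce a reflection $s$ with $\text{rank}(sg-I_n)=r-1$; then $sg$ is a product of $r-1$ reflections by induction and $g=s^{-1}(sg)$ is a product of $r$ reflections, as $s^{-1}$ is again a reflection. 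I would choose $s=I_n+u\,\phi^{\intercal}$ so that (a) $\phi^{\intercal}$ vanishes on $V_n^g$, which makes $s$ fix $V_n^g$ pointwise and hence $V_n^g\subseteq V_n^{sg}$; and (b) $sg$ fixes one additional vector $v_0\notin V_n^g$. Setting $w_0:=(g-I_n)v_0\neq 0$, condition~(b) reads $w_0=-(\phi^{\intercal}(v_0+w_0))\,u$, which I would solve by taking $u=w_0$ and prescribing the values of $\phi^{\intercal}$ on $v_0$ and $w_0$.

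The main obstacle is keeping $s$ invertible, i.e. securing $1+\phi^{\intercal}u\neq 0$ together with the constraints on $\phi^{\intercal}$. A short computation shows that for a fixed $v_0$ the construction breaks down exactly when $gv_0\in V_n^g$, so I must find some $v_0\notin V_n^g$ with $gv_0\notin V_n^g$. This is where invertibility of $g$ is used: were there no such $v_0$, then $g$ would send every vector of $V_n$ into the proper subspace $V_n^g$ (it fixes $V_n^g$, and by assumption maps the complement into $V_n^g$), contradicting surjectivity of $g$. Given a good $v_0$, we get $V_n^{sg}\supseteq V_n^g+\langle v_0\rangle$ of dimension $n-r+1$, while subadditivity forces $\text{rank}(sg-I_n)\ge r-1$; hence $\text{rank}(sg-I_n)=r-1$ and the induction closes.

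Finally I would deduce (2) and (3). Part~(2) is immediate: concatenating reduced expressions for $g$ and $h$ writes $gh$ as a product of $\ell(g)+\ell(h)$ reflections (equivalently, combine subadditivity with part~(1)). For part~(3), start from the always-valid inclusion $V_n^g\cap V_n^h\subseteq V_n^{gh}$ (a common fixed vector of $g$ and $h$ is fixed by $gh$). Under the hypothesis $\ell(gh)=\ell(g)+\ell(h)$, part~(1) converts $\dim(V_n^g\cap V_n^h)\le\dim V_n^{gh}$ into $n\le\dim(V_n^g+V_n^h)$ after the dimension count $\dim(V_n^g\cap V_n^h)=\dim V_n^g+\dim V_n^h-\dim(V_n^g+V_n^h)$; since the right-hand side is at most $n$, this forces $V_n^g+V_n^h=V_n$, and equality of dimensions then upgrades the inclusion to $V_n^g\cap V_n^h=V_n^{gh}$.
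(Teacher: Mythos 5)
Your proof is correct, but there is no internal proof to compare it against: the paper does not prove this lemma, it simply quotes it from \cite[Propositions 2.9, 2.16]{HLR17}, recording only the identity $\codim V_n^g = \text{rank}(g-I_n)$ beforehand. Your argument is a sound, self-contained reconstruction along the same lines as the cited source: the upper bound $\text{rank}(g-I_n)\le\ell(g)$ from the subadditivity identity $AB-I_n=A(B-I_n)+(A-I_n)$, the lower bound by the peeling-off induction that produces a reflection $s$ with $\text{rank}(sg-I_n)=\text{rank}(g-I_n)-1$, and parts (2)--(3) as formal consequences via dimension counts. Two points in your sketch deserve to be written out in full. First, the solvability of the constraints on $\phi^{\intercal}$: with $u=w_0$ you need $\phi^{\intercal}|_{V_n^g}=0$, $\phi^{\intercal}(gv_0)=-1$ (note $gv_0=v_0+w_0$), and $\phi^{\intercal}w_0\neq-1$ for invertibility of $s$; passing to $V_n/V_n^g$, the only obstruction in the degenerate case $w_0\equiv c\,gv_0 \pmod{V_n^g}$ is $c=1$, which would force $v_0\in V_n^g$, a contradiction --- so your claim that the construction fails exactly when $gv_0\in V_n^g$ is right, but this small case analysis should appear. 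Second, your use of surjectivity to find $v_0\notin V_n^g$ with $gv_0\notin V_n^g$ is precisely where invertibility of $g$ is essential, and is worth flagging explicitly since the inductive step collapses without it. As for what each approach buys: the paper's citation keeps its development short, since the lemma is only infrastructure for the stability theorems; your elementary proof makes the reflection-length calculus self-contained, and the explicit rank-one form $s=I_n+u\,\phi^{\intercal}$ of a reflection it isolates is the same structure underlying the canonical forms of reflections displayed later in the paper.
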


For $g\in G_n\subset G_{n+m}$, the fixed subspaces satisfy $V^g_{n+m}=V^g_n\oplus V'_{m}$.
Hence by Lemma~ \ref{lem:length}, the length function is compatible with the embedding $G_n\subset G_{n+m}$.
In particular, a reflection $s$ in $G_n$ is also a reflection in $G_{n+m}$, that is, $R_n\subset R_{n+m}$, and hence
accordingly the set of reflections in $G_\infty$ is the union
\[
R:=\cup_{n\geq 1}R_n.
\]
Then $g\in G_\infty$ has the length $\ell(g)=\min\{k|g=r_1r_2\cdots r_k, \text{ for some }r_i\in R\}$.
It follows readily by \eqref{eq:Jbla} that a reflection $s$ is similar to the canonical form
$$
  \begin{bmatrix}
   1 & {\bf 1}&  0 & \cdots & 0 &0\\
   0 & 1 & 0 & \cdots & 0 &0\\
   0 & 0 & 1 &\cdots  & 0 &0\\
   \vdots & \vdots & \vdots & \vdots & \vdots & \vdots\\
   0 & 0& 0 & \cdots &  1 & 0\\
   0 & 0& 0 & \cdots & 0  & 1
  \end{bmatrix}
\qquad
\text{or }
\qquad
   \begin{bmatrix}
   \xi & 0&  0 & \cdots & 0 &0\\
   0 & 1 & 0 & \cdots & 0 &0\\
   0 & 0 & 1 &\cdots  & 0 &0\\
   \vdots & \vdots & \vdots & \vdots & \vdots & \vdots\\
   0 & 0& 0 & \cdots &  1 & 0\\
   0 & 0& 0 & \cdots & 0  & 1
  \end{bmatrix}\quad (\text{for } \xi\in\Fq \backslash \{0,1\}).
$$
Equivalently, an element in $G_\infty$ is a reflection if and only if its modified type $\bmu$ satisfies $\|\bmu\|=1$.

\begin{lem} \label{lem:length-ineq}
$\quad$
\begin{enumerate}
\item
If $g\in\mathscr{K}_\bmu$, then $\ell(g)=\|\bmu\|.$

\item
If the modified types of $g,h$, $gh \in G_\infty$ are $\bla,\bmu$ and $\bnu$, then $\|\bnu\|\leq \|\bla\|+\|\bmu\|$.
\end{enumerate}
\end{lem}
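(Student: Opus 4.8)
The plan is to derive part (2) immediately from part (1) together with the subadditivity of the reflection length, so that the real work lies entirely in part (1). Indeed, once (1) is available, if $g,h,gh$ have modified types $\bla,\bmu,\bnu$ then $\|\bla\|=\ell(g)$, $\|\bmu\|=\ell(h)$ and $\|\bnu\|=\ell(gh)$, and Lemma~\ref{lem:length}(2) yields $\|\bnu\|=\ell(gh)\le \ell(g)+\ell(h)=\|\bla\|+\|\bmu\|$.

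For part (1), the crucial input is the geometric interpretation $\ell(g)={\rm codim}\, V_n^g={\rm rank}(g-I_n)$ furnished by Lemma~\ref{lem:length}(1). Both sides are conjugation invariant and, as noted immediately after Lemma~\ref{lem:length}, stable under the embeddings $G_n\subset G_{n+m}$; hence I may choose any $n$ with $g\in G_n$, let $\bla\in\mP_n(\Phi)$ be the ordinary type of $g$, and compute $\ell(g)$ on the canonical representative $J_\bla$. The goal thereby reduces to evaluating $\dim\ker(J_\bla-I_n)$.

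This computation is purely block-diagonal. Writing $J_\bla=\diag(J_{\bla_i(f)}(f))_{f,i}$, it suffices to understand $\ker(J_m(f)-I)$ for a single block. If $f\neq t-1$, then since $f$ is monic irreducible and different from $t-1$ we have $f(1)\neq 0$, so $1$ is not an eigenvalue of $J_m(f)$, the block $J_m(f)-I$ is invertible, and it contributes a trivial kernel. If $f=t-1$, then $J_m(t-1)$ is a single unipotent Jordan block of size $m$, so $J_m(t-1)-I_m$ is the nilpotent shift of rank $m-1$ and its kernel is one-dimensional. Summing over all blocks, $\dim\ker(J_\bla-I_n)$ equals the number of unipotent Jordan blocks, namely $r:=\ell(\bla^e)$, whence $\ell(g)=n-r=\|\bla\|-\ell(\bla^e)$.

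It remains to identify $\|\bla\|-\ell(\bla^e)$ with $\|\bmu\|$, where $\bmu=\mathring{\bla}$. Since $\mathring{\bla}$ agrees with $\bla$ away from $t-1$ and replaces each of the $r$ parts of $\bla^e$ by that part minus one, and since $d(t-1)=1$, one gets $\|\mathring{\bla}\|=\|\bla\|-r=\|\bla\|-\ell(\bla^e)$. Thus $\ell(g)=\|\mathring{\bla}\|=\|\bmu\|$, proving (1). I do not expect a genuine obstacle here: the only conceptual ingredient is the identity $\ell={\rm codim}$ of the fixed subspace, already supplied by Lemma~\ref{lem:length}, and the remainder is the bookkeeping observation that the passage to the modified type removes exactly one box from each unipotent row — matching the one-dimensional kernel of each unipotent block. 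The single point requiring care is the use of irreducibility to guarantee $f(1)\neq 0$ for $f\neq t-1$, which is precisely what isolates the unipotent contribution to $\dim\ker(J_\bla-I_n)$.
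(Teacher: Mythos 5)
Your proof is correct and follows essentially the same route as the paper: reduce by conjugation invariance (and stability under the embeddings $G_n\subset G_{n+m}$) to the canonical form, apply $\ell(g)={\rm codim}\, V_n^g$ from Lemma~\ref{lem:length}, compute the kernel block by block, and deduce part (2) from part (1) together with subadditivity. The details you supply --- $f(1)\neq 0$ for $f\neq t-1$, the one-dimensional kernel of each unipotent block, and the bookkeeping identity $\|\mathring{\bla}\|=\|\bla\|-\ell(\bla^e)$ --- are precisely the verification the paper compresses into ``one checks directly.''
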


\begin{proof}
As the reflection length is conjugation invariant, we can take $g=J_\obmun$; cf.  \eqref{eq:Jbla} and \eqref{eq:obmu}-\eqref{eq:bmuarr} for notations. Following the definitions, one checks directly that ${\rm codim} V_n^{g} =\|\bmu\|$. Hence (1) follows by using Lemma \ref{lem:length}. Part~(2) follows by (1) and Lemma~\ref{lem:l=cod}(2).
\end{proof}

\subsection{A normal form of triples}

The following proposition is a crucial step in the proof of the stability as formulated in the subsequent subsections.

\begin{prop} \label{prop:length-eq}
Let $g,h$, $gh \in G_n$ be of modified type $\bla,\bmu$ and $\bnu$, respectively.
Suppose $\|\bnu\|= \|\bla\|+\|\bmu\|$.
Set $k =\|\bnu\|+\ell(\bnu^e)$.
Then there exists $z\in G_n$ and $\bg,\bh\in G_{k}$ such that
\[
zgz^{-1}=
  \begin{bmatrix}
   \bg & 0\\
   0&I_{n-k}
  \end{bmatrix},\qquad
zhz^{-1}=
  \begin{bmatrix}
   \bh & 0\\
   0&I_{n-k}
  \end{bmatrix},\qquad
zghz^{-1}=
  \begin{bmatrix}
   \bg\bh & 0\\
   0&I_{n-k}
  \end{bmatrix}.
\]
\end{prop}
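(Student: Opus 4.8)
The plan is to produce the conjugating element $z$ geometrically, by exhibiting a single direct sum decomposition $V_n = U \oplus W$ with $\dim U = k$ that is invariant under $g$ and $h$ simultaneously and on whose summand $W$ all of $g, h, gh$ act as the identity; the required $z \in G_n$ is then just the change of basis carrying an adapted basis (a basis of $U$ followed by a basis of $W$) to the standard one. By Lemma~\ref{lem:length-ineq}(1) the hypothesis $\|\bnu\| = \|\bla\| + \|\bmu\|$ says exactly that $\ell(gh) = \ell(g) + \ell(h)$, so I may freely use additivity of reflection length together with Lemma~\ref{lem:l=cod}(1), which identifies $\ell(g)$ with $\codim V_n^g = {\rm rank}(g-1) = \dim \im(g-1)$, and Lemma~\ref{lem:l=cod}(3), which gives $V_n^{gh} = V_n^g \cap V_n^h$.

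First I would pin down the correct invariant subspace. Set $U_0 = \im(g-1) + \im(h-1)$. From the identity $gh - 1 = (g-1)h + (h-1)$ one gets $\im(gh-1) \subseteq U_0$, and then the squeeze
$$\|\bnu\| = \dim \im(gh-1) \le \dim U_0 \le \dim \im(g-1) + \dim \im(h-1) = \|\bla\| + \|\bmu\| = \|\bnu\|$$
forces $U_0 = \im(gh-1)$ with $\dim U_0 = \|\bnu\|$. The crucial structural point is that \emph{every} subspace $U$ with $U_0 \subseteq U$ is automatically $\langle g, h\rangle$-invariant: for $u \in U$ we have $gu = u + (g-1)u \in U + \im(g-1) \subseteq U$, and likewise for $h$. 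Equivalently, $V_n/U_0$ is a trivial $\langle g, h\rangle$-module. This reduces the whole problem to choosing, by dimension count, a subspace $U \supseteq U_0$ of dimension $k$ that admits a complement $W$ lying inside the common fixed space $V_n^{gh} = V_n^g \cap V_n^h$: such a $W$ is fixed pointwise by $g$ and $h$, hence by $gh$.

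The key numerical input, and what I expect to be the main obstacle, is the overlap $\dim(V_n^{gh} \cap U_0) = \dim\big(\ker(gh-1) \cap \im(gh-1)\big)$. One might naively hope $U_0$ is a complement of the fixed space, but the two genuinely overlap, and reconciling this is the heart of the matter. Working in the generalized $1$-eigenspace of $gh$ (on the remaining primary components $gh-1$ is invertible and contributes nothing to $\ker(gh-1)$), this intersection is spanned by the bottom vectors of the Jordan chains of $gh$ at eigenvalue $1$ of length $\ge 2$, so its dimension is the number of such chains. Since $gh$ has type $\bnu^{\uparrow n}$, its unipotent part $(\bnu^{\uparrow n})^e = (\bnu^e_1+1,\ldots,\bnu^e_r+1,1,\ldots,1)$ has exactly $\ell(\bnu^e)$ parts that are $\ge 2$; hence $\dim(V_n^{gh} \cap U_0) = \ell(\bnu^e)$.

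With this in hand the construction is immediate. Using $\dim V_n^{gh} = n - \|\bnu\|$ from Lemma~\ref{lem:l=cod}(1), I take $W$ to be a complement of $V_n^{gh} \cap U_0$ inside $V_n^{gh}$, so that $W \subseteq V_n^{gh}$, $W \cap U_0 = 0$, and
$$\dim W = \dim V_n^{gh} - \dim(V_n^{gh}\cap U_0) = (n - \|\bnu\|) - \ell(\bnu^e) = n - k.$$
Then $U_0 \oplus W$ has dimension $n - \ell(\bnu^e)$; choosing any complement $Y$ of $U_0 \oplus W$ in $V_n$ gives $V_n = U_0 \oplus W \oplus Y$, and setting $U = U_0 \oplus Y$ yields $\dim U = k$, $U \supseteq U_0$ (so $U$ is $\langle g, h\rangle$-invariant) and $V_n = U \oplus W$. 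Relative to an adapted basis the operators $g, h, gh$ take the forms $\diag(\bg, I_{n-k})$, $\diag(\bh, I_{n-k})$, $\diag(\bg\bh, I_{n-k})$ with $\bg, \bh \in G_k$, the third being automatic from $zghz^{-1} = (zgz^{-1})(zhz^{-1})$; taking $z$ to be the corresponding change-of-basis matrix in $G_n$ completes the proof.
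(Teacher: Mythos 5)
Your proof is correct, and it takes a genuinely different route from the paper's. The paper argues matrix-theoretically: it conjugates $gh$ to the canonical form $J_{\obnun}$, uses Lemma~\ref{lem:l=cod}(3) to see that $g$ and $h$ fix the last $n-k$ basis vectors, and thereby obtains only block \emph{lower-triangular} forms $\begin{bmatrix} \bg & 0\\ g' & I_{n-k}\end{bmatrix}$ and $\begin{bmatrix} \bh & 0\\ h' & I_{n-k}\end{bmatrix}$; the heart of its proof is then showing $g'=h'=0$, via the relation $g'\bh+h'=0$, the factorizations $g'=A(\bg-I_k)$ and $h'=B(\bh-I_k)$ extracted from rank equalities, and the observation that the rank additivity $\mathrm{rank}\big((\bg-I_k)\bh+(\bh-I_k)\big)=\mathrm{rank}\big((\bg-I_k)\bh\big)+\mathrm{rank}(\bh-I_k)$ forces the row spaces of $(\bg-I_k)\bh$ and $\bh-I_k$ to intersect trivially. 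You avoid this correction step entirely by constructing an invariant splitting up front: the squeeze identifying $\im(g-1)+\im(h-1)=\im(gh-1)$ of dimension $\|\bnu\|$, the remark that every overspace of this is $\langle g,h\rangle$-invariant, and the Jordan-block count $\dim\big(\ker(gh-1)\cap\im(gh-1)\big)=\ell(\bnu^e)$ are all correct --- the last because the unipotent Jordan type of $gh$ is $(\bnu^e_1+1,\ldots,\bnu^e_r+1,1,\ldots,1)$ with exactly $r=\ell(\bnu^e)$ blocks of size at least $2$, and the intersection of kernel and image indeed localizes to the generalized $1$-eigenspace as you say --- and the subsequent bookkeeping $V_n=U_0\oplus W\oplus Y$, $U=U_0\oplus Y$, $W\subseteq V_n^{gh}=V_n^g\cap V_n^h$ is airtight, with $\dim W = n-k \ge 0$ guaranteed since $gh\in G_n$ has modified type $\bnu$. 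Comparatively, your construction is more conceptual: it explains intrinsically why the cutoff is $k=\|\bnu\|+\ell(\bnu^e)$ rather than $\|\bnu\|$ (the defect is exactly $\dim(\ker\cap\im)$ for $gh-1$), whereas the paper reads $k$ off the shape of $J_{\obnun}$ and then stays within elementary rank manipulations over $\Fq$, never needing the block-size count; the paper's version also hands you the normal form with $\bg\bh$ conjugate to $J_{\obnu}$ on the nose, while in your version that identification of the type of $\bg\bh$ is immediate from $U\supseteq\im(gh-1)$ but worth a sentence if one wants the remark following the proposition.
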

Note that $\bg,\bh\in G_{k}$ above have modified types $\bla, \bmu$ respectively, and $\bg\bh$ is of type $\obnu$. We regard the triple of matrices in the proposition above as a {\em normal form} for the triple $(g,h,gh)$ which satisfies $\ell(g) +\ell(h) =\ell(gh)$.

\begin{proof}
Since the modified type of $gh$ is $\bnu$,  the type of $gh$ is $\obnun$.
Then by \eqref{eq:Jbla}, $gh$ is conjugate to $J_{\obnun}$, and thus there exists a basis $\{v_1,\ldots,v_k,v_{k+1},\ldots,v_n\}$ of $V_n=\Fq^n$ such that $(gh)v_i\in\text{span-}\{v_1,\ldots,v_k\}$ for $1\leq i\leq k$ and $(gh)v_i=v_i$ for $k+1\leq i\leq n$.
By Lemma~ \ref{lem:length-ineq}, we have
\begin{equation}
  \label{eq:3gh}
 \ell(gh)=\|\bnu\|=\|\bla\|+\|\bmu\|=\ell(g)+\ell(h),
\end{equation}
and then by Lemma \ref{lem:l=cod}, $gv_i=v_i$ and $hv_i=v_i$ for $k+1\leq i\leq n$. Therefore, there exist elements $z\in G_n$ and $\bg,\bh\in G_k$
and $g',h'\in M_{(n-k)\times k}(q)$ such that
\[
zgz^{-1}=
  \begin{bmatrix}
   \bg & 0\\
   g'&I_{n-k}
  \end{bmatrix}
,\qquad
zhz^{-1}=
  \begin{bmatrix}
   \bh & 0\\
   h'&I_{n-k}
  \end{bmatrix},\qquad
zghz^{-1}=
  \begin{bmatrix}
   \bg\bh & 0\\
   0&I_{n-k}
  \end{bmatrix}.
\]

It remains to show $g'=0=h'$. It follows by $zghz^{-1}=(zgz^{-1})(zhz^{-1})$ that
\begin{equation}\label{eq:g2h1}
g'\bh+h'=0.
\end{equation}
Furthermore, the following holds:
\begin{align*}
\ell(gh)=\ell(zghz^{-1})=\ell(\bg \bh)&\leq \ell(\bg)+\ell(\bh)\\
&=\text{rank}(\bg-I_k)+\text{rank}(\bh-I_k)\\
&\leq \text{rank}
  \begin{bmatrix}
   \bg-I_k & 0\\
   g'&0
  \end{bmatrix}
+\text{rank}
  \begin{bmatrix}
   \bh-I_k & 0\\
   h'&0
  \end{bmatrix}
\\
&=\text{rank}(zgz^{-1}-I_n)+\text{rank}(zhz^{-1}-I_n)\\
&=\ell(g)+\ell(h).
\end{align*}
From this together with the equality $\ell(gh)=\ell(g)+\ell(h)$ from \eqref{eq:3gh} we conclude that all the inequalities above are indeed equalities:
\begin{align}
\ell(\bg\bh)&=\ell(\bg)+\ell(\bh),
\label{eq:l-g1h1}\\
\text{rank}(\bg-I_k)&=\text{rank}
  \begin{bmatrix}
   \bg-I_k & 0\\
   g'&0
  \end{bmatrix},
\label{eq:rankg-1}
\\
\text{rank}(\bh-I_k)&=\text{rank}
  \begin{bmatrix}
   \bh-I_k & 0\\
   h'&0
  \end{bmatrix}.\label{eq:rankh-1}
\end{align}
Then by \eqref{eq:rankg-1} and \eqref{eq:rankh-1}, there exist $A,B\in M_{(n-k)\times k} (q)$ such that
\begin{equation}
  \label{eq:ghAB}
g'=A(\bg-I_k), \qquad h'=B(\bh-I_k).
\end{equation}

By \eqref{eq:l-g1h1} and the invertibility of $\bh$, we have
\begin{align*}
\text{rank}(\bg\bh-I_k)=\text{rank}(\bg-I_k)+\text{rank}(\bh-I_k)=\text{rank} \big( (\bg-I_k)\bh \big)+\text{rank}(\bh-I_k),
\end{align*}
which can then be rewritten as
\begin{equation}\label{eq:rank-g1h1}
\text{rank} \big((\bg-I_k)\bh+(\bh-I_k) \big)=\text{rank} \big( (\bg-I_k)\bh \big) +\text{rank}(\bh-I_k).
\end{equation}
Let $U_1$ and $U_2$ be the subspaces of $V_k$ spanned by the row vectors of $(\bg-I_k)\bh$ and $\bh-I_k$, respectively.
Then by \eqref{eq:rank-g1h1} we have
\begin{equation}\label{eq:U1U2}
U_1\cap U_2=0.
\end{equation}
On the other hand, by \eqref{eq:g2h1} and \eqref{eq:ghAB}, we have
\[
A(\bg-I_k)\bh+B(\bh-I_k)=0.
\]
Observe that the row vectors of $A(\bg-I_k)\bh$ belong to $U_1$ while the row vectors of $\bh-I_k$ belong to $U_2$.
Then by \eqref{eq:U1U2} we obtain
\[
A(\bg-I_k)\bh=0,\qquad B(\bh-I_k)=0.
\]
Hence we have $h'=B(\bh-I_k)=0$, and then by \eqref{eq:g2h1}, $g'= -h' \bh^{-1} =0$.

The proposition is proved.
\end{proof}

\subsection{A stability property}

Recall from Lemma~\ref{lem:Kbmu} that the set $\{K_\bmu(n) \neq 0 | \bmu \in \mathcal{P}(\Phi)\}$  forms the class sum $\mathbb{Z}$-basis for the center $\ZZ_n(q)$.
Given $\bla,\bmu\in\mP(\Phi)$, we can write the product in  $\ZZ_n(q)$ as in \eqref{eq:a(n)},
where the structure constants $a^\bnu_{\bla\bmu}(n)$ are zero unless $\|\bnu\|\leq \|\bla\|+\|\bmu\|$
by Lemma~ \ref{lem:length-ineq}. For $\bnu$ with $\|\bnu\|+\ell(\bnu^e)\leq n$, the coefficient $a^\bnu_{\bla\bmu}(n)$
is uniquely determined.

\begin{thm}   \label{thm:indep}
Let $\bla,\bmu,\bnu\in\mP(\Phi)$.
If $\|\bnu\|= \|\bla\|+\|\bmu\|$,
then $a^\bnu_{\bla\bmu}(n)$ is a nonnegative integer independent of $n$. 
{\em (In this case, we shall write $a^\bnu_{\bla\bmu}(n)$ as $a^\bnu_{\bla\bmu}$.)}
\end{thm}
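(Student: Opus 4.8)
The plan is to give the structure constant a purely combinatorial meaning and then exhibit an explicit $n$-independent model for it. Since $K_\bla(n)K_\bmu(n)$ is central, for any fixed $x_0\in\mathscr{K}_\bnu(n)$ we have
\[
a^\bnu_{\bla\bmu}(n)=\big|\{(g,h)\mid g\in\mathscr{K}_\bla(n),\ h\in\mathscr{K}_\bmu(n),\ gh=x_0\}\big|,
\]
a count that is independent of the chosen $x_0$. First I would take the distinguished representative $x_0=J_{\obnun}=\begin{bmatrix}J_{\bnu^{\uparrow k}}&0\\0&I_{n-k}\end{bmatrix}$, where $k=\|\bnu\|+\ell(\bnu^e)$ depends only on $\bnu$, and write $P_n$ for the set of pairs counted above. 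The goal is a bijection $P_n\cong P_k$, which at once gives $a^\bnu_{\bla\bmu}(n)=a^\bnu_{\bla\bmu}(k)\in\N$ for all $n\ge k$ (for $n<k$ the class $K_\bnu(n)$ is zero and nothing is asserted).

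One inclusion is easy: the embedding $G_k\subset G_n$ sends $(\bg,\bh)\in P_k$ to $\big(\diag(\bg,I_{n-k}),\,\diag(\bh,I_{n-k})\big)$, and because modified types and products are preserved by this embedding, the image lies in $P_n$; this map is clearly injective. The real content is the reverse inclusion: every pair in $P_n$ is already block-diagonal of this shape. Here I would argue exactly as in the proof of Proposition~\ref{prop:length-eq}. Since $\|\bnu\|=\|\bla\|+\|\bmu\|$, Lemma~\ref{lem:length-ineq} yields $\ell(g)+\ell(h)=\ell(gh)$, so Lemma~\ref{lem:l=cod}(3) forces $V_n^{g}\cap V_n^{h}=V_n^{gh}=V_n^{x_0}\supseteq V'_{n-k}$. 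Hence $g$ and $h$ fix $V'_{n-k}$ pointwise, i.e. $g=\begin{bmatrix}\bg&0\\g'&I_{n-k}\end{bmatrix}$ and $h=\begin{bmatrix}\bh&0\\h'&I_{n-k}\end{bmatrix}$ with $\bg\bh=J_{\bnu^{\uparrow k}}$ and $g'\bh+h'=0$.

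It then remains to show $g'=h'=0$, which is precisely the rank computation at the end of the proof of Proposition~\ref{prop:length-eq}. Chaining $\|\bnu\|=\ell(\bg\bh)\le\mathrm{rank}(\bg-I_k)+\mathrm{rank}(\bh-I_k)\le\|\bla\|+\|\bmu\|=\|\bnu\|$, where the last inequality uses $\mathrm{rank}(\bg-I_k)\le\mathrm{rank}\begin{bmatrix}\bg-I_k\\g'\end{bmatrix}=\ell(g)=\|\bla\|$ and its analogue for $h$, shows that all inequalities are equalities. In particular $g'=A(\bg-I_k)$ and $h'=B(\bh-I_k)$ for some matrices $A,B$, and $\ell(\bg\bh)=\ell(\bg)+\ell(\bh)$ holds in $G_k$. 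This last equality makes the row spaces of $(\bg-I_k)\bh$ and of $\bh-I_k$ intersect trivially, so the relation $A(\bg-I_k)\bh+B(\bh-I_k)=0$ coming from $g'\bh+h'=0$ splits into $A(\bg-I_k)\bh=0$ and $B(\bh-I_k)=0$, whence $h'=0$ and then $g'=-h'\bh^{-1}=0$. Thus $(\bg,\bh)\in P_k$, completing the bijection.

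The main obstacle is the reverse inclusion, and within it the vanishing of the off-diagonal blocks $g',h'$; the rest is bookkeeping, and nonnegativity and integrality are automatic from the counting interpretation. The conceptual point is that additivity of reflection lengths forces the three elements to be simultaneously supported on the fixed subspace $V_k$ with $k$ determined by $\bnu$ alone, which is exactly what decouples the count from $n$. I expect the only delicate bit is keeping the reductions honest: verifying that the block form above genuinely records the modified types $\bla,\bmu$ of $\bg,\bh$ in $G_k$, and that $\bg\bh=J_{\bnu^{\uparrow k}}$ rather than merely being of that type, so that the easy direction and the reverse direction match up to the same set $P_k$.
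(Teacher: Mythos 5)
Your proof is correct, and although it borrows the paper's key technical lemma, it assembles the count by a genuinely different route. The shared core is the rank/row-space argument killing the off-diagonal blocks $g',h'$, which is word-for-word the second half of the paper's proof of Proposition~\ref{prop:length-eq}; your use of it is legitimate because fixing the product to be the canonical representative $x_0=J_{\obnun}$, whose fixed space already contains $V'_{n-k}$, lets Lemma~\ref{lem:length-ineq} and Lemma~\ref{lem:l=cod}(3) put $g$ and $h$ in block-lower-triangular form with no conjugating element $z$ at all. The divergence is in the stability mechanism. The paper proves the normal form for arbitrary triples, partitions the set of pairs $(g,h)$ into finitely many simultaneous-conjugation orbits $\mathscr{C}_1,\ldots,\mathscr{C}_r$, and extracts $n$-independence from an orbit--stabilizer computation; there the $n$-dependent factors $|G_{n-k}|$ and the counts of the stabilizer blocks $z_2,z_3$ must be cancelled against $|\mathcal{A}_{\obnun}|$, which is exactly why the paper needs the centralizer machinery (Proposition~\ref{prop:A-pbmu}, Corollaries~\ref{cor:invertible} and \ref{cor:Kmu-n}) plus a separate Claim, itself resting on length additivity, characterizing when $z_2,z_3$ are fixed by both $\overline{g}_i$ and $\overline{h}_i$. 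Your bijection $P_n\cong P_k$ sidesteps all of that: nothing $n$-dependent ever appears, and integrality and nonnegativity are immediate from the factorization count. What the paper's longer route buys in exchange is the explicit closed formula for $a^\bnu_{\bla\bmu}$ as a sum, over the finitely many pair-orbits, of indices of simultaneous centralizers in $G_k$ --- structured data of the kind exploited in the computations of Section~\ref{sec:computation} --- whereas your argument yields only the (equally valid, but less refined) identification $a^\bnu_{\bla\bmu}=|P_k|=a^\bnu_{\bla\bmu}(k)$. Your two flagged worries are indeed mere bookkeeping: modified type is invariant under the embedding $G_k\subset G_n$, so $\bg,\bh$ do carry modified types $\bla,\bmu$ in $G_k$, and the block identity $J_{\obnun}=\diag\big(J_{\bnu^{\uparrow k}},I_{n-k}\big)$, stated explicitly at the start of the proof of Proposition~\ref{prop:A-pbmu}, guarantees that both directions of your bijection land on the same set $P_k$ with product exactly $J_{\bnu^{\uparrow k}}$ and not merely an element of its class.
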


\begin{proof}
Set  $k=\|\bnu\|+\ell(\bnu^e)$.

Observe that $G_\infty$ acts on the set of pairs
$$
\mathcal{T}=\{(g,h)\in G_\infty\times G_\infty| g\in\mathscr{K}_\bla, h\in\mathscr{K}_\bmu, gh\in\mathscr{K}_\bnu\}
$$
by simultaneous conjugation: $z. (g,h) =(zgz^{-1}, zhz^{-1})$. We say $(g,h)$ and $(zgz^{-1}, zhz^{-1})$ are conjugate, and so the set $\mathcal{T}$ is a union of such conjugate classes. We claim that the set of conjugate classes of such pairs in $\mathcal{T}$ is finite. Indeed, by Proposition \ref{prop:length-eq} each pair$(g,h)\in\mathcal{T}$ is conjugate to some pair lying in $G_k\times G_k$, which is a finite set. Denote the conjugate classes in $\mathcal{T}$ by $\mathscr{C}_1,\ldots,\mathscr{C}_r$.

Suppose $n\geq \|\bnu\|+\ell(\bnu^e)$. Then by Proposition \ref{prop:length-eq}, there exists $(g_i,h_i)\in \mathscr{C}_i\cap (G_n\times G_n)$ for each $1\leq i\leq r$. Moreover, $\mathscr{C}_i \cap (G_n\times G_n)$ is a single conjugate class in $\mathcal{T} \cap (G_n\times G_n)$ under the simultaneous conjugation of $G_n$.  By applying Proposition ~\ref{prop:length-eq} once more, there exist $\overline{g}_i,\overline{h}_i\in G_k$ such that
the pair $(g_i,h_i)$ is conjugate to the pair $(\widetilde{g}_i, \widetilde{h}_i)$, where we have denoted
\[
\widetilde{g}_i=\begin{bmatrix} \overline{g}_i& 0\\ 0& I_{n-k}\end{bmatrix},
\qquad
\widetilde{h}_i=\begin{bmatrix} \overline{h}_i& 0\\ 0& I_{n-k}\end{bmatrix}.
\]
Let us denote the stabilizer of the pair $(\widetilde{g}_i, \widetilde{h}_i)$ by $G_{n;(\widetilde{g}_i, \widetilde{h}_i)}$. Then
for each $i$, we have
\begin{align}
\big| G_{n;(\widetilde{g}_i, \widetilde{h}_i)} \big|
=&\Big|\big\{z\in G_n\big| z\widetilde{g}_iz^{-1}=\widetilde{g}_i,
z\widetilde{h}_iz^{-1}=\widetilde{h}_i\big\}\Big|
\notag\\
\stackrel{\rm (i)}{=} &\Bigg|\Big\{z=\begin{bmatrix} z_1& z_2\\ z_3& z_4\end{bmatrix}\in M_{n\times n}(q)\Big| z_1\in G_k, z_4\in G_{n-k}, z\widetilde{g}_iz^{-1}=\widetilde{g}_i, z\widetilde{h}_iz^{-1}=\widetilde{h}_i\Big\}\Bigg|
\notag\\
=&
\Big|\big\{z_1\in G_k \big|  z_1\overline{g}_iz_1^{-1}=\overline{g}_i,z_1\overline{h}_iz_1^{-1}=\overline{h}_i\big\}\Big|
\cdot \Big|G_{n-k} \Big|\cdot
  \notag\\
\cdot&
\big|\big\{z_2\in M_{k\times(n-k)}(q)\big| \overline{g}_iz_2=z_2=\overline{h}_iz_2\big\}\big|
\cdot\big|\big\{z_3\in M_{(n-k)\times k}(q)\big| z_3\overline{g}_i=z_3=z_3\overline{h}_i\big\}\big|
\notag\\
\stackrel{\rm (ii)}{=} &\Big|\big\{z_1\in G_k \big| z_1\overline{g}_iz_1^{-1}=\overline{g}_i,z_1\overline{h}_iz_1^{-1}=\overline{h}_i\big\}\Big|
\cdot \Big|G_{n-k} \Big|\cdot
 \notag\\
\cdot& \Big|\big\{z_2\in M_{k\times(n-k)}(q)\big|
\overline{g}_i\overline{h}_iz_2=z_2\big\}\Big|
\cdot\Big|\big\{z_3\in M_{(n-k)\times k}(q)\big|
z_3\overline{g}_i\overline{h}_i=z_3\big\}\Big|, \notag
\end{align}
where the equality (i) follows from Corollary \ref{cor:invertible} since the type of $\overline{g}_i\overline{h}_i$
is $\obnu$ and the equality (ii) follows from the following.

\vspace{2mm}
\noindent {\bf Claim.}
\begin{enumerate}
\item
For $z_2\in M_{k\times(n-k)}(q)$, then $\overline{g}_iz_2=z_2=\overline{h}_iz_2$ if and only if $\overline{g}_i\overline{h}_iz_2=z_2$.
\item
For $z_3\in M_{(n-k)\times k}(q)$, then $z_3\overline{g}_i=z_3=z_3\overline{h}_i$ if and only if $z_3\overline{g}_i\overline{h}_i=z_3$.
\end{enumerate}

We prove the Claim. Denote by $C_2$ an arbitrary column vector of $z_2$.
Since $\ell(\overline{g}_i\overline{h}_i)=\|\bnu\|=\|\bla\|+\|\bmu\|=\ell(\overline{g}_i)+\ell(\overline{h}_i)$, applying  Lemma \ref{lem:l=cod} we obtain
$C_2 \in V_k^{\overline{g}_i} \cap V_k^{\overline{h}_i}$ if and only if $C_2 \in V_k^{\overline{g}_i \overline{h}_i}$, i.e.,
$\overline{g}_iC_2=C_2=\overline{h}_iC_2$ if and only if $\overline{g}_i \overline{h}_i C_2=C_2$, whence (1).
Noting the reflection length is transpose invariant, we have $\ell(\overline{h}_i^\intercal\overline{g}_i^\intercal)= \ell(\overline{h}_i^\intercal)+\ell(\overline{g}_i^\intercal)$. Then Claim (2) follows by (1).
This completes the proof of the Claim.

Using the above identity for $\big| G_{n;(\widetilde{g}_i, \widetilde{h}_i)} \big|$ and Corollary \ref{cor:Kmu-n} we obtain
\begin{align*}
\ds a^{\bnu}_{\bla\bmu}(n)
&=\sum^r_{i=1}\frac{\big|\mathscr{C}_i\cap (G_n\times G_n)\big|}{\big|\mathscr{K}_\bnu\cap G_n\big|}
=\sum^r_{i=1}\frac{|\mathcal{A}_{\obnun}| }{\big| G_{n;(\widetilde{g}_i, \widetilde{h}_i)} \big|}\\
&=\sum^r_{i=1}\frac{|\mathcal{A}_{\obnu}|}{\Big|\big\{z_1\in G_k\big| z_1\overline{g}_iz_1^{-1}=\overline{g}_i,z_1\overline{h}_iz_1^{-1}=\overline{h}_i\big\}\Big|},
\end{align*}
which is independent of $n$. The theorem is proved.
\end{proof}

\subsection{The stable center}

Let $\mathcal{K}_m$ be the subspace of $\ZZ_n(q)$ spanned by the elements $K_{\bla}(n)$ with $\|\bla\|\leq m$ and $\bla\in\mP(\Phi)$. Thanks to Lemma~ \ref{lem:length-ineq}, the assignment of degree $\|\bla\|$ to $K_\bla(n)$
provides $\ZZ_n(q)$ a filtered ring structure with the filtration $0\subset\mathcal{K}_0\subset \mathcal{K}_1\subset\mathcal{K}_2\subset\cdots\subset \ZZ_n(q)$.  Then we can define the associated graded algebra denoted by $\mathscr{G}_n(q)$ as follows. As a vector space $\mathscr{G}_n(q)=\oplus_{i\geq 0}(\mathcal{K}_{i}/\mathcal{K}_{i-1})$ where we set $\mathcal{K}_{-1}=0$ and the multiplication satisfies $(x+\mathcal{K}_{i-1})(y+\mathcal{K}_{j-1})=xy+\mathcal{K}_{i+j-1}$ for $x\in \mathcal{K}_i, y\in\mathcal{K}_j$ and $i,j\geq 0$.
Meanwhile, introduce a graded associative $\mathbb{Z}$-algebra $\mathscr{G}(q)$ with a basis given by the symbols $K_\bla$ indexed by $\bla\in\mP(\Phi)$,
and with multiplication
given by
\begin{equation}
  \label{eq:KKaK}
K_\bla K_\bmu=\sum_{\|\bnu\|=\|\bla\|+\|\bmu\|}a^\bnu_{\bla\bmu}K_\bnu.
\end{equation}
Note $K_\emptyset$ is the unit of $\mathscr{G}(q)$.
The following summarizes the above discussions.

\begin{thm} \label{thm:stable}
The graded $\mathbb{Z}$-algebra $\mathscr{G}_n(q)$ has the multiplication given by
$$
K_\bla(n) K_\bmu(n)=\sum_{\|\bnu\|=\|\bla\|+\|\bmu\|}a^\bnu_{\bla\bmu}K_\bnu(n),
$$
for $\bla,\bmu\in\mP(\Phi)$. 
Moreover, we have a surjective algebra homomorphism $\mathscr{G}(q) \twoheadrightarrow\mathscr{G}_n(q)$
for each $n$, which maps $K_\bla$ to $K_\bla(n)$ for all $\bla\in\mP(\Phi)$.
\end{thm}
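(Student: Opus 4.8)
The plan is to deduce both assertions directly from Theorem~\ref{thm:indep}, treating them as structural consequences of the filtration rather than as new computations. Everything rests on the observation that passing to the associated graded kills precisely the lower-order terms, and that the surviving top-degree coefficients are exactly the stable ones.

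First I would establish the multiplication formula in $\mathscr{G}_n(q)$. By Lemma~\ref{lem:length-ineq}(2) the product $K_\bla(n)K_\bmu(n)$ in $\ZZ_n(q)$ lies in $\mathcal{K}_{\|\bla\|+\|\bmu\|}$, so the degree $\|\bla\|$ is additive along the filtration and $\mathscr{G}_n(q)$ is a genuine graded algebra. Writing the product as in \eqref{eq:a(n)}, every term $a^\bnu_{\bla\bmu}(n)K_\bnu(n)$ with $\|\bnu\|<\|\bla\|+\|\bmu\|$ lies in $\mathcal{K}_{\|\bla\|+\|\bmu\|-1}$ and hence is annihilated on passing to the associated graded. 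By the defining rule of the graded product, the image of $K_\bla(n)$ times the image of $K_\bmu(n)$ in $\mathscr{G}_n(q)$ therefore equals $\sum_{\|\bnu\|=\|\bla\|+\|\bmu\|}a^\bnu_{\bla\bmu}(n)K_\bnu(n)$, and Theorem~\ref{thm:indep} identifies each surviving coefficient with the $n$-independent integer $a^\bnu_{\bla\bmu}$. Abusing notation to write $K_\bla(n)$ for its image in $\mathscr{G}_n(q)$, this is exactly the asserted formula.

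Next I would verify that $\mathscr{G}(q)$ is a genuine associative $\mathbb{Z}$-algebra, which is what makes the phrase ``algebra homomorphism'' meaningful. Expanding $(K_\bla K_\bmu)K_\bnu$ and $K_\bla(K_\bmu K_\bnu)$ via \eqref{eq:KKaK}, associativity amounts to the identity $\sum_{\boldsymbol{\sigma}}a^{\boldsymbol{\sigma}}_{\bla\bmu}\,a^{\boldsymbol{\rho}}_{\boldsymbol{\sigma}\bnu}=\sum_{\boldsymbol{\tau}}a^{\boldsymbol{\tau}}_{\bmu\bnu}\,a^{\boldsymbol{\rho}}_{\bla\boldsymbol{\tau}}$ for all $\boldsymbol{\rho}$ with $\|\boldsymbol{\rho}\|=\|\bla\|+\|\bmu\|+\|\bnu\|$. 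I would obtain this by transporting associativity down from $\mathscr{G}_n(q)$, which is associative as the associated graded of the associative ring $\ZZ_n(q)$: by the formula of the previous paragraph, the two sides of this identity arise as the coefficient of $K_{\boldsymbol{\rho}}(n)$ in the two bracketings of $K_\bla(n)K_\bmu(n)K_\bnu(n)$, which agree in $\mathscr{G}_n(q)$. For each fixed degree $d=\|\bla\|+\|\bmu\|+\|\bnu\|$ there are only finitely many $\boldsymbol{\rho}\in\mP(\Phi)$ with $\|\boldsymbol{\rho}\|=d$, and for $n\geq d+\max_{\boldsymbol{\rho}}\ell(\boldsymbol{\rho}^e)$ all the corresponding classes $K_{\boldsymbol{\rho}}(n)$ are nonzero and linearly independent in $\mathscr{G}_n(q)$; comparing coefficients then yields the identity, proving $\mathscr{G}(q)$ is associative. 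Then I would assemble the homomorphism: define $\phi_n\colon\mathscr{G}(q)\to\mathscr{G}_n(q)$ on basis elements by $K_\bla\mapsto K_\bla(n)$ and extend $\mathbb{Z}$-linearly; it is graded by construction, surjective because by Lemma~\ref{lem:Kbmu} the nonzero images $K_\bla(n)$ (those with $\|\bla\|+\ell(\bla^e)\leq n$) form a basis of $\mathscr{G}_n(q)$, and multiplicative because $\phi_n(K_\bla K_\bmu)=\sum_{\|\bnu\|=\|\bla\|+\|\bmu\|}a^\bnu_{\bla\bmu}K_\bnu(n)$ coincides with $\phi_n(K_\bla)\phi_n(K_\bmu)=K_\bla(n)K_\bmu(n)$ by the first paragraph.

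I expect the only genuine point to be the well-definedness of $\mathscr{G}(q)$ in the middle step: one cannot merely invoke associativity of a single $\mathscr{G}_n(q)$, since $\phi_n$ has a large kernel (many $K_\bla(n)$ vanish for small $n$), so it is precisely the stabilization of Theorem~\ref{thm:indep} combined with the eventual linear independence of the finitely many fixed-degree classes that lets one pull the associativity relation back to the universal level. Everything else is the routine bookkeeping of associated graded algebras.
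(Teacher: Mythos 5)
Your proposal is correct and follows essentially the same route as the paper, which offers no separate argument for Theorem~\ref{thm:stable} beyond ``the following summarizes the above discussions'': the multiplication formula is exactly the passage to the associated graded of \eqref{eq:a(n)} combined with Theorem~\ref{thm:indep}, and the surjection $\mathscr{G}(q)\twoheadrightarrow\mathscr{G}_n(q)$ is defined on the class-sum basis just as you do. Your explicit verification that $\mathscr{G}(q)$ is associative --- pulling the identity $\sum_{\boldsymbol{\sigma}}a^{\boldsymbol{\sigma}}_{\bla\bmu}a^{\boldsymbol{\rho}}_{\boldsymbol{\sigma}\bnu}=\sum_{\boldsymbol{\tau}}a^{\boldsymbol{\tau}}_{\bmu\bnu}a^{\boldsymbol{\rho}}_{\bla\boldsymbol{\tau}}$ back from $\mathscr{G}_n(q)$ for $n$ large relative to the fixed degree, where the relevant classes are nonzero and independent --- correctly supplies a detail the paper asserts implicitly when it calls $\mathscr{G}(q)$ a graded associative algebra.
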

We will refer to $\mathscr{G}(q)$ as the {\em stable center} associated to the family of finite general linear groups.
This algebra can be viewed as the inverse limit of the projective system of algebras $\{\mathscr{G}_n(q)\}_{n\ge 1}$.

\begin{rem}\label{rem:Meliot}
Theorems~\ref{thm:indep} and \ref{thm:stable} are the finite general linear group counterparts of
analogous results for symmetric groups in \cite{FH59} and for wreath products in \cite{W04}.
It is shown by M\'eliot \cite{M14} that the structure constants $a_{\bla\bmu}^\bnu (n)$ for the center $\ZZ_n$ defined in \eqref{eq:a(n)} are polynomials in $q^n$, an analogue of another theorem of Farahat-Higmann for symmetric groups \cite{FH59}.
However, we have difficulties in verifying some key details in M\'eliot's approach such as the equivalence between the assertions 2 and 3 in \cite[Definition 2.3]{M14} as well as the proof of \cite[Lemma 2.21]{M14}.
\end{rem}

\section{Computations in the stable center}
 \label{sec:computation}

In this section, we compute various structure constants $a_{\bla\bmu}^\bnu$ in Theorems~\ref{thm:indep} and \ref{thm:stable}. 

\subsection{Multiplication of class sums of reflections }

For $r\ge 1$ and $f\in\Phi$, we define the {\em single cycles} $(r)_f \in \mP(\Phi)$
by letting $(r)_f(f)=(r)$ and $(r)_f(f')=\emptyset$ for $f'\neq f$.
Call $(r)_f$ a $r$-cycle of degree $d(f)$.
Denote by $\Fq^*= \Fq\backslash \{0\}$.

We shall compute the structure constants arising in the product of class sums of reflections in $G_n$. We proceed in three separate cases,
depending on the number of unipotent class sums involved in the multiplication; see Lemmas~\ref{lem:xi=1}--\ref{lem:xi-not-1} below.
\begin{lem}\label{lem:xi=1}
Suppose $\xi',\eta'\in\Fq^*$ and $\eta \in \Fq \backslash \{0,1\}$.
Let $\bla=(1)_{t-1},\bmu=(1)_{t-\eta}$ and $f=t^2+a_2t+a_1\in\Phi$.
Then
\begin{align*}
a^{(1)_{t-\xi'}\cup(1)_{t-\eta'}}_{\bla\bmu}&=\left\{
\begin{array}{cc}
q-1, & \text{ if }\xi'\eta'=\eta, \xi'\neq\eta', \\
2q-1, &\text{ if } \{\xi',\eta'\}=\{1,\eta\},\\
0,& \text{ otherwise},
\end{array}
\right.\\
a^{(2)_{t-\xi'}}_{\bla\bmu}&=\left\{
\begin{array}{cc}
q,&\text{if } \xi'^2=\eta,\\
0,& \text{ otherwise},
\end{array}
\right.\\
a^{(1)_f}_{\bla\bmu}&=\left\{
\begin{array}{cc}
q+1, & \text{ if }a_1=\eta,\\
0,& \text{otherwise}.
\end{array}
\right.
\end{align*}
\end{lem}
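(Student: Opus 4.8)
The plan is to compute each structure constant $a^\bnu_{\bla\bmu}$ directly from the orbit-counting formula established in the proof of Theorem~\ref{thm:indep}. Concretely, since $\bla=(1)_{t-1}$ and $\bmu=(1)_{t-\eta}$ both have $\|\bla\|=\|\bmu\|=1$, every target $\bnu$ listed has $\|\bnu\|=2=\|\bla\|+\|\bmu\|$, so Theorem~\ref{thm:indep} applies and the constants are $n$-independent. I would therefore fix $n$ large (say $n=k=\|\bnu\|+\ell(\bnu^e)$, which is at most $3$ in these cases) and count directly: $a^\bnu_{\bla\bmu}$ equals the number of pairs $(g,h)$ with $g\in\mathscr K_\bla(n)$, $h\in\mathscr K_\bmu(n)$, and $gh=w$ for one fixed representative $w\in\mathscr K_\bnu(n)$. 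Here $g$ is a unipotent reflection (a transvection, modified type $(1)_{t-1}$) and $h$ is a semisimple reflection with nontrivial eigenvalue $\eta$ (modified type $(1)_{t-\eta}$).

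First I would parametrize the two reflections explicitly. A transvection $g$ fixing a hyperplane can be written as $g = I + v\,\phi^\intercal$ where $\phi$ is a nonzero linear functional and $v\in\ker\phi$, $v\neq0$, with the pair $(v,\phi)$ determined up to the obvious scaling $v\mapsto cv$, $\phi\mapsto c^{-1}\phi$. The $\eta$-reflection $h$ is diagonalizable with eigenvalue $\eta$ on a line and $1$ on a complementary hyperplane, so $h = I + (\eta-1)\,w\,\psi^\intercal$ with $\psi(w)=1$. Then $gh$ acts as the identity on $\ker\phi\cap\ker\psi$, which has codimension $2$ (when $\phi,\psi$ are independent, the generic case), so $gh$ is governed by its action on a $2$-dimensional quotient. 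The heart of the computation is to determine the $2\times2$ matrix of $gh$ on this plane as a function of the data $(v,\phi,w,\psi)$, read off its conjugacy type (its eigenvalues $\xi',\eta'$, or whether it is a single Jordan block, or whether its characteristic polynomial is the irreducible $f$), and count how many configurations yield each prescribed type.

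The key computation reduces to a $2\times 2$ linear-algebra problem: on the relevant plane, $g$ is a transvection and $h$ is an $\eta$-scaling, and I expect the trace and determinant of $gh$ to come out as $\det(gh)=\eta$ (forcing $\xi'\eta'=\eta$, $\xi'^2=\eta$, or $a_1=\eta$ respectively, explaining all the determinant conditions in the three formulas) with the trace varying over the remaining free parameter. The counting then amounts to: for each admissible eigenvalue/trace, how many scalings of the defining data $(v,\phi)$ relative to $(w,\psi)$ realize it. I anticipate the factors $q-1$, $2q-1$, $q$, $q+1$ will emerge as follows: fixing the pair of eigenvalues $\{\xi',\eta'\}$ with $\xi'\neq\eta'$ leaves a $(q-1)$-fold choice from the transvection's scaling parameter; the coincidence case $\{\xi',\eta'\}=\{1,\eta\}$ merges two such families plus a degenerate contribution, giving $2q-1$; the Jordan (repeated-root) case $\xi'^2=\eta$ gives $q$; and the irreducible case $(1)_f$ gains the extra $+1$ because the two roots of $f$ are Galois-conjugate rather than distinct in $\Fq$, removing the ordered/unordered discrepancy. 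I would verify these counts by carefully tracking the scaling equivalence $(v,\phi)\sim(cv,c^{-1}\phi)$ and the requirement $v\neq0$, $\phi\neq0$.

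The main obstacle will be the careful bookkeeping in the boundary and coincidence cases, rather than any single hard idea. In particular, the case $\{\xi',\eta'\}=\{1,\eta\}$ yielding $2q-1$ rather than $2q-2$ requires identifying the exceptional configurations where $\phi$ and $\psi$ fail to be independent (so $gh$ has a fixed hyperplane but the plane picture degenerates) and checking these contribute exactly one extra pair; symmetrically, the transition between the distinct-eigenvalue count $q-1$ and the Jordan count $q$ must be handled so that no pair is double-counted or dropped at $\xi'^2=\eta$. The cleanest way to control all of this uniformly is to set up a single explicit $\Fq$-parametrization of the pairs $(g,h)$ with $gh$ of the given reduced form and then partition the parameter space according to the characteristic polynomial of the $2\times2$ block; the three formulas then drop out by counting the fibers. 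I would cross-check the final numbers against the global identity $\sum_\bnu a^\bnu_{\bla\bmu}|\mathscr K_\bnu(n)| = |\mathscr K_\bla(n)|\cdot|\mathscr K_\bmu(n)|$ to guard against arithmetic slips.
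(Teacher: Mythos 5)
Your overall route coincides with the paper's: both reduce, via Theorem~\ref{thm:indep}, to counting factorizations $w=gh$ of a single fixed canonical representative $w$ of each target class in $G_k$ with $k\le 3$, followed by explicit linear algebra case by case. (The paper solves entry-wise trace/determinant/rank equations for the unknown factor; your rank-one parametrization $g=I+v\phi^\intercal$, $h=I+(\eta-1)u\psi^\intercal$ is a cosmetic variant of the same computation.) Your determinant constraint $\det(gh)=\eta$ and the partition by conjugacy type of the induced map on the $2$-dimensional quotient are correct, with one caveat you gesture at but should make explicit: the characteristic polynomial of the $2\times 2$ block does not separate $(2)_{t-\xi'}$ from $(1^2)_{t-\xi'}$; it is the condition $\mathrm{rank}(gh-I)=2$ that forces the Jordan block and gives $a^{(1^2)_{t-\xi'}}_{\bla\bmu}=0$, which the paper verifies separately inside its Case~(1).

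The genuine error is your anticipated mechanism for the value $2q-1$ when $\{\xi',\eta'\}=\{1,\eta\}$. Configurations with $\phi$ and $\psi$ proportional contribute nothing at all: if $\phi=c\psi$, then using $\psi^\intercal h=\eta\,\psi^\intercal$ one gets $gh-I=\big((\eta-1)u+c\eta v\big)\psi^\intercal$, of rank at most $1$, so $gh$ has reflection length $\le 1$ and can never lie in a class with $\|\bnu\|=2$; the number of such ``extra pairs'' is $0$, not $1$. Moreover this subcase cannot be handled by a $G_2$ plane picture plus a one-point correction: since $\bnu^e=(1)$, here $k=\|\bnu\|+\ell(\bnu^e)=3$, the representative is $J_2(1)\oplus(\eta)$ in $G_3$ with only a one-dimensional fixed space, and a direct count (in which every solution has $\phi,\psi$ independent) splits as $q+(q-1)=2q-1$ --- the excess over the generic $q-1$ is $q$, not $1$. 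This is exactly why the paper does not settle this subcase within Lemma~\ref{lem:xi=1} but defers it to Proposition~\ref{prop:a-union} (the case $\xi_1=1$, $d=2$, a $3\times 3$ rank computation). Your Galois-conjugacy heuristic for the $+1$ in the answer $q+1$ is likewise not how that count actually arises (the paper fixes the transvection rather than the product and renormalizes: $a^{(1)_f}_{\bla\bmu}=(q^2-1)\cdot q/(q^2-q)$); since you plan to verify everything by explicit fiber counting together with the mass identity $\sum_\bnu a^\bnu_{\bla\bmu}|\mathscr{K}_\bnu(n)|=|\mathscr{K}_\bla(n)|\cdot|\mathscr{K}_\bmu(n)|$, that heuristic is harmless, but the $\{1,\eta\}$ bookkeeping would have to be redone from scratch in $G_3$ along the lines above.
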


\begin{proof}
We separate the proof into several cases.

(1). Let us compute $a^{(1)_{t-\xi'}\cup(1)_{t-\eta'}}_{\bla\bmu}$.
Clearly $a^{(1)_{t-\xi'}\cup(1)_{t-\eta'}}_{\bla\bmu}=0$ if $\xi'\eta'\neq \eta$.
Assume $\xi'\eta'=\eta$. We first consider the case $\{\xi',\eta'\}\neq \{1,\eta\}$.
Let
\[
\Gamma_1=\bigg\{A~\Big|~ A\sim\begin{bmatrix} \eta^{-1} &0\\ 0&1\end{bmatrix},
\quad
 \begin{bmatrix} \xi'&0\\ 0&\eta'\end{bmatrix}A\sim \begin{bmatrix} 1 &1\\0&1\end{bmatrix}\bigg\}.
\]
Then $a^{(1)_{t-\xi'}\cup(1)_{t-\eta'}}_{\bla\bmu}=|\Gamma_1|$.
Observe that $A=\begin{bmatrix} a_{11} & a_{12}\\ a_{21} &a_{22}\end{bmatrix}$ belongs to $\Gamma_1$
if and only if the following holds:
\begin{align}
      \label{eq:Gamma-1}
\begin{split}
a_{11}+a_{22}  =\eta^{-1}+1, &\qquad a_{11}a_{22}-a_{12}a_{21}=\eta^{-1},
\\
 \xi'a_{11}+\eta'a_{22} =2, & \qquad a_{12}\neq 0 \; \text{ or } \; a_{21}\neq 0.
\end{split}
\end{align}
Hence if $\xi'=\eta'$ then $a^{(1^2)_{t-\xi'}}_{\bla\bmu}=|\Gamma_1| =0$.
Assume $\xi'\neq \eta'$.
Since $\xi',\eta'\neq 1$, a direct calculation shows that \eqref{eq:Gamma-1} is equivalent to
\begin{align*}
a_{11}&=\frac{2-\eta'(\eta^{-1}+1)}{\xi'-\eta'},\quad a_{22}=\frac{\xi'(\eta^{-1}+1)-2}{\xi'-\eta'},  \\
a_{12}a_{21}&=\frac{-\eta^{-1}((\xi'+\eta')-(\eta+1))^2}{(\xi'-\eta')^2}.
\end{align*}
Hence $a^{(1)_{t-\xi'}\cup(1)_{t-\eta'}}_{\bla\bmu}=|\Gamma_1|=q-1$ since $\xi'+\eta'\neq\eta+1$.

In the situation that $\{\xi',\eta'\} =\{1,\eta\}$, it is a special case of Proposition \ref{prop:a-union}
in the subsequent section, which includes a detailed proof.

\vspace{2mm}
(2). We now compute $a^{(2)_{t-\xi'}}_{\bla\bmu}$. Note $a^{(2)_{t-\xi'}}_{\bla\bmu}=0$ unless $(\xi')^2=\eta$. Assume $(\xi')^2=\eta$ and then clearly $\xi'\neq 1$.
Set
\[
\Gamma_3=\Big\{A ~\Big|~ A\sim\begin{bmatrix} \eta^{-1} &0\\ 0&1\end{bmatrix}, \begin{bmatrix} \xi'&1\\ 0&\xi'\end{bmatrix}A\sim \begin{bmatrix} 1 &1\\0&1\end{bmatrix}\Big\}.
\]
Then $a^{(2)_{t-\xi'}}_{\bla\bmu}=|\Gamma_3|$, and $A=\begin{bmatrix} a_{11} & a_{12}\\ a_{21} & a_{22}\end{bmatrix}$ belongs to $\Gamma_3$ if and only if the following holds:
\begin{align}
  \label{eq:Gamma-3}
  \begin{split}
a_{11}+a_{22} &=\eta^{-1}+1,\quad a_{11}a_{22}-a_{12}a_{21}=\eta^{-1}, \\
 \xi'a_{11}+\xi'a_{22}+a_{21} &=2,\quad
a_{21}\neq 0 \; \text{ or } \; \xi'a_{12}+a_{22}\neq 0.
\end{split}
\end{align}
Using $(\xi')^2=\eta$, a direct calculation shows that \eqref{eq:Gamma-3} is equivalent to
$$
a_{22}=(\eta^{-1}+1)-a_{11},\quad a_{21}=2-\xi'(\eta^{-1}+1),\quad a_{12}=\frac{a_{11} \big((\eta^{-1}+1)-a_{11} \big)-\eta^{-1}}{2-\xi'(\eta^{-1}+1)}.
$$
Hence the number of solutions to \eqref{eq:Gamma-3}
is $q$, which implies $a^{(2)_{t-\xi'}}_{\bla\bmu}=q$.

\vspace{2mm}
(3). Suppose $f(t)=t^2+a_2t+a_1\in\Phi$.
Let $\bnu=(1)_f$ and
\[
\Gamma_4=\bigg\{B~\Big|~ B\sim\begin{bmatrix} \eta &0\\ 0&1\end{bmatrix},
\quad
\det\Big(tI_2-\begin{bmatrix} 1&1\\ 0&1\end{bmatrix}B\Big)=f(t) \bigg\}.
\]
Since $f(t)$ is irreducible, we observe that a matrix $C\in G_2$ is conjugate to $J(f)$ if and only if the characteristic polynomial of $C$ is $f(t)$, and so
\[
\Gamma_4=\bigg\{B~\Big|~ B\sim\begin{bmatrix} \eta &0\\ 0&1\end{bmatrix},
\quad
\begin{bmatrix} 1&1\\ 0&1\end{bmatrix}B\sim J(f)\bigg\}.
\]
Therefore we have
\begin{equation}\label{eq:a1f-2}
a^{(1)_f}_{\bla\bmu}=\frac{|\mathscr{K}_\bla\cap G_2|\cdot|\Gamma_4|}{|\mathscr{K}_\bnu\cap G_2|}
=\frac{(q^2-1)\cdot|\Gamma_4|}{q^2-q}.
\end{equation}

Clearly $B=\begin{bmatrix} b_{11} & b_{12}\\ b_{21} &b_{22}\end{bmatrix}$ belongs to $\Gamma_4$
if and only if the following holds:
\begin{equation}\label{eq:Gamma-4}
b_{11}+b_{22}=\eta+1,\quad b_{11}b_{22}-b_{12}b_{21}=\eta, \quad  b_{11}+b_{21}+ b_{22}=-a_2, \quad b_{11}b_{22}-b_{12}b_{21}=a_1.
\end{equation}
So $a^{(1)_f}_{\bla\bmu}=0$ unless $a_1=\eta$.
Now suppose $a_1=\eta$.
Since $f(t)=t^2+a_2t+\eta$ is irreducible, we have $a_2+(\eta+1)\neq 0$, and hence \eqref{eq:Gamma-4} is equivalent
\[
b_{22}=(\eta+1)-b_{11},\quad b_{21}=-a_2-(\eta+1),\quad b_{12}=\frac{(b_{11}-1)(b_{11}-\eta)}{a_2+(\eta+1)}.
\]
This implies $|\Gamma_4|=q$. Hence by \eqref{eq:a1f-2} we obtain $a^{(1)_f}_{\bla\bmu}=q+1$.

The lemma is proved.
\end{proof}
\begin{lem}\label{lem:xi=eta=1}
Let $\bla=(1)_{t-1}$.
Suppose $\xi,\eta\in\Fq^*$ and $f=t^2+a_2t+a_1\in\Phi$.
Then
\begin{align*}
a^{(1)_{t-\xi}\cup(1)_{t-\eta}}_{\bla\bla}&=\left\{
\begin{array}{cc}
q-1, & \text{ if }\xi\eta=1, \xi\neq 1, \\
q^2+q,&\text{ if }\xi=\eta=1,\\
0,& \text{otherwise},
\end{array}
\right.\\
a^{(2)_{t-\xi}}_{\bla\bla}&=\left\{
\begin{array}{cc}
q,&\text{ if } \xi^2=1,\xi\neq 1,\\
2q, &\text{ if }\xi=1,\\
0,& \text{otherwise},
\end{array}
\right.\\
a^{(1)_f}_{\bla\bla}&=\left\{
\begin{array}{cc}
q+1, & \text{ if }a_1=1,\\
0,& \text{otherwise}.
\end{array}
\right.
\end{align*}
\end{lem}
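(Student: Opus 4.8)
The plan is to evaluate each constant $a^\bnu_{\bla\bla}$ by the usual class-algebra count: fix a representative $w$ of $\mathscr K_\bnu$ inside $G_k$ with $k=\|\bnu\|+\ell(\bnu^e)$, so that
\[
a^\bnu_{\bla\bla}=\big|\{(g,h)\mid g,h\in \mathscr K_{(1)_{t-1}}\cap G_k,\ gh=w\}\big|,
\]
which by Theorem~\ref{thm:indep} is the stable structure constant (it is independent of $n\ge k$). The class $\bla=(1)_{t-1}$ is exactly the class of unipotent reflections $g=I+N$ with $N$ rank-one nilpotent. Setting $A=h^{-1}$ (again a transvection, since the inverse of a transvection is a transvection) replaces $gh=w$ by the two conjugacy conditions $A\sim\begin{bmatrix}1&1\\0&1\end{bmatrix}$ and $wA\sim\begin{bmatrix}1&1\\0&1\end{bmatrix}$, so $a^\bnu_{\bla\bla}$ counts such $A$. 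Taking determinants of $wA$ and using $\det A=1$ shows at once that the constant vanishes unless $\det w=1$; since $\det w$ equals $\xi\eta$, $\xi^2$, and $a_1$ in the three families, this is precisely the stated necessary condition.

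First I would dispose of the cases with $k=2$: the split target $(1)_{t-\xi}\cup(1)_{t-\eta}$ (so $\xi\neq\eta$), the target $(2)_{t-\xi}$ with $\xi=-1$, and $(1)_f$. These run exactly parallel to the sets $\Gamma_1,\Gamma_3,\Gamma_4$ in the proof of Lemma~\ref{lem:xi=1}, the only change being that the semisimple second factor there is replaced by the unipotent $\begin{bmatrix}1&1\\0&1\end{bmatrix}$. Writing $A=\begin{bmatrix}a_{11}&a_{12}\\a_{21}&a_{22}\end{bmatrix}$, the two trace conditions $a_{11}+a_{22}=2$ and $\operatorname{tr}(wA)=2$ are independent linear equations (independence in the first family is exactly $\xi\neq\eta$), pinning down the diagonal and one off-diagonal entry; the remaining relation $a_{12}a_{21}=a_{11}a_{22}-1$ then has $q-1$ solutions in the split case and $q$ in the single-Jordan case, after excluding the central matrix. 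For $(1)_f$ the same elimination leaves a nondegenerate conic, contributing $q+1$. This reproduces the values $q-1$, $q$, $q+1$.

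The substance lies in the two unipotent targets with $k>2$, where the block reduction is unavailable: $(2)_{t-1}$ (so $\xi=1$, $k=3$, value $2q$) and $(1^2)_{t-1}$ (so $\xi=\eta=1$, $k=4$, value $q^2+q$). For these I would parametrize a transvection as $g=I+uv^\intercal$ with $u,v\in\Fq^k$, $v^\intercal u=0$ and $u,v\neq 0$, modulo the scaling $(u,v)\mapsto(cu,c^{-1}v)$ for $c\in\Fq^*$. Then $g^{-1}=I-uv^\intercal$, and with $W=w-I$ the second factor satisfies $h-I=g^{-1}w-I=W-u\,(v^\intercal w)$. The requirement that $h$ be a transvection becomes: $W-u\,(v^\intercal w)$ has rank one and trace zero. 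Using $v^\intercal u=0$, the trace condition simplifies to the single bilinear equation $v^\intercal W u=0$, while the rank-one condition, given $\operatorname{rank}W=2$, is the simultaneous vanishing of all $2\times2$ minors of $W-u\,(v^\intercal w)$. I would solve this system explicitly and then divide out by the $\Fq^*$-scaling.

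The main obstacle is the $G_4$ computation for $(1^2)_{t-1}$. There $W$ is a rank-two nilpotent with $W^2=0$, so $\operatorname{im}W=\ker W$ is a single two-dimensional subspace, and the rank-drop locus $\{\operatorname{rank}(W-u\,(v^\intercal w))=1\}$ splits into several strata according to the incidence of $u$ and of the functional $v^\intercal w$ with $\operatorname{im}W$ and $\ker W$. Counting these strata correctly—while discarding spurious solutions where $h-I$ fails to be nilpotent or where $h=I$, and while respecting the quotient by the scaling—is the delicate bookkeeping that produces the leading term $q^2$ and total $q^2+q$. The $(2)_{t-1}$ case, where $W$ is regular nilpotent ($W^2\neq0$), is structurally the same but shorter; I would treat it first as a warm-up and expect the minor analysis to yield $2q$.
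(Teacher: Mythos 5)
Your framework is exactly the paper's: fix a representative $w$ of $\bnu$ inside $G_k$, set $A=h^{-1}$, and count matrices $A$ subject to the two similarity constraints $A\sim\begin{bmatrix}1&1\\0&1\end{bmatrix}$ and $wA\sim\begin{bmatrix}1&1\\0&1\end{bmatrix}$ --- this is precisely the paper's set $\Gamma_5$ in the one case it works out in detail, and your $k=2$ computations (split semisimple target, $(2)_{t-\xi}$ with $\xi=-1$, and $(1)_f$) correctly mirror the sets $\Gamma_1,\Gamma_3,\Gamma_4$ from the proof of Lemma~\ref{lem:xi=1}, with the determinant obstruction giving the vanishing statements. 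One loose end in your ``otherwise $0$'' coverage: for odd $q$ the subcase $\xi=\eta=-1$ passes your determinant test (since $\xi\eta=1$) but is silently excluded by your standing hypothesis $\xi\neq\eta$ in the split family; it needs the one-line observation that $w=-I_2$ forces $h=-g^{-1}$, which has both eigenvalues $-1$ and so is not a transvection, whence the constant is $0$. (Read literally, the lemma's first displayed case would assign $q-1$ here; the formulation in Theorem~\ref{thm:2reflections}, which demands $\xi'\neq\eta'$ for the value $q-1$, is the accurate one, and your trace-independence criterion is consistent with it.)

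The genuine gap is that the two computations constituting the new content of this lemma --- $a^{(2)_{t-1}}_{\bla\bla}=2q$ and $a^{(1^2)_{t-1}}_{\bla\bla}=q^2+q$ --- are set up but never carried out. Your reduction is sound: writing $g=I+uv^\intercal$ with $v^\intercal u=0$, the conditions become $v^\intercal W u=0$ and $\operatorname{rank}\bigl(W-u\,(v^\intercal w)\bigr)=1$ with $W=w-I$, to be counted modulo the $\Fq^*$-scaling (and rank one plus trace zero automatically gives a rank-one nilpotent, so your worry about spurious non-unipotent solutions is vacuous). But you then defer the stratified minor analysis and quote the totals from the statement (``expect \dots $2q$,'' ``produces \dots $q^2+q$''). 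These totals are exactly where the lemma deviates from the pattern of Lemma~\ref{lem:xi=1} --- the doubling $q\mapsto 2q$ at $\xi=1$ and the value $q^2+q$ --- so the argument is incomplete precisely at its crux. For comparison, the paper does execute the $G_4$ count: an entrywise analysis of $\Gamma_5$ forces the diagonal of $B$ to be $1$ and most off-diagonal entries to vanish, leaving one free parameter $b_{12}$ and a relation of the shape $b_{14}b_{32}=b_{12}(-1-b_{12})$, whence $2(2q-1)+(q-2)(q-1)=q^2+q$. Your rank-one-update stratification (by the incidence of $u$ and of the covector $v^\intercal w$ with $\operatorname{im}W=\ker W$, resp.\ with $\operatorname{im}W$ for $W$ regular nilpotent) would very plausibly succeed and is arguably cleaner than bare coordinates, but until the strata are enumerated and the quotient by scaling is taken, the proof of the two hallmark values is missing.
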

\begin{proof}
The methods and calculations used in the proof of Lemma ~\ref{lem:xi=1} can also be applied to prove the formulas in the case $\xi=\eta=1$.
To compute $a^{(1^2)_{t-1}}_{\bla\bmu}$,
let
$$
\Gamma_5=\left\{B~\bigg|~ B\sim \begin{bmatrix}1&1&0&0\\0&1&0&0\\0&0&1&0\\0&0&0&1 \end{bmatrix},
\quad
\begin{bmatrix}1&1&0&0\\0&1&0&0\\0&0&1&1\\0&0&0&1 \end{bmatrix}B\sim \begin{bmatrix}1&1&0&0\\0&1&0&0\\0&0&1&0\\0&0&0&1 \end{bmatrix}\right\}.
$$
Write $B=\begin{bmatrix}b_{11}& b_{12}& b_{13} & b_{14} \\ b_{21}& b_{22}& b_{23} & b_{24} \\ b_{31}& b_{32}& b_{33}& b_{34}\\ b_{41} &b_{42}&b_{43}&b_{44}\end{bmatrix}\in G_4$ and
$\widetilde{B}= \begin{bmatrix}1&1&0&0\\0&1&0&0\\0&0&1&1\\0&0&0&1 \end{bmatrix}\cdot B$.
Observe that $B$ belongs to $\Gamma_5$
if and only if the following holds:
\begin{equation}\label{eq:Gamma-5}
b_{11}+b_{22}+b_{33}+b_{44}=4,\quad {\rm rank} (B-I_4)=1,\quad
{\rm rank} (\widetilde{B}-I_4)=1, \quad b_{21}+b_{43}=0.
\end{equation}
A direct calculation shows that \eqref{eq:Gamma-5} is equivalent to
\begin{align*}
b_{11}=b_{22}&=b_{33}=b_{44}=1,\quad b_{13}=b_{21}=b_{23}=b_{24}=b_{31}=b_{41}=b_{42}=b_{43}=0, \\
b_{14}b_{32}&=b_{12}(q-1-b_{12}),\quad b_{34}=q-1-b_{12}.
\end{align*}
This implies $a^{(1^2)_{t-1}}_{\bla\bmu}=|\Gamma_5|=2(2q-1)+(q-2)(q-1)=q^2+q$.

We omit the detailed proofs for the other cases.
\end{proof}

The follow formulas can be proved by using arguments similar to the proof of Lemma \ref{lem:xi=1}. We omit the details which can be found
in arXiv verion 1 of this paper. This coincides with the computation in \cite[Theorem 4.1]{M14}; see however Remark \ref{rem:Meliot}.

\begin{lem}  
 \label{lem:xi-not-1}
Let $\xi,\eta\in\Fq\setminus\{0,1\}$ and $\xi',\eta'\in\Fq^*$.
Let $\bla=(1)_{t-\xi},\bmu=(1)_{t-\eta}$ and $f=t^2+a_2t+a_1\in\Phi$.
Then
\begin{align*}
a^{(1)_{t-\xi'}\cup(1)_{t-\eta'}}_{\bla\bmu}&=\left\{
\begin{array}{cc}
q-1, & \text{ if }\xi'\eta'=\xi\eta, \xi'\neq\eta', \{\xi',\eta'\}\neq\{\xi,\eta\},\\
2q-1,& \text{ if }\xi'\eta'=\xi\eta, \xi'\neq\eta', \{\xi',\eta'\}=\{\xi,\eta\},\\
q^2+q,&\text{ if }\xi'=\eta'=\xi=\eta,\\
0,& \text{ otherwise},
\end{array}
\right.\\
a^{(2)_{t-\xi'}}_{\bla\bmu}&=\left\{
\begin{array}{cc}
q,&\text{if } \xi'^2=\xi\eta, \xi'\notin\{\xi,\eta\}\\
2q, & \text{ if }\xi'=\xi=\eta,\\
0,& \text{ otherwise},
\end{array}
\right.\\
a^{(1)_f}_{\bla\bmu}&=\left\{
\begin{array}{cc}
q+1, & \text{ if }a_1=\xi\eta,\\
0,& \text{ if }a_1\neq \xi\eta.
\end{array}
\right.
\end{align*}
\end{lem}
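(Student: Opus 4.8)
The plan is to mimic the proof of Lemma~\ref{lem:xi=1} line for line, with the unipotent input class $(1)_{t-1}$ there replaced by a second semisimple class $(1)_{t-\xi}$ with $\xi\neq 1$. First I would record the general device that makes every such constant a finite matrix count. Since $\|\bnu\|=\|\bla\|+\|\bmu\|=2$, Theorem~\ref{thm:indep} together with the normal form of Proposition~\ref{prop:length-eq} lets me evaluate each structure constant inside $G_k$ with $k=\|\bnu\|+\ell(\bnu^e)$ as a factorization count
\[
a^\bnu_{\bla\bmu}=\#\big\{(g,h)\in(\mathscr{K}_\bla\cap G_k)\times(\mathscr{K}_\bmu\cap G_k)\mid gh=z_0\big\}
\]
for a fixed representative $z_0$ of $\mathscr{K}_\bnu\cap G_k$ (equivalently, one may fix one factor and divide by a ratio of class sizes, exactly as in the set $\Gamma_4$ of Lemma~\ref{lem:xi=1}). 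Representing $\bla,\bmu$ by $\diag(\xi,1),\diag(\eta,1)$ and substituting $A:=h^{-1}\sim\diag(\eta^{-1},1)$ so that $g=z_0A$, the count becomes $\#\{A\mid A\sim\diag(\eta^{-1},1),\ z_0A\sim\diag(\xi,1)\}$. When $\bnu$ carries no unipotent block (both $\xi',\eta'\neq 1$, or $f$ irreducible of degree $2$) one has $k=2$ and $z_0$ is one of $\diag(\xi',\eta')$, $\big[\begin{smallmatrix}\xi'&1\\0&\xi'\end{smallmatrix}\big]$, $J(f)$; when a unipotent block is forced (e.g. $\xi'=1$) one has $k=3$ and $z_0$ acquires a size-$2$ unipotent Jordan block.

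Next I would convert each conjugacy condition into explicit entry equations. As all relevant eigenvalues are $\neq 1$, membership in a semisimple class with distinct eigenvalues is determined solely by trace and determinant (diagonalizability being automatic), membership in the class of $J(f)$ for irreducible $f$ is again determined by the characteristic polynomial alone, and the Jordan target $\big[\begin{smallmatrix}\xi'&1\\0&\xi'\end{smallmatrix}\big]$ requires in addition a rank-one (non-semisimplicity) condition on $z_0A-\xi' I$. The determinant condition $\det z_0\cdot\eta^{-1}=\xi$, i.e. $\det z_0=\xi\eta$, reads $\xi'\eta'=\xi\eta$ for the first constant, $\xi'^2=\xi\eta$ for the second, and $a_1=\xi\eta$ for the third (since $\det J(f)=a_1$); this is precisely what produces the vanishing outside the stated loci.

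The counting step is then elementary. Writing $A=\big[\begin{smallmatrix}a_{11}&a_{12}\\a_{21}&a_{22}\end{smallmatrix}\big]$, the traces of $A$ and of $z_0A$ give two linear equations in $a_{11},a_{22}$; when $\xi'\neq\eta'$ this system is nonsingular and determines $a_{11},a_{22}$, after which $\det A=\eta^{-1}$ fixes $a_{12}a_{21}=c$ for an explicit constant $c$. The number of $(a_{12},a_{21})$ with $a_{12}a_{21}=c$ is $q-1$ if $c\neq 0$ and $2q-1$ if $c=0$, and a direct substitution (already verified, e.g., for $\xi'=\xi,\eta'=\eta$, where $a_{11}=1,a_{22}=\eta^{-1}$) shows $c=0$ exactly when $\{\xi',\eta'\}=\{\xi,\eta\}$; this yields the $q-1$ versus $2q-1$ dichotomy. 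The Jordan target $(2)_{t-\xi'}$ is handled identically, the extra rank condition pinning the count to $q$. The two genuinely degenerate values are special: for $\xi'=\eta'=\xi=\eta$ the target $z_0=\xi I_2$ is scalar, so $g\sim\diag(\xi,1)$ is automatic once $h\sim\diag(\xi,1)$, and the constant equals the full class size $|GL_2(q)|/(q-1)^2=q^2+q$; the $2q$ value for $\xi'=\xi=\eta$ comes from the corresponding eigenvalue coincidence and is computed exactly as the $2q$-case of Lemma~\ref{lem:xi=eta=1}.

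The main obstacle, as in Lemma~\ref{lem:xi=1}, is the bookkeeping of the degenerate subcases rather than any single hard idea. One must (i) separate the generic locus $\xi'\neq\eta'$, $\{\xi',\eta'\}\neq\{\xi,\eta\}$ from the coincident loci, tracking where diagonalizability fails and a rank condition must be imposed (via Corollary~\ref{cor:invertible} and the $J(f)$-criterion); and (ii) handle the subcases in which an eigenvalue equals $1$, where $\ell(\bnu^e)>0$ forces $k=3$ and a size-$2$ unipotent block in $z_0$, so the computation moves into $G_3$ (a careless $G_2$ count would instead return a lower-order, length-$1$ term and give the wrong answer). Once these loci are correctly delineated, each reduces to the same linear-system-plus-hyperbola count above and the stated values follow.
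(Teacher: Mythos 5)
Your proposal is correct and follows essentially the same route as the paper, which itself omits the details of this lemma with the remark that it is proved ``by arguments similar to the proof of Lemma~\ref{lem:xi=1}'': namely, fixing a representative and reducing each constant to a matrix count of the $\Gamma$-set type, converting conjugacy into trace/determinant/rank conditions (with characteristic polynomials sufficing for distinct eigenvalues or irreducible $f$), and resolving the resulting linear-system-plus-hyperbola counts, with the scalar target $\xi I_2$ giving the class size $q^2+q$ and the ratio-of-class-sizes normalization handling the $J(f)$ case as in $\Gamma_4$. Your explicit flagging of the subcases with $\xi'=1$ or $\eta'=1$, which force $k=3$ and a size-$2$ unipotent block in the normal form (so that a naive $G_2$ count would capture only a lower-order term), correctly identifies the one genuine pitfall in transplanting the Lemma~\ref{lem:xi=1} computation.
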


While the computations to derive the formulas in Lemmas~ \ref{lem:xi=1}-\ref{lem:xi-not-1} have to be carried out separately and the sizes of the matrices involved are different, we find it rather remarkable that these formulas afford a uniform reformulation. This is summarized in the following theorem.

\begin{thm}
 \label{thm:2reflections}
Suppose $\xi,\eta,\xi',\eta'\in\Fq^*$ and $f\in\Phi$ with $\deg f=2$.
Let $\bla=(1)_{t-\xi},\bmu=(1)_{t-\eta}$.
Then $a^\bnu_{\bla\bmu}=0$ if $\det J_{\obnu}\neq \det J_{\obla}\cdot\det J_{\obmu}$ for $\bnu\in\mP(\Phi)$ with $\|\bnu\|=2$.
Otherwise, we have the following complete list:
\begin{align*}
a^{(1)_{t-\xi'}\cup(1)_{t-\eta'}}_{\bla\bmu}&=\left\{
\begin{array}{cc}
q-1, & \text{ if }\xi'\neq\eta', \{\xi',\eta'\}\neq\{\xi,\eta\}, \\
2q-1, & \text{ if }\xi'\neq\eta', \{\xi',\eta'\}=\{\xi,\eta\}, \\
q^2+q,&\text{ if }\xi'=\eta'=\xi=\eta,\\
0,& \text{ otherwise},
\end{array}
\right.\\
a^{(2)_{t-\xi'}}_{\bla\bmu}&=\left\{
\begin{array}{cc}
q,&\text{if } \xi'\notin\{\xi,\eta\},\\
2q,&\text{ if }\xi'\in\{\xi,\eta\},\\
0,& \text{otherwise},
\end{array}
\right.\\
a^{(1)_f}_{\bla\bmu}&=q+1.
\end{align*}
\end{thm}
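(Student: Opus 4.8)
The plan is to deduce the theorem by assembling the three case-by-case computations in Lemmas~\ref{lem:xi=1}, \ref{lem:xi=eta=1} and \ref{lem:xi-not-1}, together with a single general vanishing criterion coming from the determinant. First I would prove the vanishing assertion: if $a^\bnu_{\bla\bmu}\neq 0$ then, by the very definition of the structure constant, there exist $g\in\mathscr{K}_\bla$ and $h\in\mathscr{K}_\bmu$ with $gh\in\mathscr{K}_\bnu$. Since the determinant is a multiplicative class function whose value on a conjugacy class is determined by the type, taking determinants of $gh$ yields $\det J_{\obnu}=\det J_{\obla}\cdot\det J_{\obmu}$. This establishes the first sentence of the theorem and shows that every nonzero constant is supported on the locus $\det J_{\obnu}=\det J_{\obla}\det J_{\obmu}$.

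Next I would make this determinant condition explicit. A direct check gives $\det J_{\obla}=\xi$ and $\det J_{\obmu}=\eta$, so the constraint becomes $\det J_{\obnu}=\xi\eta$. Computing $\det J_{\obnu}$ for each of the three types of $\bnu$ with $\|\bnu\|=2$ produces $\xi'\eta'$ for $\bnu=(1)_{t-\xi'}\cup(1)_{t-\eta'}$, then $\xi'^2$ for $\bnu=(2)_{t-\xi'}$, and $a_1$ (the constant term, equivalently the product of the two roots) for $\bnu=(1)_f$. Hence the determinant condition reads $\xi'\eta'=\xi\eta$, $\xi'^2=\xi\eta$, and $a_1=\xi\eta$ respectively, which are precisely the nonvanishing constraints already recorded in the three lemmas.

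The remaining step is to verify that the uniform formulas agree with those lemmas in every case. I would split according to how $\xi,\eta$ compare to $1$: the case $\xi=\eta=1$ is Lemma~\ref{lem:xi=eta=1}, the case $\xi,\eta\neq 1$ is Lemma~\ref{lem:xi-not-1}, and the case with exactly one of $\xi,\eta$ equal to $1$ is Lemma~\ref{lem:xi=1}, where the subcase $\xi\neq 1=\eta$ follows from the symmetry $a^\bnu_{\bla\bmu}=a^\bnu_{\bmu\bla}$ (valid since $\ZZ_n(q)$ is commutative, and the proposed formulas are manifestly symmetric in $\xi\leftrightarrow\eta$, depending only on $\{\xi,\eta\}$ and $\xi\eta$). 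In each case I would check that imposing the hypothesis on $\xi,\eta$ collapses the theorem's conditions such as $\{\xi',\eta'\}=\{\xi,\eta\}$ or $\xi'\in\{\xi,\eta\}$ onto the corresponding condition in the relevant lemma, and that the numerical values coincide.

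The main obstacle is bookkeeping rather than new computation: I must be careful that the apparently different, and in one place overlapping, case descriptions really do match once the determinant constraint is in force. For instance, in Lemma~\ref{lem:xi=1} the descriptions ``$\xi'\eta'=\eta,\ \xi'\neq\eta'$'' and ``$\{\xi',\eta'\}=\{1,\eta\}$'' are not literally disjoint, and the reformulation requires reading the first as implicitly excluding the second; I would confirm this is the intended mutually exclusive reading. I would also verify that the cases the uniform statement formally permits but which cannot occur under the running hypotheses — most notably the value $q^2+q$, which forces $\xi'=\eta'=\xi=\eta$ and hence $\xi=\eta$ — are consistently assigned the value $0$ by the lemmas in the excluded situations, so that no inconsistency arises at the boundaries between cases.
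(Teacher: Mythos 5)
Your proposal matches the paper's own (implicit) proof: the paper presents Theorem~\ref{thm:2reflections} as a uniform summary of Lemmas~\ref{lem:xi=1}--\ref{lem:xi-not-1}, which is exactly your assembly, with the determinant vanishing criterion following from multiplicativity of $\det$ as a class function (noting $\det J_{\bla^{\uparrow n}}=\det J_{\obla}$ since the added unipotent $1$-parts contribute determinant $1$) and the case $\eta=1\neq\xi$ handled by commutativity of $\ZZ_n(q)$, just as you say. Your bookkeeping caveats are well taken and genuinely needed --- e.g.\ besides the overlap you note in Lemma~\ref{lem:xi=1}, the first case of Lemma~\ref{lem:xi=eta=1} as literally stated includes $\xi=\eta=-1$, where the correct value is $0$ (a product of two transvections in $G_2$ cannot equal $-I_2$), in agreement with the theorem's ``otherwise'' clause rather than with a literal reading of the lemma.
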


\subsection{Computation on $a^{\bla\cup\bmu}_{\bla\bmu}$}

In the setting of symmetric groups \cite{FH59} and wreath products \cite{W04}, the structure constants $a^{\bla\cup\bmu}_{\bla\bmu}$ are among the easiest to compute.  In our setting, these structure constants are not as straightforward to compute in general. We shall present $a^{\bla\cup\bmu}_{\bla\bmu}$ in some simplest nontrivial cases.

\subsubsection{}
We first compute some cases when $\bla$ is a single $1$-cycle of degree $1$ and $\bmu$ is a disjoint union of $1$-cycles of degree $1$.

\begin{prop}
   \label{prop:a-union}
Suppose $\xi_1,\xi_2,\ldots,\xi_d\in\Fq^*$ and $\xi_i\neq\xi_j$ for $1\leq i\neq j\leq d$.
Let $\bla=(1)_{t-\xi_1},\bmu=(1)_{t-\xi_2}\cup\cdots\cup(1)_{t-\xi_d}$.
Then
\[
a^{\bla\cup\bmu}_{\bla\bmu}=(2q-1)^{d-1}.
\]
\end{prop}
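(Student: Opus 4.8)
The plan is to evaluate $a^{\bla\cup\bmu}_{\bla\bmu}$ directly as a count of factorizations. Write $\bnu=\bla\cup\bmu=\bigcup_{i=1}^d(1)_{t-\xi_i}$, so that $\|\bnu\|=\|\bla\|+\|\bmu\|=d$, and set $k=\|\bnu\|+\ell(\bnu^e)$. By Theorem~\ref{thm:indep} the constant $a^{\bla\cup\bmu}_{\bla\bmu}$ is independent of $n$, so I may compute it at $n=k$ and fix once and for all a representative $w\in\mathscr{K}_\bnu(k)$, concretely $w=J_{\bnu^{\uparrow k}}$. By the standard description of structure constants for a group algebra center, $a^{\bla\cup\bmu}_{\bla\bmu}$ is the number of pairs $(g,h)$ with $g\in\mathscr{K}_\bla(k)$, $h\in\mathscr{K}_\bmu(k)$ and $gh=w$; writing $g=wh^{-1}$, this is the number of $h\in\mathscr{K}_\bmu(k)$ with $wh^{-1}\in\mathscr{K}_\bla(k)$.

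The first step is to convert the membership $wh^{-1}\in\mathscr{K}_\bla(k)$ into a rank condition. Since $\bla=(1)_{t-\xi_1}$, an element $g$ has modified type $\bla$ precisely when $\mathrm{rank}(g-I_k)=1$ and $\det g=\xi_1$: indeed $\mathrm{rank}(g-I_k)=1$ forces $g$ to be a reflection, and then $\det g=\xi_1$ determines its nontrivial eigenvalue, giving a semisimple reflection if $\xi_1\neq1$ and a transvection if $\xi_1=1$. Now $\det(wh^{-1})=\det w/\det h=\xi_1$ holds automatically for every $h$ of type $\bmu$ (both determinants being the products of the eigenvalues prescribed by the types), and $g-I_k=(w-h)h^{-1}$ gives $\mathrm{rank}(g-I_k)=\mathrm{rank}(w-h)$. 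Hence
\[
a^{\bla\cup\bmu}_{\bla\bmu}=\#\big\{h\in\mathscr{K}_\bmu(k)\ \big|\ \mathrm{rank}(w-h)=1\big\}.
\]

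To run the count I would write $h=w-M$ with $M=u\psi^{\intercal}$ of rank one, and impose that $h$ has type $\bmu$. The characteristic polynomial of $h$ is supplied by the matrix determinant lemma,
\[
\det(tI_k-h)=\det(tI_k-w)\big(1+\psi^{\intercal}(tI_k-w)^{-1}u\big),
\]
and matching it against the characteristic polynomial forced by the type $\bmu$ pins down the diagonal entries of $M$ in the basis putting $w$ in canonical form, while leaving the off-diagonal entries free subject only to $\mathrm{rank}\,M=1$. In the principal case where all $\xi_i\neq1$ we have $k=d$ and $w=\diag(\xi_1,\ldots,\xi_d)$ is regular semisimple; evaluating the polynomial identity at each $t=\xi_m$ yields $M_{11}=\xi_1-1$ and $M_{mm}=0$ for $2\le m\le d$. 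Normalizing $u_1=1$ (legitimate as $M_{11}\neq0$) fixes $\psi_1=\xi_1-1$, and for each $m\ge2$ the remaining data is a pair $(u_m,\psi_m)$ subject to $u_m\psi_m=0$, which has exactly $2q-1$ solutions. As the $d-1$ pairs are independent and yield distinct $M$, we obtain $a^{\bla\cup\bmu}_{\bla\bmu}=(2q-1)^{d-1}$.

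The main obstacle is the case where one of the $\xi_i$ equals $1$: then $\bnu^e\neq\emptyset$, $k=d+1$, and $w$ carries a single Jordan block $J_2(t-1)$, so it is no longer semisimple. Here the condition $h\in\mathscr{K}_\bmu(k)$ is not captured by the characteristic polynomial alone---one must also impose the correct Jordan type of $h$ at the eigenvalue $1$ (a single size-two block when $\xi_1=1$, semisimplicity there when some $\xi_j=1$ with $j\ge2$), which amounts to an extra condition on $\mathrm{rank}(h-I_k)$. The same determinant-lemma computation applies, with $(tI_k-w)^{-1}$ now exhibiting a double pole at $t=1$; a direct analysis shows that the rank-one $M$ whose $\psi$ meets the Jordan block nontrivially produce the wrong Jordan form of $h$ and must be discarded, after which the surviving configurations are again governed by $d-1$ independent pairs with vanishing product, giving $(2q-1)^{d-1}$. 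I expect this bookkeeping---delicate rather than conceptually hard---to be the crux, and it is exactly what makes the base cases $d=2$ with $1\in\{\xi_1,\xi_2\}$ (those invoked by Lemma~\ref{lem:xi=1}) worth writing out in full.
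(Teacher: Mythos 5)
Your reduction and your principal case are correct, and in fact cleaner than the paper's: fixing the representative $w=J_{\bnu^{\uparrow k}}$ and counting $h\in\mathscr{K}_\bmu(k)$ with $\mathrm{rank}(w-h)=1$ does compute $a^{\bla\cup\bmu}_{\bla\bmu}$ (your characterization of modified type $(1)_{t-\xi_1}$ by rank one plus determinant is right, as is the observation that the determinant condition is automatic). When $1\notin\{\xi_1,\ldots,\xi_d\}$ the matrix-determinant-lemma argument is complete: the target characteristic polynomial $(t-1)\prod_{i=2}^d(t-\xi_i)$ has $d$ distinct roots and hence determines the class, evaluation at $t=\xi_m$ forces $u_1\psi_1=\xi_1-1$ and $u_m\psi_m=0$ for $m\ge 2$, and since both polynomials are monic of degree $d$ these evaluations are also sufficient; the normalized count then gives $(2q-1)^{d-1}$. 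The paper's Case (1) reaches the same ``$d-1$ pairs with vanishing product'' structure by an entry-by-entry analysis of the rank-one perturbation; your spectral route buys a shorter and more transparent derivation of the diagonal constraints in the regular semisimple case.

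The genuine gap is the degenerate situation $1\in\{\xi_1,\ldots,\xi_d\}$, which you only sketch, and your sketched mechanism is incorrect. Take $d=2$, $\xi_1=1$, $\xi_2=\eta\neq 1$, so $k=3$ and $w=\diag\big(J_2(t-1),\eta\big)$: matching characteristic polynomials gives $\psi_1u_2=0$, $\psi_1u_1+\psi_2u_2=0$, $\psi_3u_3=0$, and imposing the additional requirement $\mathrm{rank}(h-I_3)=1$ (semisimplicity of $h$ at eigenvalue $1$) leaves exactly the solutions with, after normalization, $u=(1,0,u_3)^\intercal$, $\psi=(0,1,\psi_3)^\intercal$, $u_3\psi_3=0$ --- indeed $2q-1$ of them, but \emph{every} surviving $\psi$ is nonzero on the Jordan-block coordinates, contradicting your rule that the rank-one $M$ whose $\psi$ meets the Jordan block must be discarded. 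What actually happens is that the char-poly constraints together with the rank condition eliminate all configurations with $\psi_1\neq 0$, while $\psi_2\neq 0$ is \emph{forced} among the survivors. Your parenthetical also swaps the two subcases: when $\xi_1=1$ the factor $h$ must be semisimple at $1$ (type $(1,1)$ at $t-1$), whereas when some $\xi_j=1$ with $j\geq 2$ it must have type $(2,1)$ at $t-1$. These cases are not peripheral: the paper defers the case $\{\xi',\eta'\}=\{1,\eta\}$ of Lemma~\ref{lem:xi=1} to precisely this proposition, and spends its Cases (2) and (3), each with separate rank bookkeeping in $G_{d+1}$, on them. As written, your proposal proves the proposition only for $1\notin\{\xi_1,\ldots,\xi_d\}$.
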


\begin{proof}
We separate the proof in three cases.

(1). Assume $1\notin\{\xi_1,\xi_2,\ldots,\xi_d\}$.
Let
\[
\Pi=\Big\{A ~\big|~ A\sim\text{diag}\big(\xi_1^{-1},1,\ldots,1\big),
\quad
\text{diag}\big(\xi_1,\ldots,\xi_d\big)A\sim\text{diag}\Big(\xi_2,\ldots,\xi_d,1\Big) \Big\}.
\]
Since $\xi_i\neq 1$ for $1\leq i\leq d$, we have
\begin{equation}\label{eq:a-union}
a^{\bla\cup\bmu}_{\bla\bmu}
=\big|\Pi\big|.
\end{equation}
Suppose $A\in\Pi$.
Write $A=(a_{ij})_{1\leq i,j\leq d}$.
Let $\widetilde{A}=\text{diag}\big(\xi_1,\ldots,\xi_d\big)A$.
Since $A\sim\text{diag}\big(\xi_1^{-1},1,\ldots,1\big)$ and $\widetilde{A}\sim\text{diag}\big(\xi_2,\ldots,\xi_d,1\Big)$, we have
\begin{align}
\text{rank}(A-I_d)&=1,\label{eq:rankA-I}  \\
\quad a_{11}+a_{22}+\cdots+a_{dd}&=\xi_1^{-1}+(d-1),\label{eq:traceA}  \\
 \det(\widetilde{A}-\xi_iI_d)&=0, \quad \text{for }2\leq i\leq d.  \label{eq:det-wA-xiI}
\end{align}

\vspace{2mm}

\noindent {\bf Claim.}
We have $a_{22}=a_{33}=\cdots=a_{dd}=1$ and $a_{11}=\xi_1^{-1}$.

Let us prove the claim by contradiction. Assume $a_{22}\neq 1$. Then by \eqref{eq:rankA-I}, there exist $\al_1,\al_2,\al_3, \ldots,\al_d$ with $\al_2=1$ such that
\begin{equation}\label{eq:aij}
a_{ij}-\delta_{ij}=\al_i (a_{2j}-\delta_{2j}), \quad \text{ for }1\leq i,j\leq d.
\end{equation}
Since $\widetilde{A}=\text{diag}\big(\xi_1,\ldots,\xi_d\big)A$, a direct calculation using \eqref{eq:aij} shows that
$$
\det (\widetilde{A}-\xi_2I_d)=(\xi_1-\xi_2)\xi_2(a_{22}-1)(\xi_3-\xi_2)\cdots(\xi_d-\xi_2)\neq 0
$$
since $\xi_i\neq \xi_j$ for $i\neq j$.
This contradicts \eqref{eq:det-wA-xiI}. Hence $a_{22}=1$. Similarly, we can prove $a_{33}=\cdots=a_{dd}=1$.
Now it follows by \eqref{eq:traceA} that $a_{11}=\xi^{-1}$. The Claim is proved.

Observe that $\xi^{-1}_1\neq 1$. This means the first row of $A-I_d$ is nonzero and then again by \eqref{eq:rankA-I}, each $A\in\Pi$ is of the following form:
\begin{equation}\label{eq:A-generalform}
A=\begin{bmatrix}
\xi_1^{-1}          &   a_2          &      a_3          &  a_4          &  \cdots &  a_d\\
\beta_2(\xi_1^{-1}-1) &   1            &   \beta_2 a_3     &  \beta_2 a_4  &  \cdots &  \beta_2a_d\\
\beta_2(\xi_1^{-1}-1) &   \beta_3a_2   &        1          &  \beta_3 a_4  &  \cdots &  \beta_3a_d\\
\vdots              &    \vdots      &   \vdots          &   \vdots      &  \vdots &   \vdots\\
\beta_d(\xi_1^{-1}-1) &   \beta_da_2   &   \beta_d a_3     &  \beta_d a_4  &  \cdots &  1
\end{bmatrix}
\end{equation}
where $a_2,\ldots,a_d,\beta_2,\ldots,\beta_d\in\Fq$ satisify
\begin{equation}
  \label{eq:ab=0}
 a_i\beta_i=0, \qquad \text{ for }\; 2\leq i\leq d.
\end{equation}

Conversely, let $A\in M_{d\times d}(q)$ be of the form \eqref{eq:A-generalform}. Then we have
$A\sim\text{diag}\big(\xi_1^{-1},1,\ldots,1\big)$. Let $\widetilde{A}=\text{diag}\big(\xi_1,\ldots,\xi_d\big)A$.
A direct computations using \eqref{eq:ab=0} shows that
\begin{equation}\label{eq:det-wA}
\det(\widetilde{A}-\xi_iI_d)=0.
\end{equation}
Observe that the trace of $\widetilde{A}$ is $\text{tr}(\widetilde{A})=1+\xi_2+\ldots+\xi_d$.
This together with \eqref{eq:det-wA} implies $\det(\widetilde{A}-I_d)=0$.
Therefore we have $\widetilde{A}=\text{diag}\big(\xi_1,\ldots,\xi_d\big)A\sim\text{diag}\Big(\xi_2,\ldots,\xi_d,1\Big)$.
Putting these together we obtain
\[
\Pi=\big\{A~\big |~ A \text{ is of the form } \eqref{eq:A-generalform}, \text{ where } a_i,\beta_i\in\Fq \; (2\leq i\leq d)  \text{ satisfy } \eqref{eq:ab=0} \big\}.
\]
Thus, by \eqref{eq:a-union} we have
$a^{\bla\cup\bmu}_{\bla\bmu}=\big|\Pi\big|=(2q-1)^{d-1}.$

(2). Assume $\xi_1=1$.
Set
\[
\small
B=\begin{bmatrix}
 1      & -1  &  0      &  \cdots       & 0                 \\
 0      & 1  &  0      &  \cdots       &0           \\
 0      & 0  &  1      &  \cdots       & 0   \\
        &    &         &   \ddots         &    &                      \\
 0      &   0     & 0      & 0               &1
\end{bmatrix},
C=\begin{bmatrix}
 1      & 1  &  0      &  \cdots       & 0                 \\
 0      & 1  &  0      &  \cdots       &0           \\
 0      & 0  &  \xi_2     &  \cdots       & 0   \\
        &    &         &   \ddots         &    &                      \\
 0      &   0     & 0      & 0               &\xi_d
\end{bmatrix},
D=\begin{bmatrix}
 1      & 0  &  0      &  \cdots       & 0                 \\
 0      & 1  &  0      &  \cdots       &0           \\
 0      & 0  &  \xi_2      &  \cdots       & 0   \\
        &    &         &   \ddots         &    &                      \\
 0      &   0     & 0      & 0               &\xi_d
\end{bmatrix},
\]
where $B,C,D$ are $(d+1)\times (d+1)$-matrices.
Then let
\[
\Pi=\Big\{A ~\big|~ A\sim B,
\quad
CA\sim D \Big\}.
\]
Clearly
$
a_{\bla\bmu}^{\bla\cup\bmu}=|\Pi|.
$
Suppose $A\in\Pi$.
Write $A=(a_{ij})_{1\leq i,j\leq d+1}$.
Let $\widetilde{A}=CA$.
Since $A\sim B$ and $\widetilde{A}\sim D$, we have
\begin{align}
\text{rank}(A-I_{d+1})&=1,\label{eq:rankA-I-2}  \\
\quad a_{11}+a_{22}+\cdots+a_{d+1,d+1}&=d+1,\label{eq:traceA-2}  \\
 \det(\widetilde{A}-\xi_iI_{d+1})&=0, \quad \text{for }2\leq i\leq d,  \label{eq:det-wA-xiI-2}\\
 \text{rank}(\widetilde{A}-I_{d+1})&=d-1,\label{eq:rank-wA-I-2}\\
 a_{11}+a_{21}+a_{22}+\xi_2a_{33}+\cdots+\xi_da_{d+1,d+1}&=\xi_2+\cdots+\xi_d+2.\label{eq:trace-wA-2}
\end{align}
Then by a similar proof of the claim in Case (1),
we can show $a_{33}=a_{44}=\cdots=a_{d+1,d+1}=1$ using \eqref{eq:rankA-I-2} and \eqref{eq:det-wA-xiI-2}
and hence $a_{11}+a_{22}=2$ and $a_{21}=0$ by \eqref{eq:traceA-2} and \eqref{eq:trace-wA-2}. Then using \eqref{eq:rank-wA-I-2} we can deduce that
$a_{31}=a_{41}=\cdots=a_{d+1,1}=0$ and hence $a_{11}=a_{22}=1$ by \eqref{eq:rankA-I-2}. Moreover, one can show  $a_{12}+1=0$ by  \eqref{eq:det-wA-xiI-2} and \eqref{eq:rank-wA-I-2}.
This means the first row of $A-I_{d+1}$ is nonzero and again by \eqref{eq:rankA-I-2}, each $A\in\Pi$ has the form
\begin{equation}\label{eq:A-generalform-2}
A=\begin{bmatrix}
1  &  -1       &  a_3         &  a_4  &    a_5&  \cdots  &  a_{d+1}\\
0  &   1       &  0           &  0    &      0 &\cdots &   0      \\
0  & -\beta_3  &  1           &  \beta_3a_4    &    a_5 &  \cdots   &  \beta_3a_{d+1}\\
0  & -\beta_4   &  \beta_4 a_3 & 1  &  \beta_4 a_5& \cdots & \beta_4 a_{d+1}\\
\vdots & \vdots  &  \vdots   &   \vdots &   \vdots &   \vdots  &\vdots\\
0 &  -\beta_{d+1}  &  \beta_{d+1}a_3  & \beta_{d+1}a_4& \beta_{d+1}a_5& \cdots& 1
\end{bmatrix},
\end{equation}
where $a_3,a_4,\ldots,a_{d+1},\beta_3,\ldots,\beta_{d+1}\in\Fq$ satisfy
\begin{equation}\label{eq:ab=0-2}
a_i\beta_i=0,\quad \text{ for } 3\leq i\leq d+1.
\end{equation}
Again similar to the proof of Case (1), we can show that a matrix $A$ of the form \eqref{eq:A-generalform-2} satisfying \eqref{eq:ab=0-2}
belongs to $\Pi$. Hence we obtain
\[
\Pi=\{A~|~ A\text{ is of the form }\eqref{eq:A-generalform-2}, \text{ where }a_i,\beta_i\in\Fq\; (3\leq i\leq d+1) \text{ satisfy }\eqref{eq:ab=0-2}\}.
\]
Therefore we have $a_{\bla\bmu}^{\bla\cup\bmu}=|\Pi|=(2q-1)^{d-1}$.

(3). Assume $1\in\{\xi_2,\xi_3,\ldots,\xi_d\}$.
Without loss of generality, we can assume $\xi_2=1$.
Set
\[
\small
C=\begin{bmatrix}
 1      & 1  &  0      &  0  &  \cdots       & 0                 \\
 0      & 1  &  0      &   0  &   \cdots       &0           \\
 0      & 0  &  \xi_1     &  0  &   \cdots       & 0   \\
 0      & 0  &   0  &  \xi_3  &  \cdots &   0\\
        &    &         &      &\ddots         &    &                      \\
 0      &   0     & 0      & 0    &0           &\xi_d
\end{bmatrix},\quad
D=\begin{bmatrix}
 1      & 1  &  0      &  0  &  \cdots       & 0                 \\
 0      & 1  &  0      &   0  &   \cdots       &0           \\
 0      & 0  &  1     &  0  &   \cdots       & 0   \\
 0      & 0  &   0  &  \xi_3  &  \cdots &   0\\
        &    &         &      &\ddots         &       \\
 0      &   0     & 0      & 0    &0           &\xi_d
\end{bmatrix}.
\]
Then let
\[
\Pi=\Big\{A ~\big|~ A\sim \text{diag}\big(\xi_1^{-1},1,\ldots,1\big),
\quad
CA\sim D \Big\}.
\]
Clearly $a_{\bla\bmu}^{\bla\cup\bmu}=|\Pi|.$
Suppose $A\in\Pi$.
Write $A=(a_{ij})_{1\leq i,j\leq d+1}$.
Let $\widetilde{A}=CA$.
Since $A\sim \text{diag}\big(\xi_1^{-1},1,\ldots,1\big)$ and $\widetilde{A}\sim D$, we have
\begin{align}
\text{rank}(A-I_{d+1})&=1,\label{eq:rankA-I-3}  \\
\quad a_{11}+a_{22}+\cdots+a_{d+1,d+1}&=\xi_1^{-1}+d,\label{eq:traceA-3}  \\
 \det(\widetilde{A}-\xi_iI_{d+1})&=0, \quad \text{for }3\leq i\leq d,  \label{eq:det-wA-xiI-3}\\
 \text{rank}(\widetilde{A}-I_{d+1})&=d-1,\label{eq:rank-wA-I-3}\\
 a_{11}+a_{21}+a_{22}+\xi_1a_{33}+\xi_3a_{44}+\cdots+\xi_da_{d+1,d+1}&=\xi_3+\cdots+\xi_d+3.\label{eq:trace-wA-3}
\end{align}
Again by a similar proof of the claim in Case (1), we have $a_{44}=a_{55}=\cdots=a_{d+1,d+1}=1$.
Then by \eqref{eq:traceA-3} and \eqref{eq:trace-wA-3} we have
\begin{equation}\label{eq:a11-33}
a_{11}+a_{22}+a_{33}=\xi_1^{-1}+2,\quad a_{11}+a_{21}+a_{22}+\xi_1a_{33}=3,
\end{equation}
We claim $a_{21}=a_{23}=a_{24}=\cdots=a_{2,d+1}=0, a_{22}=1$. Otherwise, the second row of the matrix $A-I_{d+1}$ is nonzero and by \eqref{eq:rankA-I-3} there exist $\alpha_1,\ldots,\alpha_{d+1}$
with $\alpha_2=1$ such that
\begin{equation}\label{eq:aij-3}
a_{ij}-\delta_{ij}=\alpha_i(a_{2j}-\delta_{2j}), \quad \text{ for } 1\leq i,j\leq d+1.
\end{equation}
Then we have
\[
\text{rank}(\widetilde{A}-I_{d+1})
=\text{rank}\begin{bmatrix}
0       &    1        &   0         &  0        &   \cdots   &      0             \\
a_{21}  &  a_{22}-1   & a_{23}      &   a_{24}  &   \cdots   &      a_{2,d+1}     \\
0       &   0         &  \xi_1-1    &   0       &    \cdots  &      0             \\
0       &     0       &   0         & \xi_3-1   &    \cdots  &      0              \\
        &             &             &           &    \ddots  &                     \\
0       &   0         &   0         &   0       &    \cdots  &    \xi_d-1
\end{bmatrix}\geq d,
\]
which contradicts with \eqref{eq:rank-wA-I-3}.
So the claim holds. Then by \eqref{eq:a11-33} we obtain $a_{11}=1$ and $a_{33}=\xi_1^{-1}\neq 1$.
This means the third row of the matrix $A-I_{d+1}$ is nonzero and hence by \eqref{eq:rankA-I-3}, each $A\in\Pi$ has the following form:
\begin{equation}\label{eq:A-generalform-3}
A=\begin{bmatrix}
1      &       \beta_1a_2      &      \beta_1(\xi_1^{-1}-1)    &    \beta_1a_4    &    \cdots    & \beta_1a_{d+1}\\
0      &        1              &      0                        &    0             &     \cdots   &  0            \\
0      &        a_2            &      \xi_1^{-1}               &    a_4           &     \cdots   &  a_{d+1}      \\
0      &        \beta_4 a_2    &      \beta_4(\xi_1^{-1}-1)    &     1            &      \cdots  &  \beta_4a_{d+1}\\
       &                       &                               &                  &      \ddots  &                 \\
0      &      \beta_{d+1}a_2   &     \beta_{d+1}(\xi_1^{-1}-1) &    \beta_{d+1}a_4&     \cdots   &  1
\end{bmatrix},
\end{equation}
where $a_2,a_4,a_5,\cdots,a_{d+1},\beta_1,\beta_4,\beta_5,\ldots,\beta_{d+1}\in\Fq$ such that $\beta_ia_i=0$ for $4\leq i\leq d+1$.
Then using \eqref{eq:rank-wA-I-3} one can deduce that $\beta_1a_2=0$.
Therefore each $A\in\Pi$ has the form \eqref{eq:A-generalform-3} with
\begin{equation}\label{eq:ab=0-3}
\beta_1a_2=0,\quad \beta_ia_i=0,\quad \text{ for }4\leq i\leq d+1.
\end{equation}
Conversely, by a similar argument as for Case (1), we can show that a matrix of the form \eqref{eq:A-generalform-3} satisfying \eqref{eq:ab=0-3} must belong to $\Pi$, and hence
\[
\Pi=\big\{A~|~ A\text{ is of the form }\eqref{eq:A-generalform-3}, \text{ where }a_2,a_i,\beta_1,\beta_i\in\Fq\; (4\leq i\leq d+1) \text{ satisfy }\eqref{eq:ab=0-3} \big\}.
\]
Therefore we have $a_{\bla\bmu}^{\bla\cup\bmu}=|\Pi|=(2q-1)^{d-1}$.
The proposition is proved.
\end{proof}

For $m,b \in \N$, define the $q$-integers, $q$-factorials, and $q$-binomial coefficients
\begin{align}
 \begin{split}
  \label{eq:q-number}
[m]=[m]_q &=\frac{q^m-1}{q-1},
\qquad
[m]! =[m]_q! = [m][m-1]\cdots [1],
\\
\qbinom{m}{b} & =\qbinom{m}{b}_q =\frac{[m][m-1]\cdots [m-b+1]}{[b]!}.
 \end{split}
\end{align}
\begin{prop}\label{prop:a-union-equal}
Let $c,d\geq 1$.
Then
$a^{(1^{c+d})_{t-\xi}}_{(1^c)_{t-\xi}(1^d)_{t-\xi}}=q^{cd}\qbinom{c+d}{c}$,
if $\xi\in\Fq\setminus\{0,1\}$.
\end{prop}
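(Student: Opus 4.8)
The plan is to use stability to pass to the smallest relevant group, and then to observe that the target class is represented by a scalar matrix, which makes the counting essentially trivial. Since $\bnu=\bla\cup\bmu=(1^{c+d})_{t-\xi}$ has empty $(t-1)$-component (as $\xi\neq 1$), we have $\bnu^e=\emptyset$ and $k:=\|\bnu\|+\ell(\bnu^e)=c+d$. By Theorem~\ref{thm:indep}, $a^{\bla\cup\bmu}_{\bla\bmu}$ is independent of $n$, so I would compute it inside $G_{c+d}$. There the class $\mathscr{K}_\bnu(c+d)$ is represented by $J_{\obnu}=\xi I_{c+d}$, while the elements of $\mathscr{K}_\bla(c+d)$ and $\mathscr{K}_\bmu(c+d)$ are the semisimple matrices conjugate to $\diag(\xi I_c,I_d)$ and $\diag(\xi I_d,I_c)$, respectively (eigenvalue $\xi$ with multiplicities $c$ and $d$, and eigenvalue $1$ with multiplicities $d$ and $c$). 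Fixing $g_0=\xi I_{c+d}\in\mathscr{K}_\bnu(c+d)$, the structure constant equals the number of pairs $(a,b)$ with $a\in\mathscr{K}_\bla(c+d)$, $b\in\mathscr{K}_\bmu(c+d)$ and $ab=g_0$.

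The crucial observation is that $g_0=\xi I_{c+d}$ is central, so $ab=g_0$ is equivalent to $b=\xi a^{-1}$. Thus $a\mapsto(a,\xi a^{-1})$ maps $\mathscr{K}_\bla(c+d)$ into the set of admissible pairs, and I would check that the membership $\xi a^{-1}\in\mathscr{K}_\bmu(c+d)$ is automatic: if $a$ is semisimple with eigenvalue $\xi$ of multiplicity $c$ and eigenvalue $1$ of multiplicity $d$, then $\xi a^{-1}$ is semisimple with eigenvalue $\xi\cdot\xi^{-1}=1$ of multiplicity $c$ and eigenvalue $\xi\cdot 1=\xi$ of multiplicity $d$, which is precisely the type of $\mathscr{K}_\bmu(c+d)$. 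Since $\xi\in\Fq\setminus\{0,1\}$ ensures $\xi^{-1}\neq 1$, no collision of eigenvalues can spoil this, and the type is as claimed for all admissible $\xi$. Consequently the count collapses to
\[
a^{\bla\cup\bmu}_{\bla\bmu}=\big|\mathscr{K}_\bla(c+d)\big|.
\]

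Finally I would compute this conjugacy class size. The centralizer of $\diag(\xi I_c,I_d)$ in $G_{c+d}$ is $GL_c(q)\times GL_d(q)$, because $\xi\neq 1$ splits $\Fq^{c+d}$ into two distinct eigenspaces of dimensions $c$ and $d$; by \eqref{eq:cc-ord} this has order $a_{(1^c)}(q)\,a_{(1^d)}(q)=|GL_c(q)|\,|GL_d(q)|$. Hence
\[
\big|\mathscr{K}_\bla(c+d)\big|=\frac{|G_{c+d}|}{|GL_c(q)|\,|GL_d(q)|}=q^{cd}\qbinom{c+d}{c},
\]
the last step being the standard identity relating the orders of general linear groups to the $q$-binomial coefficient of \eqref{eq:q-number}. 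Equivalently, one may argue geometrically: $\mathscr{K}_\bla(c+d)$ is in bijection with the ordered direct sum decompositions $\Fq^{c+d}=U\oplus W$ with $\dim U=c$, and there are $\qbinom{c+d}{c}$ choices of $U$ together with $q^{cd}$ complements $W$. The only step demanding genuine care is the verification that $b=\xi a^{-1}$ always lands in $\mathscr{K}_\bmu(c+d)$; everything else is routine bookkeeping, and I expect this proof to be markedly simpler than that of Proposition~\ref{prop:a-union}, the scalar nature of $g_0$ being what removes all difficulty.
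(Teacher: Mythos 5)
Your proof is correct and is essentially the paper's own argument: both exploit that the target class in $G_{c+d}$ is represented by the central element $\xi I_{c+d}$, so the pair count collapses to $\{(A,\xi A^{-1})\mid A\sim \diag(\xi I_c, I_d)\}$ with membership of $\xi A^{-1}$ in the second class automatic, and both then compute $|\mathscr{K}_{\bla}(c+d)|$ via the centralizer order $|GL_c(q)|\,|GL_d(q)|$ to get $q^{cd}\qbinom{c+d}{c}$. The alternative geometric count via decompositions $\Fq^{c+d}=U\oplus W$ is a pleasant extra but not a different method.
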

\begin{proof}
Let
$$
C=\text{ diag }(\underbrace{\xi,\ldots,\xi}_{c},\underbrace{1,\ldots,1}_d),\quad
D=\text{ diag }(\underbrace{\xi,\ldots,\xi}_{d},\underbrace{1,\ldots,1}_c).
$$
Set
\[
\Pi=\{(A,B)~|~ A\sim C,\quad B=\xi A^{-1}\sim D\}.
\]
Then $a^{(1^{c+d})_{t-\xi}}_{(1^c)_{t-\xi}(1^d)_{t-\xi}}=|\Pi|$.
Observe that if a matrix $A$ is conjugate to $C$
then $\xi A^{-1}$ must be conjugate to $D$, and hence
$
\Pi=\{(A,\xi A^{-1})~|~A\sim C\}.
$
This and \eqref{eq:cc-ord} give us
\[
|\Pi|=|\mathscr K_{\bla}(c+d)|=\frac{|GL_{c+d}(q)|}{|\mathcal{A}_{\bla^{\uparrow c+d}}|}
=\frac{q^{\frac{(c+d)((c+d)-1)}{2}}[c+d]!}{q^{\frac{c(c-1)}{2}}[c]!\cdot q^{\frac{d(d-1)}{2}}[d]!}
=q^{cd}\qbinom{c+d}{c},
\]
where $\bla=(1^c)_{t-\xi}$.
The proposition is proved.
\end{proof}
\subsubsection{}

For convenience, we shall denote by $\llbracket g \rrbracket$ the class sum corresponding to $g\in G_n$.

Regarding Proposition \ref{prop:a-union}, here are some examples for $a^{\bla\cup\bmu}_{\bla\bmu}$ when $\bla$
is a single $1$-cycle of degree 1 and $\bmu$ is union of $1$-cycles of degree $1$.
\begin{example}
 \label{ex:xiequal}
(1). Suppose $q=3$. Then
\begin{align*}
\small
\left\llbracket
 \begin{array}{ccccc}
 2  &  0  &  0  &  0  &  0\\
 0  &  1  &  0  &  0  &  0\\
 0  &  0  &  1  &  0  &  0\\
 0  &  0  &  0  &  1  &  0\\
 0  &  0  &  0  &  0  &  1
\end{array}
\right \rrbracket
\cdot \left\llbracket
  \begin{array}{ccccc}
 1  &  1  &  0  &  0  &  0\\
 0  &  1  &  0  &  0  &  0\\
 0  &  0  &  1  &  1  &  0\\
 0  &  0  &  0  &  1  &  0\\
 0  &  0  &  0  &  0  &  1
\end{array}
\right \rrbracket
\small
&
=17 \left\llbracket
  \begin{array}{ccccc}
 2  &  0  &  0  &  0  &  0\\
 0  &  1  &  1  &  0  &  0\\
 0  &  0  &  1  &  0  &  0\\
 0  &  0  &  0  &  1  &  1\\
 0  &  0  &  0  &  0  &  1
\end{array}
\right \rrbracket
+\text{other terms}.\\
\small
\left\llbracket
 \begin{array}{ccccc}
 1  &  1  &  0  &  0  &  0\\
 0  &  1  &  0  &  0  &  0\\
 0  &  0  &  1  &  0  &  0\\
 0  &  0  &  0  &  1  &  0\\
 0  &  0  &  0  &  0  &  1
\end{array}
\right \rrbracket
\cdot \left\llbracket
  \begin{array}{ccccc}
 1  &  1  &  0  &  0  &  0\\
 0  &  1  &  0  &  0  &  0\\
 0  &  0  &  2  &  0  &  0\\
 0  &  0  &  0  &  1  &  0\\
 0  &  0  &  0  &  0  &  1
\end{array}
\right \rrbracket
\small
&
=60 \left\llbracket
  \begin{array}{ccccc}
 2  &  0  &  0  &  0  &  0\\
 0  &  1  &  1  &  0  &  0\\
 0  &  0  &  1  &  0  &  0\\
 0  &  0  &  0  &  1  &  1\\
 0  &  0  &  0  &  0  &  1
\end{array}
\right \rrbracket
+\text{other terms}.\\
\small
\left\llbracket
 \begin{array}{cccccc}
 2  &  0  &  0  &  0  &  0  &  0\\
 0  &  1  &  0  &  0  &  0  &  0\\
 0  &  0  &  1  &  0  &  0  &  0\\
 0  &  0  &  0  &  1  &  0  &  0\\
 0  &  0  &  0  &  0  &  1  &  0\\
 0  &  0  &  0  &  0  &  0  &  1
\end{array}
\right \rrbracket
\cdot \left\llbracket
  \begin{array}{cccccc}
 2  &  0  &  0  &  0  &  0  &  0\\
 0  &  1  &  1  &  0  &  0  &  0\\
 0  &  0  &  1  &  0  &  0  &  0\\
 0  &  0  &  0  &  1  &  1  &  0\\
 0  &  0  &  0  &  0  &  1  &  0\\
 0  &  0  &  0  &  0  &  0  &  1
\end{array}
\right \rrbracket
\small
&
=204 \left\llbracket
  \begin{array}{cccccc}
 2  &  0  &  0  &  0  &  0  &  0\\
 0  &  2  &  0  &  0  &  0  &  0\\
 0  &  0  &  1  &  1  &  0  &  0\\
 0  &  0  &  0  &  1  &  0  &  0\\
 0  &  0  &  0  &  0  &  1  &  1\\
 0  &  0  &  0  &  0  &  0  &  1
\end{array}
\right \rrbracket
+\text{other terms}.
\end{align*}

(2). Suppose $q=5$. Then
\begin{align*}
\small
\left\llbracket
 \begin{array}{ccc}
 2  &  0  &  0   \\
 0  &  1  &  0  \\
 0  &  0  &  1
\end{array}
\right \rrbracket
\cdot \left\llbracket
  \begin{array}{ccc}
 3  &  0  &  0  \\
 0  &  3  &  0 \\
 0  &  0  &  1
\end{array}
\right \rrbracket
\small
&
=49 \left\llbracket
  \begin{array}{ccc}
 2  &  0  &  0  \\
 0  &  3  &  0  \\
 0  &  0  &  3
\end{array}
\right \rrbracket
+\text{other terms}.\\
\small
\left\llbracket
 \begin{array}{cccc}
 2  &  0  &  0  &  0 \\
 0  &  1  &  0  &  0\\
 0  &  0  &  1  &  0\\
 0  &  0  &  0  &  1
\end{array}
\right \rrbracket
\cdot \left\llbracket
  \begin{array}{cccc}
 3  &  0  &  0  &  0 \\
 0  &  3  &  0  &  0\\
 0  &  0  &  3  &  0\\
 0  &  0  &  0  &  1
\end{array}
\right \rrbracket
\small
&
=249 \left\llbracket
  \begin{array}{cccc}
 2  &  0  &  0  &  0 \\
 0  &  3  &  0  &  0\\
 0  &  0  &  3  &  0\\
 0  &  0  &  0  &  3
\end{array}
\right \rrbracket
+\text{other terms}.\\
\small
\left\llbracket
 \begin{array}{cccc}
 2  &  0  &  0  &  0  \\
 0  &  1  &  0  &  0  \\
 0  &  0  &  1  &  0  \\
 0  &  0  &  0  &  1
\end{array}
\right \rrbracket
\cdot \left\llbracket
  \begin{array}{cccc}
 3  &  0  &  0  &  0  \\
 0  &  3  &  0  &  0  \\
 0  &  0  &  4  &  0  \\
 0  &  0  &  0  &  1
\end{array}
\right \rrbracket
\small
&
=441 \left\llbracket
  \begin{array}{cccc}
 2  &  0  &  0  &  0  \\
 0  &  3  &  0  &  0  \\
 0  &  0  &  3  &  0  \\
 0  &  0  &  0  &  4
\end{array}
\right \rrbracket
+\text{other terms}.\\
\small
\left\llbracket
 \begin{array}{cccc}
 4  &  0  &  0  &  0  \\
 0  &  1  &  0  &  0 \\
 0  &  0  &  1  &  0  \\
 0  &  0  &  0  &  1
\end{array}
\right \rrbracket
\cdot \left\llbracket
  \begin{array}{cccc}
 4  &  0  &  0  &  0  \\
 0  &  3  &  0  &  0 \\
 0  &  0  &  3  &  0  \\
 0  &  0  &  0  &  1
\end{array}
\right \rrbracket
\small
&
=1470 \left\llbracket
  \begin{array}{cccc}
 4  &  0  &  0  &  0  \\
 0  &  4  &  0  &  0 \\
 0  &  0  &  3  &  0  \\
 0  &  0  &  0  &  3
\end{array}
\right \rrbracket
+\text{other terms}.
\end{align*}
\end{example}

%

\subsection{More examples}

Here we present examples for $a_{\bla\bmu}^\bnu$, where $\bla=(1)_{t-\xi}$ for $\xi \in \Fq\backslash \{0,1\}$, $\bmu =(1)_{f'}$ for $f'\in\Phi$ of degree $2$, and $\bnu =(1)_f$ for $f\in\Phi$ of degree $3$.

\begin{example}\label{example:q3}
Suppose $q=3$.
Then
\begin{align*}
&\left\llbracket
 \begin{array}{ccc}
 2  &  0  &  0\\
 0  &  1  &  0\\
 0  &  0  &  1
\end{array}
\right \rrbracket
\cdot \left\llbracket
 \begin{array}{ccc}
 0  &  1  &  0\\
 1  &  2  &  0\\
 0 &  0  &  1
\end{array}
\right \rrbracket \\
&=13 \left\llbracket
 \begin{array}{ccc}
 0  &  1  &  0\\
0  &  0  &  1\\
 1  &  1 &  0
\end{array}
\right \rrbracket
+13 \left\llbracket
 \begin{array}{ccc}
 0  &  1  &  0\\
0  &  0  &  1\\
 1  &  0 &  2
\end{array}
\right \rrbracket
+13 \left\llbracket
 \begin{array}{ccc}
0  &  1  &  0\\
0  &  0  &  1\\
 1  &  2 & 2
\end{array}
\right \rrbracket
+13 \left\llbracket
 \begin{array}{ccc}
 0  &  1  &  0\\
0  &  0  &  1\\
 1  &  1 &  1
\end{array}
\right \rrbracket \\
&+\text{other terms}.
\end{align*}
\end{example}

\begin{example}\label{example:q5}
Suppose $q=5$.
Then
\begin{align*}
&\tiny
\left\llbracket
 \begin{array}{ccc}
 2  &  0  &  0\\
 0  &  1  &  0\\
 0  &  0  &  1
\end{array}
\right \rrbracket
\cdot \left\llbracket
 \begin{array}{ccc}
 0  &  1  &  0\\
 3  &  1  &  0\\
 0  &  0  &  1
\end{array}
\right \rrbracket \\
&
\tiny
=31 \left\llbracket
 \begin{array}{ccc}
 0  &  1  &  0\\
0  &  0  &  1\\
 4  &  4  &  0
\end{array}
\right \rrbracket
+31 \left\llbracket
 \begin{array}{ccc}
 0  &  1  &  0\\
0  &  0  &  1\\
 4  &  3  &  0
\end{array}
\right \rrbracket
+31 \left\llbracket
 \begin{array}{ccc}
 0  &  1  &  0\\
0  &  0  &  1\\
 4  &  0  &  4
\end{array}
\right \rrbracket
+31 \left\llbracket
 \begin{array}{ccc}
 0  &  1  &  0\\
0  &  0  &  1\\
 4  &  2  &  4
\end{array}
\right \rrbracket
+31 \left\llbracket
 \begin{array}{ccc}
 0  &  1  &  0\\
0  &  0  &  1\\
 4  &  1  &  4
\end{array}
\right \rrbracket \\
&\tiny
+31 \left\llbracket
 \begin{array}{ccc}
 0  &  1  &  0\\
0  &  0  &  1\\
 4  &  0  &  3
\end{array}
\right \rrbracket
+31 \left\llbracket
 \begin{array}{ccc}
 0  &  1  &  0\\
0  &  0  &  1\\
 4  &  4  &  2
\end{array}
\right \rrbracket
+31 \left\llbracket
 \begin{array}{ccc}
 0  &  1  &  0\\
0  &  0  &  1\\
 4  &  1  &  2
\end{array}
\right \rrbracket
+31 \left\llbracket
 \begin{array}{ccc}
 0  &  1  &  0\\
0  &  0  &  1\\
 4  &  4  &  1
\end{array}
\right \rrbracket
+31 \left\llbracket
 \begin{array}{ccc}
 0  &  1  &  0\\
0  &  0  &  1\\
 4  &  2  &  1
\end{array}
\right \rrbracket \\
&+\text{other terms}.
\end{align*}
\end{example}

Note that $\det
 \begin{bmatrix}
 2  &  0  &  0\\
 0  &  1  &  0\\
 0  &  0  &  1
\end{bmatrix}
=2$,  and $
\det
 \begin{bmatrix}
 0  &  1  &  0\\
 3  &  1  &  0\\
 0  &  0  &  1
\end{bmatrix}
=-3$.
Observe that all irreducible polynomial in $\mathbb F_5[t]$ of degree $3$ with constant term equal to $-2\times (-3)=1$
appear on the right hand side of the above equation.

\section{Conjectures and discussions}
  \label{sec:Conj}

Motivated by the examples computed in the previous section, we formulate in this section several conjectures on the structure constants of the stable center, and discuss various problems arising from this work.

\subsection{Conjectures}

We present several conjectures on the structure constants $a_{\bla\bmu}^\bnu$ and the structure of the stable center $\mathbb Q\otimes_{\Z} \mathscr{G}$. Recall the $q$-integers $[m]$ from \eqref{eq:q-number}.


\begin{conjecture}
   \label{conj:str-const}
(1). Suppose $\xi_1,\xi_2,\ldots,\xi_d\in\Fq^*$ are distinct, and let $c_1,\ldots,c_d\in\N$ for $1\leq i\leq d$. Let $\bla=(1)_{t-\xi_1},\bmu=(1^{c_1})_{t-\xi_1}(1^{c_2})_{t-\xi_2}\cup\cdots\cup(1^{c_d})_{t-\xi_d}$.
Then
\[
a^{\bla\cup\bmu}_{\bla\bmu}=q^{c_1}[c_1+1]\prod^d_{i=2}(2q^{c_i}-1).
\]

(2).
Suppose $\xi\in\Fq^*$ and $f\in\Phi$.
Let $\bla=(1)_{t-\xi}$ and $\bmu=(1)_f$. Recall $q_f$ from \eqref{eq:qf}.
Then
$
a^{\bla\cup\bmu}_{\bla\bmu}=2q_f -1.
$

(3). Suppose $\xi\in\Fq^*, f_2,\ldots,f_d\in\Phi$ and $c_1,c_2,\ldots,c_d\in\N$ with $f_i\neq t-\xi$ for $2\leq i\leq d$.
Let $\bla=(1)_{t-\xi},\bmu=(1^{c_1})_{t-\xi}(1^{c_2})_{f_2}\cup\cdots\cup(1^{c_d})_{f_d}$.
Then
\[
a^{\bla\cup\bmu}_{\bla\bmu}=q^{c_1}[c_1+1]\prod^d_{i=2}(2(q_{f_i})^{c_i}-1).
\]
\end{conjecture}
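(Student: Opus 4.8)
Since part (3) specializes to part (1) when every $f_i$ is linear (so $q_{f_i}=q$) and to part (2) when $c_1=0$, $d=2$, $c_2=1$, the plan is to prove (3) and read off (1) and (2). Following the method of Propositions~\ref{prop:a-union} and \ref{prop:a-union-equal}, I would set $\bnu=\bla\cup\bmu$, $k=\|\bnu\|+\ell(\bnu^e)$, and work inside $G_k$ with a fixed representative $x_0$ of the class $\mathscr{K}_\bnu(k)$. As $\|\bnu\|=\|\bla\|+\|\bmu\|$, Lemma~\ref{lem:l=cod} applies, and the structure constant equals the number of reflections $A$ in the class $(1)_{t-\xi^{-1}}$ (the inverse of $\bla$) with $x_0A\in\mathscr{K}_\bmu(k)$. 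I would write each such $A$ as $A=I+u\phi$ for a column vector $u$ and a covector $\phi$ with $\phi u=\xi^{-1}-1$, noting that each reflection arises from exactly $q-1$ such pairs, and compute with $x_0A=x_0+(x_0u)\phi$.

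Next I would bring in the primary decomposition $\Fq^k=\bigoplus_i V^{(i)}$ of $x_0$, where $f_1=t-\xi$ with $\dim_{\Fq}V^{(1)}=c_1+1$ and $V^{(i)}\cong\F_{q_{f_i}}^{c_i}$ for $i\ge2$, and split $u=\sum_iu^{(i)}$, $\phi=\sum_i\phi^{(i)}$. Since $x_0A$ is a rank-one perturbation of $x_0$, the matrix determinant lemma gives $\det(tI-x_0A)=\det(tI-x_0)\bigl(1-\sum_iR_i(t)\bigr)$ with $R_i(t)=\phi^{(i)}(tI-x_0^{(i)})^{-1}x_0^{(i)}u^{(i)}=N_i(t)/f_i(t)$ and $\deg N_i<\deg f_i$. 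Demanding that $\det(tI-x_0A)$ equal the characteristic polynomial of a $\bmu$-representative (that of $x_0$ with one factor $t-\xi$ replaced by $t-1$) and reducing the resulting polynomial identity modulo each $f_\ell$, I would force $N_i=0$ for $i\ge2$ and $N_1=1-\xi$. The first condition says $\phi^{(i)}$ annihilates the cyclic line $\F_{q_{f_i}}u^{(i)}$; the second is exactly the normalization $\phi^{(1)}u^{(1)}=\xi^{-1}-1$, which in turn makes the global constraint $\phi u=\xi^{-1}-1$ automatic. Thus the characteristic-polynomial conditions already decouple across the components.

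I would then impose semisimplicity, since $\bmu$ has square-free minimal polynomial: every eigenvalue of $x_0A$ must have equal algebraic and geometric multiplicity. A kernel analysis of the rank-one update $x_0+(x_0u)\phi$ over each field $\F_{q_{f_i}}$ should sharpen the condition at a non-shared component $i\ge2$ to the clean alternative ``$u^{(i)}=0$ or $\phi^{(i)}=0$'' (the converse being immediate: if $\phi^{(i)}=0$ then $V^{(i)}$ is $x_0A$-invariant with the correct restriction, while if $u^{(i)}=0$ then $x_0u$ avoids $V^{(i)}$). This yields $q_{f_i}^{c_i}+(q_{f_i}^{c_i}-1)=2q_{f_i}^{c_i}-1$ admissible pairs $(u^{(i)},\phi^{(i)})$. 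For the shared component $i=1$, the other components being inert, the $\xi$-part behaves as in the pure case of Proposition~\ref{prop:a-union-equal} with $c=1$: every pair with $\phi^{(1)}u^{(1)}=\xi^{-1}-1$ is admissible and semisimplicity is automatic, so this component contributes $\#\{(u^{(1)},\phi^{(1)}):\phi^{(1)}u^{(1)}=\xi^{-1}-1\}=q^{c_1}(q^{c_1+1}-1)$ pairs. Multiplying the local counts and dividing by the $q-1$ pairs per reflection gives $\tfrac{1}{q-1}\,q^{c_1}(q^{c_1+1}-1)\prod_{i\ge2}(2q_{f_i}^{c_i}-1)=q^{c_1}[c_1+1]\prod_{i\ge2}(2q_{f_i}^{c_i}-1)$, as claimed.

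The main obstacle is the semisimplicity step. Because $x_0+(x_0u)\phi$ is not block-diagonal, the $f_i$-primary part of $x_0A$ is genuinely entangled with the others, and the real work is to prove that preserving each primary part with the correct multiplicity and semisimplicity forces the alternative ``$u^{(i)}=0$ or $\phi^{(i)}=0$'' and hence the decoupling; this needs a careful rank and kernel computation over the extension fields $\F_{q_{f_i}}$, with the cases $c_i=1$ and $\deg f_i=1$ serving as warm-ups (in the former, the characteristic-polynomial condition alone already forces $\phi^{(i)}=0$ once $u^{(i)}\neq0$). A secondary obstacle is the boundary case $\xi=1$ (or some $f_i=t-1$), where the new eigenvalue $1$ collides with an existing one and the canonical representatives acquire Jordan blocks $J_2(1)$; there I would replace the semisimple model by the triangular representatives and argue as in cases (2)--(3) of the proof of Proposition~\ref{prop:a-union}.
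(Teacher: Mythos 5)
This statement is Conjecture~\ref{conj:str-const}: the paper does not prove it, and offers only special cases (Proposition~\ref{prop:a-union} for $c_1=0$ and all $c_i=1$ with linear $f_i$, Lemma~\ref{lem:xi=eta=1} and Theorem~\ref{thm:2reflections} for reflection-by-reflection products, Proposition~\ref{prop:a-union-equal} for the pure $(t-\xi)$-component) together with the numerical data of Example~\ref{ex:xiequal}. So there is no proof in the paper to compare against, and your proposal must stand on its own as a proof attempt — which, as you yourself acknowledge, it does not yet do. Your reduction is sound and consistent with the paper's computations: the count of reflections $A=I+u\phi$ with $x_0A\in\mathscr{K}_\bmu(k)$, the $(q-1)$-fold overcounting, the matrix determinant lemma giving $1-\sum_i N_i(t)/f_i(t)=(t-1)/(t-\xi)$, and partial fractions forcing $N_1=1-\xi$, $N_i=0$ for $i\ge 2$ are all correct, and for $c_i=1$ the resulting condition ``$\phi^{(i)}$ kills $\F_{q_{f_i}}u^{(i)}$'' reproduces exactly the constraints $a_i\beta_i=0$ found in the proof of Proposition~\ref{prop:a-union}. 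Moreover, the semisimplicity dichotomy you flag as the main obstacle is genuinely closable in the generic case: if $u^{(i)}\neq 0$, then projecting $f_i(x_0A)v=0$ to $V^{(i)}$ and using that $\Fq[x_0^{(i)}]$ is a field shows $\ker f_i(x_0A)\subseteq V^{(i)}$, and the leading coefficient of the correction term is the functional $\phi$ itself, so $\ker f_i(x_0A)=V^{(i)}$ forces $\phi^{(i)}=0$; conversely, your invariant-subspace filtration has subquotients with pairwise coprime squarefree minimal polynomials, so the product is squarefree and $x_0A$ is semisimple. But as written this decisive step is announced, not proved, so the proposal is a program rather than a proof.

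The more serious gap is the boundary case, which you defer in one sentence but which lies squarely inside the conjecture's scope: $\xi=1$, or some $\xi_i=1$ in part (1), or some $f_i=t-1$ in part (3) (e.g., the very first lines of Example~\ref{ex:xiequal} and the value $q^2+q$ in Lemma~\ref{lem:xi=eta=1} are of this kind). There your entire mechanism degrades at once: $k=\|\bnu\|+\ell(\bnu^e)>\|\bnu\|$, the representative $x_0$ is no longer semisimple (it carries blocks $J_2(1)$), the target class is determined by a Jordan-type condition at the eigenvalue $1$ — a rank condition invisible to the characteristic polynomial, so the partial-fraction step no longer isolates the admissible pairs — and when $\xi=1$ the reflection is a transvection with $\phi u=0$, so even the normalization at the shared component changes. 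The paper's own evidence (cases (2)--(3) of the proof of Proposition~\ref{prop:a-union}, and the $\Gamma_5$ computation in Lemma~\ref{lem:xi=eta=1}) shows these cases require extra rank constraints and ad hoc matrix analysis even for $c_i=1$; nothing in your sketch shows that the per-component multiplicative structure survives, and for $c_i\ge 2$ no analogue has been worked out anywhere. Finally, a small but real point: your claim that (3) specializes to (2) tacitly assumes $f\neq t-\xi$ in (2); this must indeed be read into the conjecture, since for $f=t-\xi$ Proposition~\ref{prop:a-union-equal} gives $q^{cd}\qbinom{c+d}{c}$ rather than $2q_f-1$, so you should state that hypothesis explicitly before using the specialization.
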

Conjecture~\ref{conj:str-const}(1) is supported by Example~\ref{ex:xiequal}. Note that
in Example~\ref{ex:xiequal} we have $17=2q^2-1,\;60=q(1+q)(2q-1)$ and $204=q(1+q)(2q^2-1)$ with $q=3$
and $49=2q^2-1,\; 249=2q^3-1, \;441=(2q-1)(2q^2-1)$ and $1470=q(1+q)(2q^2-1)$ with $q=5$.
Conjecture~\ref{conj:str-const}(2) is supported by Lemma~\ref{lem:xi=eta=1} and Proposition~\ref{prop:a-union}.
Conjecture~\ref{conj:str-const}(3) is a combination of Conjecture~\ref{conj:str-const}(1)-(2) and
it is supported by some further examples which we omit here.

\begin{conjecture}\label{conj:(1)f}
Suppose $\bla=(1)_{t-\xi}$ for $\xi \in \Fq^*$, $\bmu =(1)_{f'}$ for $f'\in\Phi$ of degree $d-1$ with constant term  $a$.

(1). For each irreducible polynomial $f\in\Fq[t]$ of degree $d$ with constant term equal to $-\xi a$,
there exist elements $g,h\in G_d$ of modified types $\bla$ and $\bmu$, respectively, such that the modified type of $gh$
is $(1)_{f}$.

(2). Suppose $f\in\Phi$ of degree $d\ge 3$ with constant term $b$. If $b=-\xi a$, then
$
a^{(1)_f}_{\bla\bmu}=[d].
$
\end{conjecture}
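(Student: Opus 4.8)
The plan is to compute $a^{(1)_f}_{\bla\bmu}$ directly as a count of factorisations. For any fixed $w$ in the class $\mathscr{K}_{(1)_f}$, this constant equals the number of pairs $(g,h)$ with $g\in\mathscr{K}_\bla$, $h\in\mathscr{K}_\bmu$ and $gh=w$. Since $\|(1)_f\|=d=\|\bla\|+\|\bmu\|$ and $(1)_f$ has empty $(t-1)$-component, Theorem~\ref{thm:indep} lets me perform the count in $G_d$, and I would take $w=J(f)$, the companion matrix of $f$. As $h=g^{-1}w$ is determined by $g$, the constant equals the number of reflections $g\in\mathscr{K}_\bla$ for which $g^{-1}w\in\mathscr{K}_\bmu$. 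An element of modified type $\bmu$ in $G_d$ has eigenvalue $1$ exactly once, so its characteristic polynomial is $f'(t)(t-1)$; since $\deg f'=d-1\ge 2$ forces $f'\neq t-1$, this polynomial is squarefree, and therefore $g^{-1}w\in\mathscr{K}_\bmu$ is equivalent to the single requirement that $g^{-1}w$ have characteristic polynomial $f'(t)(t-1)$.

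Next I would write a reflection of type $(1)_{t-\xi}$ as $g=I+u\phi$ with $u\in\Fq^{d}\setminus 0$, $\phi\in(\Fq^{d})^{*}\setminus 0$ and $\phi(u)=\xi-1$; this yields the semisimple reflection with eigenvalue $\xi$ when $\xi\neq 1$ and a transvection when $\xi=1$, and in both cases $\det g=1+\phi(u)=\xi$. From $tI-g^{-1}w=g^{-1}\big((tI-w)+t\,u\phi\big)$ and the matrix determinant lemma, the characteristic polynomial of $h=g^{-1}w$ is $\xi^{-1}\big(f(t)+t\,p(t)\big)$, where $p(t):=\phi\,\mathrm{adj}(tI-w)u$ has degree $\le d-1$. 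Setting this equal to $f'(t)(t-1)$ and dividing by $t$ reduces everything to the single polynomial identity $p(t)=P_0(t)$ with $P_0(t):=\tfrac1t\big(\xi f'(t)(t-1)-f(t)\big)$; that $P_0$ is a genuine polynomial is exactly the hypothesis $f(0)=b=-\xi a$.

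The decisive step is to linearise $p=P_0$. As $f$ is irreducible, $R:=\Fq[t]/(f)\cong\FF_{q^d}$ is a field and $\Fq^{d}\cong R$ as $\Fq[w]$-modules, $w$ acting as multiplication by a root $\theta$ of $f$. Under the nondegenerate trace form I would identify $\phi$ with $x\mapsto\mathrm{Tr}_{R/\Fq}(\mu x)$ for a unique $\mu\in R$ and $u$ with an element $\beta\in R$; using $\mathrm{adj}(tI-w)u\leftrightarrow\tfrac{f(t)}{t-\theta}\beta$ in $R[t]$, one gets $p(t)=\mathrm{Tr}_{R/\Fq}\!\big(\mu\beta\cdot\tfrac{f(t)}{t-\theta}\big)$, which depends on $(u,\phi)$ only through $\gamma:=\mu\beta\in R$. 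Expanding $\tfrac{f(t)}{t-\theta}=\sum_{k=0}^{d-1}e_k(\theta)\,t^{\,d-1-k}$ with $e_0=1$ and $e_k(\theta)=\theta^{k}+(\text{lower powers})$, the identity becomes the linear system $\mathrm{Tr}(\gamma\,e_k(\theta))=P_{0,k}$ for $0\le k\le d-1$, where $P_{0,k}$ is the coefficient of $t^{\,d-1-k}$ in $P_0$. Since $\{e_k(\theta)\}$ is an $\Fq$-basis of $R$ (triangular in $1,\theta,\dots,\theta^{d-1}$) and the trace form is nondegenerate, there is a unique solution $\gamma_0\in R$; it is nonzero (otherwise $P_0=0$ and $f=\xi f'(t)(t-1)$ would be reducible), and its $k=0$ equation $\mathrm{Tr}(\gamma_0)=\xi-1$ recovers the reflection constraint for free.

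It then remains to count the reflections $g$ with $\mu\beta=\gamma_0$. A reflection corresponds to a pair $(\beta,\mu)\in(R^{*})^{2}$ modulo the scaling $(\beta,\mu)\sim(c\beta,c^{-1}\mu)$, $c\in\Fq^{*}$; fixing the nonzero product $\gamma_0$ in the field $R$ leaves exactly the solutions $\{(\beta,\gamma_0\beta^{-1}):\beta\in R^{*}\}$, on which the scaling acts freely with orbits of size $q-1$. Hence the number of such reflections is $|R^{*}|/|\Fq^{*}|=(q^{d}-1)/(q-1)=[d]$, and since each gives an $h=g^{-1}w\in\mathscr{K}_\bmu$ we obtain $a^{(1)_f}_{\bla\bmu}=[d]$; positivity of this count simultaneously proves part~(1). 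I expect the main obstacle to be the linearisation in the third step: recognising that the apparently bilinear characteristic-polynomial condition collapses to a condition on the single field element $\gamma=\mu\beta$, and verifying that the resulting trace-form system is nondegenerate with a unique nonzero solution automatically compatible with $\phi(u)=\xi-1$. The subsidiary technical points are the identity $\mathrm{adj}(tI-w)u\leftrightarrow\tfrac{f(t)}{t-\theta}\beta$ in $R[t]$ and the appeal to squarefreeness of $f'(t)(t-1)$ to replace the class membership $h\in\mathscr{K}_\bmu$ by its characteristic polynomial.
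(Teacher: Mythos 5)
There is no proof in the paper to compare against: the statement is Conjecture~\ref{conj:(1)f}, which the paper supports only by the degree-$2$ case (Theorem~\ref{thm:2reflections}, obtained by brute-force matrix counts) and by the degree-$3$ numerical Examples~\ref{example:q3} and~\ref{example:q5}. Your argument is therefore not a reproof but a genuine proof of the conjecture, and after checking the steps I believe it is correct. The reduction to $G_d$ with $w=J(f)$ is licensed by Theorem~\ref{thm:indep}, since $\|(1)_f\|=\|\bla\|+\|\bmu\|$ and $\ell((1)_f^e)=0$ so $n=d$ is admissible. Replacing $h\in\mathscr K_\bmu(d)$ by the condition that $h$ have characteristic polynomial $f'(t)(t-1)$ is valid for $d\ge 3$: that polynomial is squarefree, so it forces $V_h\cong \Fq[t]/(f')\oplus\Fq[t]/(t-1)$, i.e.\ type $\bmu^{\uparrow d}$. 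The determinant-lemma identity $\det(tI-g^{-1}w)=\xi^{-1}\big(f(t)+t\,\phi\,\mathrm{adj}(tI-w)u\big)$ is correct; the identification $\mathrm{adj}(tI-w)u\leftrightarrow \frac{f(t)-f(\theta)}{t-\theta}\beta$ follows from Cayley--Hamilton; the coefficients $e_k(\theta)$ are monic of degree $k$ in $\theta$, hence a basis, so nondegeneracy of the trace form yields a unique $\gamma_0=\mu\beta$, nonzero because $f\neq \xi f'(t)(t-1)$ by irreducibility; and the orbit count $(q^d-1)/(q-1)=[d]$ is right because rank-one factorisations $M=u\phi$ are unique up to the free $\Fq^*$-scaling. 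I also spot-checked the mechanism: for $q=3$, $\xi=2$, $f=t^2+1$ it reproduces $q+1$ from Theorem~\ref{thm:2reflections}, and it matches $13=[3]_3$ and $31=[3]_5$ in Examples~\ref{example:q3} and~\ref{example:q5}.

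Two small points to make explicit when writing this up. First, you should state that $\mathscr K_\bla(d)$ is \emph{exactly} $\{I+u\phi \mid u,\phi\neq 0,\ \phi(u)=\xi-1\}$: for $\xi\neq 1$ the condition $\phi(u)=\xi-1\neq 0$ gives $u\notin\ker\phi$, hence diagonalizability with eigenvalues $\xi,1^{d-1}$, while for $\xi=1$ a nonzero rank-one $M$ with $\mathrm{tr}\,M=0$ satisfies $M^2=0$, giving a transvection; this, together with your observation that the top coefficient of the system forces $\phi(u)=\xi-1$ (and $\gamma_0\neq 0$ forces $M\neq 0$ when $\xi=1$), is what makes the bijection between valid $g$'s and the solution set $\{\gamma=\gamma_0\}$ exact. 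Second, part~(1) of the conjecture as stated also includes $d=2$ with $f'=t-1$, where your squarefreeness step fails as written; that case is already covered by Theorem~\ref{thm:2reflections}, and in fact your count survives there too, since the class is cut out by characteristic polynomial $(t-1)^2$ together with $h\neq I$, and $h=I$ cannot occur because $J(f)-I$ is invertible for $f$ irreducible of degree~$2$. As a bonus, your formula confirms the positivity pattern of Conjecture~\ref{conj:intpoly} in this family, since $[d]=\sum_{k\geq 0}\binom{d}{k+1}(q-1)^k$.
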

Conjecture \ref{conj:(1)f}(1) holds when $\|\bmu\|=1$ by Theorem~\ref{thm:2reflections}.
Conjecture~\ref{conj:(1)f}(2) is supported by Theorem~\ref{thm:2reflections}, where $\deg f=2$, and it is also supported by Examples~ \ref{example:q3} and \ref{example:q5}.
Note in Example~ \ref{example:q3} we have $13=q^2+q+1$ with $q=3$, while in Example~ \ref{example:q5} we have $31=q^2+q+1$ with $q=5$. 

%

It will be interesting to understand how the structure constants $a^{\bnu}_{\bla\bmu}$ depend on $q$ as $q$ varies. To that end, write $\Phi_q=\Phi$ below to indicate its dependence on $q$. For each $\bla\in\mathcal{P}(\Phi_q)$, we set
$$
\Phi_q(\bla)=\{f\in\Phi_q~|~ \bla(f)\neq\emptyset\},
$$
and call it the support of $\bla$. Denote by $\Phi_\mathbb{Z}$ the set of monic irreducible polynomials in $\mathbb{Z}[t]$ other than $t$. The support $\Phi_{\mathbb{Z}}(\bla)$ is defined similarly for each $\bla\in\mathcal{P}(\Phi_{\mathbb{Z}})$. We shall regard a polynomial in $\Z[t]$ as a polynomial in $\Fq[t]$ by reduction modulo $q$. Observe that for each $f(t)\in\Phi_{\mathbb{Z}}$ we have $f(t)\in\Phi_q$ for $q$ any power of a large enough prime. Thus any $\bla\in \mathcal{P}(\Phi_{\mathbb{Z}})$ with  its support $\Phi_{\mathbb{Z}}(\bla)\subset\Phi_q$ can be viewed an element in $\mathcal{P}(\Phi_q)$.  Our next conjecture concerns about a generic version of the structure constants $a^\bnu_{\bla\bmu}$ of $\GG(q)$ as $q$ varies.

\begin{conjecture}\label{conj:intpoly}
Suppose $\bla,\bmu,\bnu\in\mathcal{P}(\Phi_\mathbb{Z})$.
There exists an integer polynomial in one variable $\LL$, $A_{\bla\bmu}^{\bnu}(\LL)\in\Z[\LL]$, such that
$a^\bnu_{\bla\bmu}=A_{\bla\bmu}^{\bnu}(q)$, for each prime power $q$ with
$\Phi_{\mathbb{Z}}(\bla),\Phi_{\mathbb{Z}}(\bmu),\Phi_{\mathbb{Z}}(\bnu)\subset\Phi_q$.
Moreover define $\texttt A^\bnu_{\bla\bmu}\in\Z[\LL]$ such that $\texttt A^\bnu_{\bla\bmu}(\LL)=A^\bnu_{\bla\bmu}(\LL+1)$, i.e., $a^\bnu_{\bla\bmu}=\texttt A_{\bla\bmu}^{\bnu}(q-1)$ for all prime powers $q$ as above. Then the following positivity holds: $\texttt A^\bnu_{\bla\bmu}\in\N[\LL]$.
\end{conjecture}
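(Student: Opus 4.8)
The plan is to realize the stable structure constant as the number of $\Fq$-points of an explicit variety over $\Z$, and then to establish both the polynomiality and the positivity at one stroke by exhibiting a decomposition of that variety into affine--torus cells. Set $k=\|\bnu\|+\ell(\bnu^e)$; since $\|\bnu\|=\|\bla\|+\|\bmu\|$ and the supports lie in $\Phi_{\Z}$, the matrix $J_{\obnu}\in G_k$ is defined over $\Z$ (its companion blocks have integer entries). Evaluating the class-algebra structure constant at $n=k$, which is legitimate by Theorem~\ref{thm:indep}, one has
$$
a^\bnu_{\bla\bmu}=\#\big\{g\in \mathscr{K}_\bla(k)\;\big|\; g^{-1}J_{\obnu}\in\mathscr{K}_\bmu(k)\big\}.
$$
First I would package the right-hand side as $|X_{\bla\bmu\bnu}(\Fq)|$, where $X_{\bla\bmu\bnu}\subseteq GL_k$ is the locally closed subscheme cut out by the two conjugacy-type conditions ``$g$ has modified type $\bla$'' and ``$g^{-1}J_{\obnu}$ has modified type $\bmu$''. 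For the finitely many $q$ at which a support polynomial of $\bla,\bmu,\bnu$ fails to remain irreducible the conjecture asserts nothing, so one works over $\Z[1/N]$ for a suitable $N$ and only with those $q$ for which $\Phi_{\Z}(\bla),\Phi_{\Z}(\bmu),\Phi_{\Z}(\bnu)\subset\Phi_q$; for such $q$ the type loci, being cut out by rank conditions on the matrices $f(g)^{j}$ and $f(g^{-1}J_{\obnu})^{j}$ for the relevant $f\in\Phi_q$, reduce correctly modulo $q$.

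The core of the argument is the geometric claim that $X_{\bla\bmu\bnu}$ admits a finite decomposition into locally closed pieces, each isomorphic to a product $\mathbb{A}^{b}\times\mathbb{G}_m^{a}$ of an affine space and a torus. Granting this, a piece contributes $q^{b}(q-1)^{a}$ to $A^\bnu_{\bla\bmu}(q)$ and hence $(\LL+1)^{b}\LL^{a}$ to $\texttt A^\bnu_{\bla\bmu}(\LL)$; summing over the pieces gives $A^\bnu_{\bla\bmu}\in\Z[\LL]$ together with the positivity $\texttt A^\bnu_{\bla\bmu}\in\N[\LL]$, since $\N[\LL]$ is closed under the operations involved. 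This mechanism is exactly what governs the computations of Section~\ref{sec:computation}: in the proof of Proposition~\ref{prop:a-union} the counted set is parametrized by free scalars $a_i,\beta_i$ subject only to the product relations $a_i\beta_i=0$, and the locus $\{a\beta=0\}$ stratifies as $\{a=\beta=0\}\sqcup\{a=0,\beta\neq0\}\sqcup\{a\neq0,\beta=0\}$, i.e.\ one point and two copies of $\mathbb{G}_m$, yielding $1+2\LL=2q-1$; the product over $i$ then reproduces $(2q-1)^{d-1}\in\N[\LL]$. The strategy is to show that such an explicit ``rank-normal form with product-type constraints'' persists for arbitrary $\bla,\bmu,\bnu$.

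To construct the stratification in general I would induct on $\|\bla\|+\|\bmu\|$, peeling off one reflection at a time by writing $\bla=\bla'\cup(1)_f$, and use the block description of centralizers in Proposition~\ref{prop:A-pbmu} together with the invertibility criterion of Corollary~\ref{cor:invertible} to reduce the defining rank conditions, at each stage, to a triangular system in which a block of matrix entries is either forced to vanish or is free subject only to a single vanishing/nonvanishing alternative of the type $a\beta=0$. A softer but weaker route yielding the polynomiality half alone is character-theoretic: by the Frobenius class-algebra formula, $a^\bnu_{\bla\bmu}$ equals $|\mathscr{K}_\bla(k)|\,|\mathscr{K}_\bmu(k)|/|G_k|$ times $\sum_{\chi}\chi(g_\bla)\chi(g_\bmu)\overline{\chi(g_\bnu)}/\chi(1)$ over $\chi\in\mathrm{Irr}(G_k)$, and Green's theory renders every character degree and value a universal polynomial in $q$ on the irreducible-support locus, so the whole expression is a rational function of $q$ that is integer-valued for infinitely many $q$, hence a polynomial.

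The main obstacle is the geometric claim itself. Each of the two conditions ``$g$ of modified type $\bla$'' and ``$g^{-1}J_{\obnu}$ of modified type $\bmu$'' defines a union of rank strata, and their intersection inside $GL_k$ is in general neither smooth nor irreducible, so it is far from automatic that it pave into affine--torus cells; producing such a paving (equivalently, a cellular description of the product relations defining $X_{\bla\bmu\bnu}$) is where the genuine work lies, and it is precisely the step on which the positivity---and not merely the polynomiality---depends. The character-theoretic route, by contrast, seems to offer no purchase on the $\N[\LL]$-positivity, owing to the sign-indefinite cancellation among the summands $\chi(g_\bla)\chi(g_\bmu)\overline{\chi(g_\bnu)}/\chi(1)$.
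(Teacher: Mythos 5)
There is nothing to compare your proposal against: the statement you were given is Conjecture~\ref{conj:intpoly}, which the paper explicitly leaves open and supports only by the computations of Section~\ref{sec:computation} (Theorem~\ref{thm:2reflections}, Propositions~\ref{prop:a-union} and \ref{prop:a-union-equal}, and the examples). Your text is likewise not a proof, and you say so yourself: the entire content of the conjecture---in particular the positivity $\texttt{A}^\bnu_{\bla\bmu}\in\N[\LL]$---is concentrated in your ``geometric claim'' that the locus $X_{\bla\bmu\bnu}=\{g\in\mathscr{K}_\bla(k)\mid g^{-1}J_{\bnu^{\uparrow k}}\in\mathscr{K}_\bmu(k)\}$ paves into pieces $\mathbb{A}^{b}\times\mathbb{G}_m^{a}$, and no mechanism is offered for producing such a paving beyond the observation that it happens in the worked examples (where the constraints degenerate to relations $a_i\beta_i=0$). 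The counting identity $a^\bnu_{\bla\bmu}=|X_{\bla\bmu\bnu}(\Fq)|$ at $n=k$ is correct and legitimately uses Theorem~\ref{thm:indep}, but the induction ``peel off one reflection $\bla=\bla'\cup(1)_f$'' is only a slogan: the intersection of the two modified-type strata is neither irreducible nor equidimensional in general, and nothing in Proposition~\ref{prop:A-pbmu} or Corollary~\ref{cor:invertible} triangularizes the resulting rank conditions. So the proposal restates the conjecture in geometric language rather than proving it.

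Two subsidiary steps would also fail as written. First, the claim that only \emph{finitely many} $q$ are excluded, so that one may work over $\Z[1/N]$, is false: for a fixed $f\in\Phi_{\mathbb{Z}}$ the set of primes modulo which $f$ becomes reducible has positive density by Chebotarev (e.g.\ $t^2+1$ is reducible mod every $p\equiv 1 \pmod 4$, and $t^4+1$ is reducible mod \emph{every} prime, so the admissible set of $q$ can even be empty). Consequently a $\Z$-scheme point count cannot be matched to $a^\bnu_{\bla\bmu}$ outside the admissible locus, and the admissibility constraint must be built into the geometry, not inverted away. Second, the character-theoretic fallback is not ``soft'': the Frobenius formula sums over all of $\mathrm{Irr}(G_k)$, whose cardinality grows polynomially in $q$ and whose parametrization (via $\Phi_q$ and characters of extensions of $\Fq^\ast$) itself varies with $q$, so one does not get a fixed finite sum of universal polynomials; taming exactly this dependence is the delicate point in M\'eliot's polynomiality argument that the paper flags in Remark~\ref{rem:Meliot}, and your ``integer-valued rational function, hence polynomial'' conclusion presupposes it.
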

Conjecture~\ref{conj:intpoly} is supported by all examples computed in Section~\ref{sec:computation}.

We formulate the following conjecture on the stable center. Let $\mathbb Q$ denote the field of rational numbers.
\begin{conjecture}
 \label{conj:generators}
The stable center $\mathbb Q\otimes_{\Z} \mathscr{G}(q)$ is the polynomial algebra generated by the single cycle class sums $K_{(r)_f}$, for all $r\ge 1$ and $f\in \Phi$.
\end{conjecture}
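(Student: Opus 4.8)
The plan is to exploit the fact that the monomials in the single cycle classes are indexed by the very same set as the class sum basis. A monomial $\prod_{r\ge 1,\,f\in\Phi}K_{(r)_f}^{m_{r,f}}$ corresponds to the unique $\bla\in\mP(\Phi)$ with $m_r(\bla(f))=m_{r,f}$, and its degree $\sum_{r,f}m_{r,f}\,r\,d(f)$ equals $\|\bla\|$. Since $\mathscr{G}(q)$ is commutative (inherited from the commutativity of the centers $\ZZ_n(q)$ via Theorem~\ref{thm:stable}) and graded with $\dim_{\mathbb Q}\big(\mathbb Q\otimes_{\Z}\mathscr{G}(q)\big)_n=|\mP_n(\Phi)|$ finite, the degree-$n$ monomials number exactly $\dim_{\mathbb Q}\big(\mathbb Q\otimes_{\Z}\mathscr{G}(q)\big)_n$. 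Hence, in each degree, the statements ``the single cycles generate'', ``the single cycles are algebraically independent'', and ``the transition matrix $T_n$ expressing the monomials in the basis $\{K_\bnu\}_{\|\bnu\|=n}$ is invertible over $\mathbb Q$'' are \emph{equivalent}, and any one of them, proved for all $n$, yields the conjecture. I would thus reduce the problem to the single assertion that $\det T_n\neq 0$ for every $n$.

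A first useful reduction is the determinant grading: since $\det(gh)=\det g\cdot\det h$, the character $\chi(\bla):=\det J_{\obla}\in\Fq^*$ is multiplicative and $a^\bnu_{\bla\bmu}=0$ unless $\chi(\bnu)=\chi(\bla)\chi(\bmu)$, so $T_n$ block-decomposes along $\Fq^*$. The case $n=2$ is already instructive and is settled by Theorem~\ref{thm:2reflections}: the single cycles $K_{(2)_{t-\xi'}}$ and $K_{(1)_f}$ (with $\deg f=2$) are themselves basis elements, while for $\xi\neq\eta$ one has $K_{(1)_{t-\xi}}K_{(1)_{t-\eta}}=(2q-1)K_{(1)_{t-\xi}\cup(1)_{t-\eta}}+(q-1)\sum K_{(1)_{t-\xi'}\cup(1)_{t-\eta'}}+(\text{single-constituent terms})$, the sum being over the other unordered pairs with $\xi'\eta'=\xi\eta$. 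After subtracting the standalone single-cycle rows, the union block takes the shape $qI+(q-1)J$, whose eigenvalues $q$ and $q+(q-1)N$ are nonzero, so $T_2$ is invertible. Propositions~\ref{prop:a-union} and~\ref{prop:a-union-equal} supply the further positive entries $(2q-1)^{d-1}$ and $q^{cd}\qbinom{c+d}{c}$ that one would feed into the higher-degree analogue.

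The main obstacle is that, in contrast to the symmetric group \cite{FH59} and wreath product \cite{W04} settings, the number of cyclic constituents is \emph{not} monotone under multiplication, so $T_n$ is genuinely non-triangular and the leading-term mechanism of \emph{loc. cit.} cannot be imported verbatim. Indeed, the normal form of Proposition~\ref{prop:length-eq} together with Lemma~\ref{lem:l=cod} permits a single long Jordan block to split into several shorter ones in the top-length regime: for $q\gg 0$ there is a triple $(g,h,gh)$ with $g$ of modified type $(2)_{t-1}$, $h$ a reflection, and $gh$ regular semisimple with three distinct eigenvalues, so $K_{(2)_{t-1}}K_{(1)_{t-\mu}}$ carries a nonzero coefficient on a class with \emph{more} constituents than the union. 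Thus the union need not be extremal for any naive constituent-count filtration, and a more robust argument is required.

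I therefore expect the viable route to be representation-theoretic: to construct a stable analogue of the characteristic map of \cite[Ch.~IV]{Ma95} identifying $\mathbb Q\otimes_{\Z}\mathscr{G}(q)$ with a tensor product $\bigotimes_{f\in\Phi}\Lambda_{\mathbb Q}$ of rings of symmetric functions (one factor per irreducible polynomial), under which $K_{(r)_f}$ corresponds to a power-sum-type generator of the $f$-th factor; algebraic independence, and hence the conjecture, is then immediate on the symmetric function side. The crux is to show that the central characters of the $\ZZ_n(q)$ stabilize to algebra homomorphisms on $\mathscr{G}(q)$ separating the single cycles, equivalently that the resulting Jacobian is nonsingular; this is exactly where the splitting phenomenon above must be absorbed. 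The conjectural polynomiality of the structure constants in $q$ (Conjecture~\ref{conj:intpoly}) is a natural companion input, as it would reduce the nonvanishing of $\det T_n$ for all prime powers to a generic large-$q$ statement in which the positive diagonal multiplicities of Propositions~\ref{prop:a-union}--\ref{prop:a-union-equal} dominate.
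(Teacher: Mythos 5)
This statement is Conjecture~\ref{conj:generators} of the paper: it is posed as an \emph{open conjecture}, and the paper contains no proof of it at all --- only the supporting computations of Section~\ref{sec:computation} and the analogy with the symmetric group and wreath product cases \cite{FH59, W04}, together with the authors' closing remark asking for a symmetric-function interpretation of the basis $\{K_\bla\}$. So there is no argument in the paper to compare yours against, and the only question is whether your proposal closes the conjecture on its own. It does not. Your opening reduction is sound and worth keeping: monomials of degree $n$ in the $K_{(r)_f}$ are indexed by $\mP_n(\Phi)$, which is exactly the rank of the degree-$n$ component of $\mathscr{G}(q)$, so generation, algebraic independence, and invertibility of the transition matrix $T_n$ are equivalent, and the determinant character does split $T_n$ into blocks over $\Fq^*$. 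Your degree-$2$ verification via Theorem~\ref{thm:2reflections} is essentially correct, with one small repair: the repeated-eigenvalue columns $(1^2)_{t-\xi'}$ must be peeled off first (each is hit only by the single row $K_{(1)_{t-\xi'}}^2$, with coefficient $q^2+q\neq 0$), after which the distinct-pair block is $qI+(q-1)J$ as you claim.

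The genuine gap is everything in degree $\geq 3$. You correctly diagnose why the Farahat--Higman leading-term mechanism of \cite{FH59, W04} cannot be imported (constituent count is not monotone; your splitting example of a unipotent times a reflection producing a regular semisimple class is plausible, though you do not verify it), but at precisely that point the proposal stops being a proof and becomes a program: the ``stable characteristic map'' to $\bigotimes_{f\in\Phi}\Lambda_{\mathbb Q}$ is not constructed, the stabilization of central characters is asserted as an expectation, and no nonvanishing of $\det T_n$ for $n\geq 3$ is established. The fallback route is also flawed on two counts. First, it conditions one open conjecture of the paper on another (Conjecture~\ref{conj:intpoly}), which cannot yield an unconditional proof. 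Second, even granting polynomiality in $q$, the claimed large-$q$ mechanism --- that the positive diagonal entries of Propositions~\ref{prop:a-union} and~\ref{prop:a-union-equal} ``dominate'' --- fails already in your own degree-$2$ case: there a row of the block $qI+(q-1)J$ has about $q$ off-diagonal entries of size $q-1$, so the off-diagonal row sum grows like $q^2$ while the diagonal is $2q-1$; invertibility there comes from the exact rank-one-perturbation structure, not from diagonal dominance, and nothing in the proposal produces an analogous exact structure in higher degree. In short, your reformulation and the $n=2$ case are fine, but the core of Conjecture~\ref{conj:generators} remains unproved in your proposal, just as it does in the paper.
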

In the setting of symmetric groups \cite{FH59} and wreath products \cite{W04}, the stable center (after a base change from $\Z$ to $\mathbb Q$) is known to be the polynomial algebras generated by the single cycles. As $\mathbb Q\otimes_{\Z} \mathscr{G}(q)$ has the size of the ring of symmetric functions indexed by $\Phi$, we can ask for a symmetric function interpretation of the class sum basis $\{K_\bla\}$. (This was achieved for the stable center of the symmetric groups in \cite{Ma95}.)

\subsection{Further directions}
  \label{subsec:discussions}

There are several further directions and problems arising from our work which may be worth pursuing.

\begin{enumerate}
\item
It is challenging but important to compute more examples of the structure constants $a_{\bla\bmu}^\bnu$, in particular when $\bla$ is a general single cycles, i.e., $\bla =(r)_f$ for all $r,f$, and $\bnu=\bla\cup\bmu$. It is likely that Conjecture~\ref{conj:generators} would follow from such detailed information.
\item
We ask for similar stability phenomena for other infinite families of finite groups of Lie type, such as unitary, symplectic, or orthogonal groups.
\item
We ask for similar stability phenomena for various families of subgroups of $\GL$, including the affine groups.
\item
The associated graded of the center of the complex group algebra of wreath product $\Gamma\wr S_n$ for a subgroup $\Gamma$ of $SL_2(\C)$ is isomorphic to the cohomology ring of Hilbert scheme of $n$ points on the minimal resolution $\widetilde{\C^2/\Gamma}$; see \cite{W04} (also cf. \cite{LS01, Va01, LQW04}). It can also be regarded as the Chen-Ruan orbifold cohomology ring of the orbifold $\C^{2n}\big/\Gamma\wr S_n$. Does the graded algebra $\mathscr{G}_n(q)$ in this paper afford similar geometric interpretation and generalization?

\item
Let $\bla,\bmu,\bnu,\boldsymbol{\tilde\la}, \boldsymbol{\tilde\mu}, \boldsymbol{\tilde\nu} \in \mP(\Phi)$ with $\|\bnu\|=\|\bla\|+\|\bmu\|$.
Assume that there exists a degree preserving bijection $\Phi(\bla)\cup\Phi(\bmu)\cup \Phi(\bnu)\leftrightarrow \Phi(\boldsymbol{\tilde\la})\cup\Phi(\boldsymbol{\tilde\mu})\cup \Phi(\boldsymbol{\tilde\nu})$, $f\mapsto \tilde f$ ($\deg \tilde f =\deg f$), such that
$\bla(f)= \boldsymbol{\tilde\la}(\tilde f), \bmu(f)= \boldsymbol{\tilde\mu}(\tilde f), \bnu(f)= \boldsymbol{\tilde\nu}(\tilde f)$, for all $f$, $\tilde f$.
Then from all the examples we have computed, the structure constants $a^{\bnu}_{\bla\bmu}$ only depend on the configurations of $\bla, \bmu, \bnu$ in the sense that $a^{\bnu}_{\bla\bmu} =a^{\boldsymbol{\tilde\nu}}_{\boldsymbol{\tilde\la}\boldsymbol{\tilde\mu}}$. We ask if this remarkable phenomenon holds in general.

\item
Regarding the structure constants $a_{\bla\bmu}^\bnu(n)$ in \eqref{eq:a(n)} for the center $\ZZ_n(q)$ with $ \bla,\bmu,\bnu\in\mathcal{P}(\Phi_q)$,
\cite[Theorem 3.7]{M14} states that there exist polynomials $p^{\bnu}_{\bla\bmu}(x)$ with rational coefficients  such that $a^{\bnu}_{\bla\bmu}(n)=p^\bnu_{\bla\bmu}(q^n)$; see however Remark \ref{rem:Meliot}. 
(The parametrization {\em loc. cit.} used $\obmuk$ as in \eqref{eq:obmu}, and we can replace them by the modified type $\bmu$ etc. here.) Thanks to $q^n=(q-1)[n]_q+1$,  
this can be reformulated as that there exist polynomials $\mathfrak p^{\bnu}_{\bla\bmu}(x)$ with rational coefficients  such that $a^{\bnu}_{\bla\bmu}(n)=\mathfrak p^\bnu_{\bla\bmu}([n]_q)$. We ask if the following integrality holds: $\mathfrak p^{\bnu}_{\bla\bmu}(x)\in \mathbb Z[x]$ for any $\bla, \bmu, \bnu$.

In light of Conjecture~\ref{conj:intpoly}, we ask whether there exist polynomials $\phi^{\bnu}_{\bla\bmu}(\LL, x)\in\mathbb Q(\LL)[x]$, for $\bla,\bmu,\bnu\in\mathcal{P}(\Phi_{\mathbb Z})$, such that
$a^{\bnu}_{\bla\bmu}(n)=\phi^{\bnu}_{\bla\bmu}(q,[n]_q)$ for 
any prime power $q$ with $\Phi_{\mathbb{Z}}(\bla),\Phi_{\mathbb{Z}}(\bmu),\Phi_{\mathbb{Z}}(\bnu)\subset\Phi_q$.
Furthermore, we ask if the polynomials $\phi^{\bnu}_{\bla\bmu}(\LL, x)$ are $\mathbb Z[\LL]$-linear combination of $\ds\frac{x(x-[1]_\LL)\cdots(x-[k-1]_\LL)}{\LL^{\binom{k}{2}} [k]_\LL!} $ for $k\ge 0$; cf. \cite[Proposition~ 1.2]{HH17}. For each fixed $n$, we ask if there exist positive integer polynomials $\texttt A^\bnu_{\bla\bmu;n} (\LL)\in\N[\LL]$ such that $\texttt A^\bnu_{\bla\bmu;n}(q-1)=a^\bnu_{\bla\bmu}(n)$ for all prime powers $q$ as above. This generalizes Conjecture~\ref{conj:intpoly}.


%
\end{enumerate}


\end{document}